\documentclass[11pt]{article}
\usepackage{graphicx}
\usepackage{amsmath}
\usepackage{mathrsfs,amsfonts,bbm}
\usepackage{amsfonts}
\usepackage{amsmath,amssymb,amsfonts}
\usepackage{amssymb}
\usepackage{soul} 
\usepackage{color}
\definecolor{c}{rgb}{0.9,0.3,0.1}
\definecolor{b}{rgb}{0.1,0.3,0.9}

\usepackage{ulem}

 \allowdisplaybreaks

\newtheorem{theorem}{Theorem}[section]

\newtheorem{lemma}[theorem]{Lemma}
\newtheorem{prop}[theorem]{Proposition}
\newtheorem{definition}[theorem]{Definition}

\newtheorem{remark}[theorem]{Remark}

\newtheorem{hypothesis}[theorem]{Hypothesis}

\renewcommand{\theequation}{\arabic{section}.\arabic{equation}}

\numberwithin{equation}{section}

\def\be{{\beta}}
\def\de{{\delta}}

\def\ka{{\kappa}}

\def\si{{\sigma}}

\def\va{{\varepsilon}}

\def\<{\langle}
\def\>{\rangle}
\def\({\left(}
\def\){\right)}

\newfam\msbmfam\font\tenmsbm=msbm10\textfont
\msbmfam=\tenmsbm\font\sevenmsbm=msbm7
\scriptfont\msbmfam=\sevenmsbm\def\bb#1{{\fam\msbmfam #1}}

\def\RR{\bb R}

\newcommand{\E}{{\mathbbm{E}}}

\def\cM{{\cal M}}
 
\def\cU{{\cal U}}

\newcommand{\EE}{{\mathbb{E}}}

\newcommand{\R}{{\mathbf{R}}}

 \def\qed{\hfill $\Box$}

\def\sC{\mathcal C}
\def\sF{\mathcal F}
\def\sG{\mathcal G}
\def\sH{\mathcal H}

\def\sM{{\mathcal M}}
\def\sN{{\mathcal N}}
\def\sP{{\mathcal P}}
\def\sW{{\mathcal W}}

\def\wt{\widetilde}
\def\wh{\widehat}

\def\eps{\varepsilon}

\def\R{\mathbbm{R}}
\def\bF{\mathbbm{F}}
\def\bP{\mathbbm{P}}

\def\1{\mathbbm{1}}

\def\bP{\mathbbm P}
\def\pr{{\prime}}

\def\pf{\noindent {\bf Proof.} }

\numberwithin{equation}{section}

\begin{document}
\title{Mean-field stochastic differential equations driven by sub-diffusions and their control problem}
  \author{ Shuaiqi Zhang   \footnote{Research supported by the Humanities and Social Sciences Foundation of Ministry of Education of
China (Application of Anomalous Sub-diffusion in Insurance and Finance, Grant No. 24YJA910008)    and National Natural Science Foundation of China (Grant No. 12571519).}
\qquad \hbox{and} \qquad 
 { Zhen-Qing Chen   \footnote{Research  partially supported by a Simons Foundation fund.   } 
 } 
} \maketitle
\date
\begin{abstract}  
In this paper,  we establish the existence and uniqueness of solutions for mean-field   stochastic differential equations (MF-SDEs in short)  and  backward stochastic differential equations  (MF-BSDE) driven by  anomalous sub-diffusions $\{B_{L_t}; t\geq 0\}$ with random coefficients, respectively.  Here $B$ is a Brownian motion on $\R^d$ and $L$ is  the inverse of  a subordinator  $S$ with drift 
$\kappa >0$ that is independent of $B$.  We further study the stochastic maximum principles (SMPs) for  control problems  of the stochastic systems modelled by the   MF-SDEs   using a convex variational  method.  A linear quadratic control example is given  in the last section of this paper,  for which both the SMP and the sufficient SMP established in this paper are utilized to show explicitly that it admits a unique stochastic optimal control.
 \end{abstract}

   \medskip
 
 \noindent {\bf Keywords}: 
 Mean-field     stochastic differential equations, mean-field   backward stochastic differential equations, anomalous sub-diffusion, 
 existence and uniqueness,   stochastic maximum principle.

\medskip

\noindent{\bf AMS 2020 Subject Classification:} 
    60K50;   
60H10; 93E20

\section{Introduction}\label{S1}

Mean-field theory  is a powerful tool for  studying  collective behavior arising from  mutual interactions.
It has been applied to a wide range of fields including  physics,  statistical inference, graphical models, neuroscience, 
artificial intelligence, epidemic models, computer-network performance and game theory. 
Traditionally, most existing literature is within the framework of mean-field models driven by Brownian motion. 
 These mean-field stochastic differential equations (SDEs) can be traced back to the McKean–Vlasov model, originally introduced by Kac \cite{Kac2007} and McKean \cite{McKean1966} to investigate physical systems with large numbers of interacting particles. 
Regarding applications, Lasry and Lions \cite{Lasry2007} extended mean-field models to economics and finance by considering $N$-player stochastic differential games, proving the existence of Nash equilibria and rigorously deriving the mean-field limit equations as $N \to \infty$. A natural question then arises: how should the system be modeled when the underlying process is less active or exhibits memory effects?

Sub-diffusions are a class of random processes describing particle motion that is slower than Brownian motion, often due to particle sticking or trapping. Such phenomena have been observed in diverse real-world systems, including porous media, biological systems, and financial markets. For instance, sub-diffusion effectively models disease spread in specific populations and particle transport through soil. 
Prototypes of anomalous sub-diffusions are Brownian motions time-changed by the inverse of subordinators 
that are independent of the Brownian motions (see, e.g., \cite{Meerschaert2004, MK}),
which notably lack the Markov property. 
This inspires us to propose a mean-field framework driven by sub-diffusion to model state processes characterized by collective behavior in a slowly evolving   random 
environment and study its well-posedness. A practical example is a "bear market," where trading is less active and stock prices, influenced by collective investor sentiment, are characterized by their ensemble average. This further motivates us to explore the  optimal control for mean-field systems driven by sub-diffusion. Given the theoretical and practical significance of such systems, it is compelling to investigate 
 mean-field   SDEs and   its associated control problems.

As for the   mean-field control   in the classical Brownian motion setting,  various versions of the stochastic maximum principle (SMP) have been developed   in 
 different frameworks (see, e.g., \cite{Acciaio, Buckdahn2016, Lakhdari2021, Li2012, Wang2022, Zhang2018}). 
  To the best of the authors' knowledge, this is the first work to  investigate      MF- SDEs driven by non-Markovian anomalous sub-diffusions.
While sub-diffusions have Brownian motion as their extreme case, 
 the main difference between  SDEs and BSDEs  driven by Brownian motion and by sub-diffusions is that the latter ones 
  can be degenerate in the sense that sub-diffusions can remain constant (i.e., inactive) during infinitely many random time intervals.

\medskip

 Suppose that $S=\{S_t; t\geq 0\}$ is a subordinator with drift $\kappa > 0$ and  
L\'evy measure $\nu$; that is, $S_t=\kappa t+ S_t^0$, where $S_t^0$ is a driftless subordinator with
  L\'evy measure $\nu$.
 Let $L:= \{L_t; t\geq 0\}$ be the inverse of $S$, that is, 
 $$
 L_t = \inf\{r>0: S_r >t\} \quad \hbox{for } t\geq 0.
 $$
 The inverse subordinator $L=\{L_t; t\geq 0\}$ is continuous in $t$  but stays constant during infinitely many time periods which are resulted from the infinitely many jumps by the subordinator $S $ during the entire time interval $[0, \infty)$ when its L\'evy measure is non-trivial.  
Let $B$ be a  Brownian motion that is independent of  the subordinator $S $. 
  Its time-change process $\{B_{L_t}; t\geq 0\}$ by the inverse subordinator $L $ is a typical example of  sub-diffusions, 
which  is a continuous martingale with quadratic variation
$\langle B_{L} \rangle_t  = L_t$  but is not a Markov process. 
   Note that  $B_{L_t}$ stays flat during the time periods when $L_t$ stays constant. 
   For any $\eps >0$,  the jumps of $S_t$ of size larger than $\eps$ occurs 
   according to a Poisson process with parameter $\nu (\eps, \infty)$.  When the L\'evy measure $\nu$ of 
   the subordinator $S$ is infinite, then  during any finite time intervals, 
   $L_t$ has infinitely many small time periods  
      but  only finite many time intervals with length larger than $\eps$  during which it stays constants. 
   Thus the sub-diffusion $B_{L_t}$  matches well with the phenomena such as the financial market constantly  has small corrections 
    but long bear market occurs only sporadically.

 \medskip

   Since $B_{L_t}$ stays flat during the time periods when $L_t$ stays constant, 
  the   MF- SDEs driven by sub-diffusions exhibit  a combined deterministic and stochastic features.  
  The root of this phenomenon is best exemplified by \eqref{e:2.2} below when the subordinator $S$ has positive drift $\kappa >0$. 
  Note that when the L\'evy measure $\nu$ for the subordinator  vanishes and $\kappa =1$,  $S_t=t$ and so 
$B_{L_t}= B_t$ reduces to the standard Brownian motion. Thus 
the results in this paper  not only  recover but also extend in a ``continuous way" the corresponding results in
the classical Brownian setting.

 \medskip
 
 The main results of this paper are Theorems \ref{exis-unique-SDEs},  \ref{T:BSDE},  
   \ref{Necessaryconv} and \ref{Sufficiency}.
   The main contributions and novel aspects of this paper are as follows:
 
 \begin{enumerate}
\item[\rm (i)]  
We establish in Theorems  \ref{exis-unique-SDEs} and  \ref{T:BSDE} the existence and uniqueness 
for   MF-SDEs and  MF-BSDEs driven by anomalous sub-diffusions   with random coefficients.  
 These SDEs are run on two time scales: the Lebesgue time scale $dt$ and the stochastic inverse subordinator scale $dL_t$.  
In addition to the $dL_t$ driver term,  the driver term for $dt$ is also allowed to be dependent on $Z_t$ 
for BSDE. This is new even in the non-distribution dependent BSDEs studied recently in \cite{ZhangChen2024SICON1, ZhangChen2024JDE}.

\item[\rm (ii)] 
The model, mean-field SDEs driven by sub-diffusions,   bridges the gap between microscopic 
dynamics and macroscopic population-level descriptions 
 for  stochastic systems characterized  by   state-variable dynamics whose mean-square displacement grows sub-linearly in time.
 Such anomalous  behavior reflects non-Brownian transport arising in real-world settings through mechanisms 
 such as trapping, heterogeneous environments, or long-tailed waiting times between successive transitions.

\item[\rm (iii)] 
The control of  MF-SDEs reduces the high-dimensional challenge of coordinating a large number of agents to a manageable system involving a single macroscopic density and a centralized control law. This framework is essential for designing policies that optimize the collective behavior of agents without requiring individual tracking.   Thus it is important both in theory and in applications to establish the corresponding control theory for mean-field SDEs driven by sub-diffusions. 
This paper represent the first exploration in this direction.  
  
\item[\rm (iv)] It is shown in this paper that  control problems for the mean-filed mean-field SDEs   driven by sub-diffusions 
have  the distinct combined feature of deterministic  and stochastic characteristics; 
  see, for instance, Theorems  \ref{Necessaryconv} and \ref{Sufficiency}.  
  \end{enumerate}

   Throughout  this paper, we use notation $:=$ as a way of definition. 
  For a stochastic process $x$, we use the notation $x(t)$ and $x_t$ interchangeably, 
  to denote its state or position  at time $t$.   When there is no danger of confusion, 
  for notational simplicity, sometimes we also use $x(t)$ or $x_t$ to denote the stochastic process $x$. 
 
 \medskip
 
 Unless otherwise stated, all vectors in this paper are column vectors. We identify $n\times d$-matrices with points in 
 $\R^{n\times d}$. We use $\xi^*$ to denote the transpose of  a vector or a matrix $\xi$. 
 For two vectors $\xi, \eta \in \R^n$, $\xi \cdot \eta$, or simply $\xi \eta$, denotes their inner product. 
 For a real-valued differentiable function $f(x)$ on $\R^n$, its gradient $\nabla f(x)$ is the column vector
 $(\frac{\partial f(x)}{\partial x_1} , \cdots, \frac{\partial f(x)}{\partial x_n})^*$.  
 For an $\R^N$-valued function $b(x)$ on $\R^n$, $\nabla_x b(x)$ is the $n\times N$-matrix-valued function on $\R^n$
  so that 
 $$
  (\nabla_x b(x))  \xi = \nabla_x (b(x) \cdot \xi) \quad \hbox{for any } \xi \in \R^N.
  $$
  For an $N\times d$-matrix-valued function $\sigma (x)= (\sigma_{ij}(x)) $ on $\R^n$,  
  $\nabla_x \sigma (x):= ( \nabla_x \sigma_{ij}) (x)$ is the 
  $(n\times N) \times d$-matrix-valued function on $\R^n$, which has the property that
  $$
 ( \nabla_x \sigma (x) \xi )  \eta = \nabla_x ( (\sigma (x) \xi) \cdot \eta)
 \quad \hbox{for any } \xi \in \R^d, \eta \in \R^N.
  $$
 For two $n\times d$-matrices $A_1$ and $A_2$,  we use $A_1\cdot A_2$ or ${\rm Tr} (A_1^*A_2)$
  to denote the trace of $(A_1^* A_2)$. Observe that when $A_1$ and $A_2$ are identified with elements in $\R^{n\times d}$,
  ${\rm Tr} (A_1^*A_2)$ is the same as their inner product in $\R^{n\times d}$. 
  
 \medskip
 
 The rest of this paper is organized as follows. In Section \ref{S:2}, we recall some facts from \cite{ZhangChen2024SICON1,  ZC4} 
 about  inverse subordinators and anomalous sub-diffusions that will be used in this paper. 
 In  Section \ref{S:3} and  \ref{S:4},  the existence and uniqueness of the solution to   
  MF-SDEs and  MF-BSDE with random coefficients are established.
  Stochastic control problem is formulate in Section \ref{S:5}.
  We   study  the stochastic maximum principle for 
   systems modeled by  MF-SDEs in Section \ref{S:6},
  A sufficient condition for optimal control is obtained in Section \ref{S:7}. 
  To illustrate the main results of this paper, 
a linear quadratic control example is given  in Section \ref{S:8}, 
 for which we use both the stochastic maximum principle and the sufficient stochastic maximum principle
 to show explicitly that it admits a unique stochastic optimal control.

\section{Preliminary about sub-diffusion}\label{S:2}

In this section, we recall some results from  \cite{ZhangChen2024SICON1,ZC4}
 that will be used later in this paper.
Although the sub-diffusion itself is not a Markov process, we can make it Markov by adding an auxiliary overshoot process. 
 
 \begin{theorem}\label{T:3.1} 
  Suppose that $B$  is a  standard Brownian motion on $\RR^d$  with $d\geq 1$ starting from the origin $\bf 0$, 
   $S$ is any subordinator that is independent of $B$ with $S_0=0$,
and $L_t:=\inf\{r>0:S_r>t\}$.
Then
\begin{equation} \label{e:2.1}
\wt X_t:=(X_t, \, R_t):=
 \left(x_0+ B_{L_{(t-R_0)^+}},  \, R_0+ S_{L_{(t-R_0)^+}} - t  \right), \quad t\geq 0,
\end{equation}
with 
$\wt X_0= (x_0, R_0) \in \RR^d \times [0, \infty)$ 
  is a time-homegenous Markov process taking values in $\R^n\times [0, \infty)$.
\end{theorem}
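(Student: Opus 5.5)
The plan is to realize $\wt X$ as a time-homogeneous Markov process by embedding it in the larger Markov process $(B_r, S_r)_{r\geq 0}$, which has independent stationary increments. The key device is the family of stopping times $\tau_t := L_{(t-R_0)^+}$.

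\textbf{Step 1: the time change is a stopping time.} Let $\sG_r := \sigma(B_u, S_u; u\leq r)$. For each $s\ge 0$ we have $\{L_s \le r\} = \{S_r \ge s\}$ up to the standard right-continuity adjustment, so $L_s$ is a $\sG_{r+}$-stopping time. Hence $\tau_t$ is a stopping time for the usual augmentation of $\sG$, for each fixed starting point $(x_0,R_0)$. Moreover $\tau_t$ is nondecreasing in $t$. The state at time $t$ can then be written as
$$
\wt X_t = \bigl(x_0 + B_{\tau_t},\; R_0 + S_{\tau_t} - t\bigr),
$$
so $\wt X_t$ is $\sG_{\tau_t}$-measurable. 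The filtration I use is $\sF_t := \sG_{\tau_t}$.

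\textbf{Step 2: restart at $\tau_t$.} Fix $t$ and $s\ge0$. By the strong Markov property of the Lévy process $(B,S)$ (both components are Lévy and they are independent), the process
$$
(B'_u, S'_u) := (B_{\tau_t+u}-B_{\tau_t},\; S_{\tau_t+u}-S_{\tau_t}), \qquad u \geq 0,
$$
is independent of $\sG_{\tau_t}$ and has the same law as $(B,S)$. Put $L'$ for the inverse of $S'$. Write $X_t$, $R_t$ for the components of $\wt X_t$. Note that $R_t = R_0 + S_{\tau_t} - t \ge 0$: if $t\le R_0$ this is immediate since $\tau_t=0$, and if $t> R_0$ it holds because $S_{L_v} \ge v$ for $v=t-R_0$.

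\textbf{Step 3: identify the future.} The core computation is
$$
\tau_{t+s} = \tau_t + L'_{(s-R_t)^+}.
$$
I treat three cases.

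If $t+s \le R_0$, both sides are $0$, since then $R_t = R_0 - t \ge s$ and $\tau_t = 0$.

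If $t\le R_0 < t+s$, then $\tau_t = 0$, $S'=S$ and $R_t = R_0 - t$, and the identity is trivial.

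If $t > R_0$, set $v = t - R_0$. For $w\ge v$, the point $L_w$ is the first time $r$ at which $S_r > w$. Since $S_{L_v}\ge v$, the value $L_w$ stays equal to $L_v$ while $w < S_{L_v}$. Beyond that, $L_w = L_v + \inf\{u : S'_u > w - S_{L_v}\}$. Taking $w = v+s$ gives $w - S_{L_v} = s - R_t$, which yields the formula. The delicate point here is the strict-versus-nonstrict inequality at $w = S_{L_v}$, which is handled by right-continuity of $S$.

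It then follows that
$$
\wt X_{t+s} = \Bigl(X_t + B'_{L'_{(s-R_t)^+}},\; R_t + S'_{L'_{(s-R_t)^+}} - s\Bigr).
$$
This is exactly the map of \eqref{e:2.1} with initial point $(X_t,R_t)$, applied to the fresh driving pair $(B',S')$.

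\textbf{Step 4: conclude.} Since $(B',S')$ is independent of $\sF_t$ and $\wt X_t$ is $\sF_t$-measurable, for bounded measurable $f$ we get
$$
\E[f(\wt X_{t+s}) \mid \sF_t] = P_s f(\wt X_t), \qquad P_s f(x,r) := \E\, f\bigl(x + B_{L_{(s-r)^+}},\; r + S_{L_{(s-r)^+}} - s\bigr).
$$
This gives the Markov property with time-homogeneous transition kernel $P_s$.

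The main obstacle is the pathwise identity in Step 3, together with verifying that $\tau_t$ is a genuine stopping time so that the strong Markov property applies. I expect the measurability and right-continuity details, including the right-continuity of the filtration, to need the most care.
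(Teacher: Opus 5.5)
The paper does not prove this statement itself; it is recalled from \cite{ZhangChen2024SICON1,ZC4}. Your argument is the standard proof of this fact and is correct, except that it silently uses one hypothesis. It combines the strong Markov property of the L\'evy pair $(B,S)$ at the optional time $\tau_t=L_{(t-R_0)^+}$, the pathwise shift identity $\tau_{t+s}=\tau_t+L'_{(s-R_t)^+}$, and a freezing argument with respect to $\sG_{\tau_t}$.

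\textbf{The silent hypothesis.} Two parts of your argument require $L_0=0$ and $L'_0=0$ a.s., i.e.\ $S_r>0$ for all $r>0$:
\begin{itemize}
\item the claim $\tau_t=0$ for $t\le R_0$, used in Step 2 and in the first two cases of Step 3;
\item the case $s<R_t$ of Step 3, where the exact value $L_{v+s}=L_v$ must match $L_v+L'_0$.
\end{itemize}
This condition holds iff $\kappa>0$ or $\nu(0,\infty)=\infty$, so it holds under the paper's standing assumption $\kappa>0$. It fails for a compound Poisson subordinator. There $L_0$ is the first jump time $\zeta_1>0$, so already $\wt X_0=(x_0+B_{\zeta_1},R_0+S_{\zeta_1})\neq(x_0,R_0)$. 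Moreover, for $0<s<R_t$ one has $\tau_{t+s}=\tau_t$, while your formula gives $\tau_t+L'_0>\tau_t$, so your kernel $P_s$ would be wrong. Hence ``any subordinator'' in the statement must be read with this restriction, and your proof should state it explicitly.

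\textbf{The ``delicate point'' at $w=S_{L_v}$.} This point is not actually delicate. For every $w\ge S_{L_v}$ one has exactly $\{r>0:S_r>w\}=L_v+\{u>0:S'_u>w-S_{L_v}\}$, so $L_w=L_v+L'_{w-S_{L_v}}$ with no adjustment at all. The only place something extra is needed is $w<S_{L_v}$, and what is needed there is $L'_0=0$, not right-continuity.

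\textbf{Where right-continuity is used.} Right-continuity of $S$ is needed only for two things:
\begin{itemize}
\item $S_{L_v}\ge v$, which gives $R_t\ge 0$;
\item the optional-time property $\{L_v<r\}=\bigcup_{q\in\mathbb{Q}\cap(0,r)}\{S_q>v\}$.
\end{itemize}
The second, combined with right-continuity of the augmented filtration of a L\'evy process, is a cleaner replacement for your ``$\{L_s\le r\}=\{S_r\ge s\}$ up to adjustment''.

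The remaining ingredients are fine: monotonicity of $\tau$ making $\{\sG_{\tau_t}\}$ a filtration, joint measurability of the restart map on path space, and the freezing lemma.
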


\bigskip

Note that for a discontinuous subordinator $S$,   $\{S_{L_t}>t\}$ happens with positive probability for each fixed $t>0$.  
 On $\{S_{L_t}>t\}$, the inverse local time $L_s$ and, consequently, 
 the sub-diffusion $B_{L_s}$ remain flat during the time interval $[t, S_{L_t} ]$. 
 We call $R_t:=R_0+ S_{L_{(t-R_0)^+}} - t $ an overshoot process with initial value $R_0$. 
 It measures how much time it would take for the anomalous sub-diffusion $X_t:= x_0+ B_{L_{(t-R_0)^+}}$ to wake up   from time $t$. 
 
\medskip
 
The inverse subordinator $L_t$ is continuous in $t$. 
  Denote by $\{\sG^B_t\}$ the natural augmented filtration generated by the Brownian motion $B$, 
that is, $\sG^B_t :=   \sG_{t+}^{B, 0} \vee {\cal N}$, where $\sG^{B, 0}_t:=\sigma (B_s; s\in [0, t]\}$
 and $\sN:=\{A\in \sG^{B, 0}_\infty: \bP (A)=0\}$. Here the notation $ \sG_{t+}^{B, 0} \vee {\cal N}$ 
  stands for the  $\sigma$-field generated by $ \sG_{t+}^0 \cup  {\cal N}$. 
 The natural augmented filtration $\{\sG_t^B\}$ is right continuous. 
 Similarly, we denote by $\{\sG^L_t\}$ the natural augmented filtration generated by 
 the inverse subordinator $L$.  
 
 \medskip
 
 Fix a constants $T>0 $.   Let  $\wt \sG^B_t := \sG^B_t \vee \sG^L_{T}$  for $t\geq 0$ and 
 \begin{equation}\label{e:2.2a}
  \wt \sF_t:=\wt  \sG^B_{L_t } \quad \hbox{for } 0\leq t\leq T.
 \end{equation} 
 Since $B$ and $L$ are independent, $B$ is an $\{  \wt \sG^B_t\}_{t\geq 0}$-martingale and 
 so  $B_{L_t}$ is a square-integrable $\{\wt \sF_t\}_{0\leq t\leq T}$-martingale. 
  
  \medskip
  
  Denote by $\bF':=\{\sF'_t\}$ the natural augmented filtration generated by the sub-diffusion $B_{L_t}$.
  Clearly, $\sF'_t \subset \wt \sF_t$ for every $t\in [0, T]$.  

\medskip
 
In the rest of this paper, unless otherwise specified,  we assume the subordinator $S$ has positive drift $\kappa >0$. 
In this case,  for any $t, s>0$,
 $$
 0\leq L_{t+s} -L_t \leq s/\kappa.
 $$
 So almost surely, 
\begin{equation}\label{e:2.2}
 \frac{dL_t}{dt}    \ \hbox{ exists for a.e. } t>0  \quad \hbox{with} \quad 0\leq  \frac{dL_t}{dt}  \leq 1/\kappa   \    \hbox{ for a.e. }   t>0.
\end{equation} 
In fact, it is shown in \cite[Proposition 3.2]{ZhangChen2024SICON1} that  $\bP$-a-s.,  
 \begin{equation}\label{e:2.3}
 \frac{dL_t}{dt}   = \kappa^{-1} \1_{\{R_t=0\}} \quad \hbox{for a.e. } t>0, 
\end{equation} 
 where $R_t:=S_{L_t} - t$ is the overshoot process with $R_0=0$. 
  It follows from \eqref{e:2.3} that $\{R_t = 0\} \in \sF'_t$ for every $t\geq 0$.

\medskip

 We will need the following  integral representation of square integrable random variables with respect to  the subdiffusion  $B_{L_t}$, which is crucial for 
 the well-posedness of  MF-BSDEs.
The following result holds for any subordinator $S$; that is, we do not need to assume that $S$ has a positive drift $\kappa>0$. For simplicity, we denote the filtration $\{\wt \sF_s \}_{0\leq s\leq T}$ by $\wt \bF$. 
Note that $\wt \sF_0=\sG^L_T$.

\begin{theorem}[Theroem 2.1 in \cite{ZC4}]\label{T:2.2}
Let $n\geq 1$ be an integer. 
  For every $\R^n$-valued  $\xi \in L^2( \wt \sF_T)$,  
there exists an $\R^{n\times d}$-valued  
$\wt \bF$-progressively measurable process   $\{  H_s;  s\in [0, T]\}$   
 having  $\EE \int_0^T | \wt H_s  |^2 dL_s  <\infty$
so that  
\begin{equation}\label{e:1.1}
\xi  =  \EE \big[ \xi \big| \wt \sF_0\big]  +  \int_0^{T}    H_s    dB_{L_s }  .
\end{equation}
Such $ H$ is unique in the sense that if $\wt H'$ is another $\wt \bF$-progressively measurable process
having  $\EE \int_0^T |  H'_s  |^2 dL_s  <\infty$ so that \eqref{e:1.1} holds, then
$\EE  \int_0^T |  H_s-   H'_s|^2 dL_s =0$.
\end{theorem}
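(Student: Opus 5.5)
The plan is to reduce the representation in the filtration $\wt\bF$ to the classical martingale representation for Brownian motion, conditionally on the whole path of $L$, and then transport it through the time change $t\mapsto L_t$.

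\emph{Step 1: Brownian representation in the enlarged filtration.} Since $B$ is independent of $\sG^L_T$, $B$ is a Brownian motion with respect to $\{\wt\sG^B_t\}_{t\ge0}$, where $\wt\sG^B_t=\sG^B_t\vee\sG^L_T$. I will prove that every $\eta\in L^2(\wt\sG^B_\infty)$ can be written as
\[
\eta=\EE[\eta\,|\,\sG^L_T]+\int_0^\infty K_u\,dB_u
\]
for some $\wt\sG^B$-progressively measurable $K$ with $\EE\int_0^\infty|K_u|^2du<\infty$. The argument is the standard density one. For products $\eta=f(L)\,g(B)$, with $f$ bounded and $\sG^L_T$-measurable and $g(B)$ in $L^2(\sG^B_\infty)$, apply the classical Itô representation to $g(B)$ and take $K=f(L)\,K^g$. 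Such products span a dense subspace of $L^2(\sG^L_T\vee\sG^B_\infty)$ because the two $\sigma$-fields are independent. The isometry $\EE|\int K\,dB|^2=\EE\int|K|^2du$, together with the orthogonality of stochastic integrals to $L^2(\sG^L_T)$, lets the representation pass to $L^2$-limits. Uniqueness at this stage follows from the same isometry.

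\emph{Step 2: time change.} Let $\xi\in L^2(\wt\sF_T)$, and note $\wt\sF_T=\wt\sG^B_{L_T}$. Here $L_T$ is a $\wt\sG^B$-stopping time; in fact it is $\wt\sG^B_0$-measurable, since $\wt\sG^B_0\supset\sG^L_T$. Applying Step 1 to $\xi$ gives $K$. Because $\xi$ is $\wt\sG^B_{L_T}$-measurable, taking conditional expectation with respect to $\wt\sG^B_{L_T}$ (optional stopping of the $L^2$-martingale $\int_0^\cdot K\,dB$) yields
\[
\xi=\EE[\xi|\sG^L_T]+\int_0^{L_T}K_u\,dB_u.
\]
Note that $\EE[\xi|\wt\sF_0]=\EE[\xi|\sG^L_T]$, since $\wt\sF_0=\wt\sG^B_{L_0}=\wt\sG^B_0=\sG^L_T$ up to null sets.

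\emph{Step 3: convert to $dB_{L_s}$.} Set $H_s:=K_{L_s}$. The time-change formula for stochastic integrals of continuous martingales (e.g.\ Revuz--Yor, Prop.~V.1.5) applies because $L$ is continuous and each $L_s$ is a $\wt\sG^B$-stopping time. It gives $\int_0^{L_T}K_u\,dB_u=\int_0^TK_{L_s}\,dB_{L_s}$, together with
\[
\EE\int_0^T|H_s|^2dL_s=\EE\int_0^{L_T}|K_u|^2du<\infty.
\]
The technical point is that $L$ may be constant on intervals, so $B$ must be constant wherever $L$ is, which is automatic here. Progressive measurability of $H$ with respect to $\wt\bF$ follows from progressive measurability of $K$ composed with the continuous adapted change of time.

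\emph{Uniqueness.} If $H'$ also satisfies \eqref{e:1.1}, then $\int_0^T(H_s-H'_s)\,dB_{L_s}=0$. Since $\langle B_L\rangle=L$, the Itô isometry gives $\EE\int_0^T|H_s-H'_s|^2dL_s=0$.

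\emph{Main obstacle.} I expect the delicate step to be Step 1's density argument combined with the stopping at the random time $L_T$. One must check that $\wt\sF_T$ is really contained in $\wt\sG^B_{L_T}$, and that the augmentations and right-continuity of the enlarged filtration are compatible. The way to handle this is to perform the construction conditionally on $\sG^L_T$, treating the path of $L$ as a frozen deterministic parameter.
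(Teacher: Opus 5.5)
The paper does not prove this statement; it quotes it from \cite{ZC4}. Your argument is correct and is the natural proof: It\^o representation in the initially enlarged filtration $\wt\sG^B_t=\sG^B_t\vee\sG^L_T$, then optional stopping at $L_T$ (which is $\wt\sG^B_0$-measurable), then the Revuz--Yor time-change formula with $\wt\sF_t=\wt\sG^B_{L_t}$.

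A few points should be tidied.

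\begin{itemize}
\item Your sentence about $B$ being constant ``wherever $L$ is'' misstates the $C$-continuity hypothesis of the time-change formula. That hypothesis asks that $B$ be constant on $[L_{s-},L_s]$, i.e.\ across jumps of the time change. It is vacuous here because $L$ is continuous. The flat stretches of $L$ only make $B_L$ flat and play no role.
\item $L_s$ is only known to be a $\wt\sG^B$-stopping time for $s\le T$, since it is then $\sG^L_T$-measurable. So apply the formula to the time change $C_s:=L_{s\wedge T}$.
\item Density of the products $f(L)g(B)$ in $L^2(\sG^L_T\vee\sG^B_\infty)$ holds for any two $\sigma$-fields. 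Independence is what you actually need for two other facts: that $B$ is a $\wt\sG^B$-Brownian motion, and that $\EE[f(L)g(B)\,|\,\sG^L_T]=f(L)\EE[g(B)]$.
\item The identification $\wt\sF_0=\wt\sG^B_{L_0}=\sG^L_T$ uses $L_0=0$ together with Blumenthal's $0$--$1$ law. $L_0=0$ holds unless $S$ is a driftless compound Poisson process. In that degenerate case, replace $\EE[\xi|\sG^L_T]$ by $\EE[\xi|\wt\sG^B_{L_0}]$ and integrate from $L_0$; your argument goes through unchanged.
\item The ``main obstacle'' you anticipate does not exist. $\wt\sF_T=\wt\sG^B_{L_T}$ holds by definition, and working in the enlarged filtration already performs the conditioning on $L$. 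Freezing the path of $L$ pathwise would instead create work, namely a measurable-selection argument for the integrand, which your filtration-based argument avoids.
\end{itemize}
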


  \medskip

\section{ Existence and uniqueness of solutions for mean-field    SDEs driven by sub-diffusion}\label{S:3}
 
Recall that  $\bF'=\{\sF'_t\}$ is the natural augmented filtration generated by the sub-diffusion $\{B_{L_t}; t\geq 0\}$ on $\R^d$.
Let $n\geq 1$ be an integer, $T>0$ and $x_0\in \R^n$. 
Consider the following   MF-SDEs on $\R^n$ driven by the sub-diffusion $B_{L_t}$ for $t\in [0, T]$: 
  \begin{equation}\label{SDE}
\left\{\begin{aligned}
dX_t =&     b(\omega, t,   X_  t , \bP_{X_t}  )  dt  
+ \delta    (\omega, t,   X_  t , \bP_{X_t} ) dL_t + \sigma(\omega,  t,   X_ t , \bP_{X_t}   )   dB_{L_t} 
 \quad \hbox{for } t\in[0,T],    \\
 X_0=&\ x_0 \in \R^n , 
 \end{aligned}
\right.
 \end{equation}
 where   $\bP _{ X_t }  $  denotes the probability measure (or distribution) induced  by the $\R^n$-valued random variable  $ X_t$. 
For each fixed $x \in \R^n$ and  a probability measure $\mu $ on $\R^n$, 
 $(\omega, t) \rightarrow \varphi (\omega, t, x,  \mu) $ with  $\varphi = b, \delta$ (resp. $\varphi = \sigma $)  is an 
   $\R^n$-valued (resp. $n\times d$-matrix-valued) $\bF'$-progressively measurable random process 
   defined on $\Omega \times  [0, T]$.
 Conditions on $x$ and  $\mu$   for these processes will be imposed later in Hypothesis \ref{HP0-SDEs}.  
For notational simplicity, we will typically drop $\omega$ from the expressions  of the above random processes or variables. 

\medskip

\medskip

Denote by $ \mathcal {P}(\R^n) $  the set of probability measures on $\R^n$.
For $p\geq 1$, define the $p$-Wasserstein's distance
$\mathcal {W}_p (\mu_1,\mu_2) $ between two probability measures $\mu_1, \mu_2 \in  \mathcal {P}(\R^n) $   by 
\begin{eqnarray*} 
 \mathcal {W}_p (\mu_1,\mu_2) := \inf_{\pi \in \sC (\mu_1, \mu_2)}  \left( \int_{\R^n\times\R^n} |x-y| ^p \pi (dx,dy) \right)^{1 / p } ,
\end{eqnarray*}
where $\sC (\mu_1, \mu_2)$ denotes the space of all the probability measures $\pi$ on $\R^n\times \R^n$  
with  marginals $ \mu_1$  and $ \mu_2$. 
 It is easy to see from the definition that
  for any two random variables $(X_1, X_2)$ 
 with  $X_1 \overset{d}=  \mu_1$ and $X_2 \overset{d}=  \mu_2$,   
\begin{equation} \label{e:3.2}
 |\EE[X_1]-\EE[X_2]| \le \mathcal {W}_2 (\mu_1, \mu_2) \le  \(\EE \left[  |X_1-X_2|^2 \right] \)^{1/ 2}.
 \end{equation} 
 Moreover, it is well known that 
   $\mathcal {W}_2 (\mu_1, \mu_2) \geq \mathcal {W}_1 (\mu_1, \mu_2) $. 
  \medskip
 
 \begin{hypothesis}\label{HP0-SDEs} \rm 
\begin{enumerate}
\item[\rm (i)]  
  $\EE \left[ \int_0^T |b (s, {\bf 0}, \delta_{\bf 0})|^2 ds  + \int_0^T \( |\delta  (s, {\bf 0},  \delta_{\bf 0})|^2   
   + |\sigma (s, {\bf 0}, \delta_{\bf 0})|^2\) dL_s  \right] <\infty. $  
	Here  $\bf 0$ denotes the origin in $\R^n$ and  $\delta_{\bf 0}$ denotes the Dirac measure concentrated
	at 0.
	  
 \item[\rm (ii)] $b$,   $\delta$  and $\sigma$  are uniformly Lipschitz continuous in $(x, \mu)$
   with Lipschitz constant $C_0 >0$.
 That is, there is a $\Omega_0\subset \Omega$ with $\bP(\Omega_0)=1$ so that for every
 $\omega \in \Omega_0$, $t\in [0, T]$, $x_i \in \R^n$ and $\mu_i\in \mathcal {P}(\R^n)$
  for $i=1,2$, 
\begin{equation*}
 |\varphi (t, x_1, \mu_1 ) -\varphi (t, x_2, \mu_2)  | 
 \le   C_0 \big(  | x_1-x_2  |	   	+  \mathcal {W}_2(\mu_1, \mu_2) \big)  
\quad \hbox{for } \varphi=b,  \de, \, \si .
  \end{equation*}
  \end{enumerate}
  \end{hypothesis}
 
  \medskip
  
  For $\beta>0$,  define a Banach norm 
$\| \cdot  \|_{ \mathcal M_{\beta}  [0,T]}$  on the space  
  \begin{eqnarray*}
\mathcal M [0,T] 
&:=& \Big\{ \psi(t): \hbox{ $\psi(t)$ is an $\R^n$-valued $\bF'$-progressively  measurable process}  \\
&&  \hskip 0.6 truein 
 \hbox{on $[0, T]$ with } 
  \E   \int_{0}^T  |\psi(t)|^2   dt    <\infty \Big\} 
  \end{eqnarray*}
by 
\begin{eqnarray}\label{x-norm}
\| \psi   \|_{ \mathcal M_{\beta} [0,T]}  
:=  \( \EE\bigg[ \int_{0}^Te^{-\be s}  |\psi(s)|^2ds\bigg]\)^{1/2}  . 
\end{eqnarray}

 \begin{definition}\label{Xsolution}
A stochastic processes $  X  \in \mathcal M [0,T]$ 
is said to be a $L^2$ strong solution of \eqref{SDE}  if  for any $t\in [0, T]$,
   \begin{equation}\label{XSDE}
X_t=x_0 + \int_0^t    b(s, X_s,\bP_{X_s} )ds+  \int_0^t  \delta (s, X_s, \bP_{X_s} )d L_s +  \int_0^t  \sigma   (s, X_s,\bP_{X_s} )  
dB_{L_s}.
\end{equation}
We say the solution to \eqref{SDE} is unique if $ X, \wt X \in \mathcal M[0,T]$  are two $L^2$ strong solutions of \eqref{SDE},
  then $\wt X_t=X_t$   for all $t\in [0, T]$ with probability one.
  \end{definition} 
 
 \begin{theorem}\label{exis-unique-SDEs}
 Suppose that  Hypothesis \ref{HP0-SDEs} holds.
Then for every $x\in \R^n$,  MF-SDE \eqref{SDE} has a unique  $L^2$ strong solution $X $ and 
$\EE \Big[ \sup\limits_{0 \leq t \leq T}  |X_t |^2  \Big]< \infty$ .
 \end{theorem}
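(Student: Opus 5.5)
We will prove the statement by a Banach fixed point argument on $\mathcal M[0,T]$ equipped with the weighted norm $\|\cdot\|_{\mathcal M_\beta[0,T]}$ from \eqref{x-norm}, with $\beta>0$ chosen large at the end.

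\emph{The map.} For $\psi\in\mathcal M[0,T]$ define
\[
\Phi(\psi)_t:=x_0+\int_0^t b(s,\psi_s,\bP_{\psi_s})\,ds+\int_0^t\delta(s,\psi_s,\bP_{\psi_s})\,dL_s+\int_0^t\sigma(s,\psi_s,\bP_{\psi_s})\,dB_{L_s}.
\]
Two facts make every term manageable. First, by \eqref{e:2.2}, $dL_s\le \kappa^{-1}ds$ as measures, so all $dL_s$-integrals are dominated by Lebesgue integrals. Second, $\bP_{\psi_s}$ is deterministic, so the coefficients remain $\bF'$-progressively measurable; hence the stochastic integral is a well-defined $\bF'$-martingale and $\Phi(\psi)$ is $\bF'$-adapted, with a continuous and therefore progressive version. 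To see that $\Phi(\psi)\in\mathcal M[0,T]$, combine Hypothesis \ref{HP0-SDEs}(i)--(ii) with $\mathcal W_2(\bP_{\psi_s},\delta_{\bf 0})\le(\EE|\psi_s|^2)^{1/2}$, which follows from \eqref{e:3.2}. This gives the growth bound $|\varphi(s,\psi_s,\bP_{\psi_s})|^2\le 3|\varphi(s,{\bf 0},\delta_{\bf 0})|^2+3C_0^2(|\psi_s|^2+\EE|\psi_s|^2)$. Applying Cauchy--Schwarz to the $ds$ and $dL_s$ terms and Doob's inequality together with $\langle B_L\rangle=L$ to the martingale term, we obtain $\EE\sup_{t\le T}|\Phi(\psi)_t|^2<\infty$.

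\emph{Contraction.} For $\psi,\phi\in\mathcal M[0,T]$, write $\Delta_t=\Phi(\psi)_t-\Phi(\phi)_t$. By Lipschitz continuity and \eqref{e:3.2}, each coefficient difference $\Delta\varphi_s$ satisfies
\[
|\Delta\varphi_s|^2\le 2C_0^2\big(|\psi_s-\phi_s|^2+\EE|\psi_s-\phi_s|^2\big),
\]
and taking expectations yields $\EE|\Delta\varphi_s|^2\le 4C_0^2\,\EE|\psi_s-\phi_s|^2$. Using Cauchy--Schwarz, It\^o's isometry and $dL_s\le\kappa^{-1}ds$, we get
\[
\EE|\Delta_t|^2\le K\int_0^t\EE|\psi_s-\phi_s|^2\,ds,\qquad K:=12C_0^2\,(T+T\kappa^{-2}+\kappa^{-1}).
\]
Multiplying by $e^{-\beta t}$, integrating over $[0,T]$ and applying Fubini gives
\[
\|\Delta\|^2_{\mathcal M_\beta}\le K\int_0^T\EE|\psi_s-\phi_s|^2\int_s^T e^{-\beta t}\,dt\,ds\le \frac{K}{\beta}\,\|\psi-\phi\|^2_{\mathcal M_\beta}.
\]
Choosing $\beta>2K$ makes $\Phi$ a contraction. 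Since the $\beta$-norm is equivalent to the unweighted one, $\mathcal M[0,T]$ is complete under it, and the Banach fixed point theorem yields a unique fixed point $X$. The bound $\EE\sup_t|X_t|^2<\infty$ follows from the estimate of the first step applied to $X=\Phi(X)$.

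\emph{Uniqueness in the pathwise sense.} Let $X,\wt X$ be two $L^2$ strong solutions. By the contraction they agree in $\mathcal M[0,T]$, so $X_s=\wt X_s$ for $ds\otimes d\bP$-a.e.\ $(s,\omega)$, and hence $\bP_{X_s}=\bP_{\wt X_s}$ for a.e.\ $s$. Then the right-hand sides of \eqref{XSDE} coincide for every $t$ almost surely: the $dL_s$-integrals agree because $dL\ll ds$, and the stochastic integrals agree by the isometry. Both processes are therefore continuous and equal at each $t$, so they are indistinguishable.

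\emph{Main obstacle.} The main subtlety is measurability and the handling of $dL$. One must check that the distribution dependence does not destroy $\bF'$-progressive measurability; it does not, since $\bP_{\psi_s}$ is deterministic. One must also check that $\int\cdot\,dB_{L_s}$ is a martingale integral with respect to $\bF'$, and not only with respect to $\wt\bF$; if this is problematic, the integral can be constructed in $\wt\bF$ and the adaptedness of the result checked afterwards. The positive drift $\kappa>0$ is what allows every $dL$-term to be absorbed into the Lebesgue-time estimate.
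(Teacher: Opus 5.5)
Your proposal is correct and follows essentially the same route as the paper: a Picard map on $\mathcal M[0,T]$ with the weighted norm $\|\cdot\|_{\mathcal M_\beta[0,T]}$, every $dL$-term absorbed via $dL_t\le\kappa^{-1}dt$, and the Banach fixed point theorem. The only technical difference is in the contraction step: you use the direct bound $\EE|\Delta_t|^2\le K\int_0^t\EE|\psi_s-\phi_s|^2\,ds$ together with Fubini, whereas the paper applies It\^o's formula to $e^{-\beta t}|\wh X_t|^2$ and absorbs the cross terms with Young's inequality; you also spell out how uniqueness of the fixed point gives pathwise uniqueness, which the paper leaves implicit.
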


\pf Given $\xi= \{ \xi_t; t\in [0, T]\} \in  \mathcal M[0,T]$, let 
   \begin{equation}\label{e:3.5}
X_t =x_0+ \int_0^t   b(s,   \xi_s , \bP_{\xi_s}  )  ds
+\int_0^t \delta (s,   \xi_s , \bP_{\xi_s} ) d L_s  + \int_0^t  \sigma( s,   \xi_s , \bP_{\xi_s}    )   dB_{L_s} 
   \end{equation}
  for $ t\in[0,T] .$ 
   Clearly, $X_t$ is continuous in $t$ and $\sF_t'$-measurable. 
  By  \eqref{e:2.3} and Hypothesis \ref{HP0-SDEs}, there is a constant $C>0$ so that  for every $t\in [0, T]$, 
  \begin{eqnarray*}
  && \E [ |X_t|^2 ] \\
  &\leq & 
  4 |x_0|^2+ 4 \E \left[ \Big| \int_0^t   b(s,   \xi_s , \bP_{\xi_s}  )  ds \Big|^2  +
 \Big| \int_0^t \delta (s,   \xi_s , \bP_{\xi_s} ) d L_s \Big|^2 + 
 \Big| \int_0^t  \sigma( s,   \xi_s , \bP_{\xi_s}    )   dB_{L_s}\Big|^2   \right] \\
 &\leq & 4 |x_0|^2+ 4  \E  \left[ t \int_0^t   | b(s,   \xi_s , \bP_{\xi_s}  ) |^2  ds
  + \kappa^{-1}t  \int_0^t  |\delta (s,   \xi_s , \bP_{\xi_s} )| ^2 d L_s  + 
  \int_0^t  | \sigma( s,   \xi_s , \bP_{\xi_s}    )  |^2   d L_s  \right]\\ 
     &\leq &  4|x_0|^2+  C  \E  \left[ t \int_0^t   | b(s,   0, \delta_0 )  |^2  ds
  +   \int_0^t  | \delta (s,   0, \delta_0 ) |^2 d L_s  + 
  \int_0^t  | \sigma( (s,   0, \delta_0  )  |^2   d L_s  \right]\\ 
  && + C \E \int_0^T \( |\xi_s|^2 + \sW_2 ( \bP_{\xi_s}, \delta_{\bf 0})^2 \) ds  \\
   &\leq &  4|x_0|^2+  C  \E  \left[  \int_0^t   | b(s,   0, \delta_0 )  |^2  ds
  +   \int_0^t  | \delta (s,   0, \delta_0 ) |^2 d L_s  + 
  \int_0^t  | \sigma( (s,   0, \delta_0  )  |^2   d L_s     +  2  \int_0^T |\xi_s|^2  ds  \right] \\
  &< & \infty. 
  \end{eqnarray*}
This shows that $X\in \sM [0, T]$. 
 It defines a map    $\Phi:  \mathcal M [0,T]\rightarrow  \mathcal M  [0,T] $ by sending $\xi \in \sM [0, T] $ to   $  X \in \sM [0, T]$.  
   We next show that the map $\Phi$ is contractive with respect to the Banach norm $\| \cdot \|_{ \sM_\beta [0,T]}$
 for sufficiently large $\beta >1$.  
 
For $\xi$  and $\wt \xi $ in $M[0,T]$, let    $    X = \Phi (\xi )$ and $ \wt X  = \Phi  (\wt \xi )$.
 For notational simplicity, define
 $$
 \wh X_t:= X_t - \wt X_t, \quad  \quad   \wh \xi_t:= \xi_t - \wt \xi_t,    
 $$
 and
 \begin{eqnarray*}
 \wh b (t):=  b(t,   \xi_t, \bP_{\xi_t} )- b(t, \wt \xi_t, \bP_{\wt \xi_t}  ),  \\ \quad
 \wh \delta (t):= \delta(t,   \xi_t, \bP_{\xi_t} )- \delta(t, \wt \xi_t, \bP_{\wt \xi_t}  ), \\
 \quad
   \wh \si (t):= \si (t,   \xi_t, \bP_{\xi_t} )- \si (t,\wt \xi_t, \bP_{\wt \xi_t}  )  .
\end{eqnarray*}
   Then
 $$
  \wh X_t  =  \int_0^t  \wh b(s) ds     +\int_0^t  \wh \delta (s) dL_s   + \int_0^t  \wh \si(s) dB_{L_s}.
$$
 Let $\beta>0$, whose value will be taken to be sufficiently large later. 
By Ito's formula, 
 \begin{eqnarray*}\label{ }
 d \(  e^{-\beta t} |\wh X_t|^2  \)  
 &=&    e^{-\beta t}   \left(-\beta |\wh X_t|^2  +2  \wh X_t \cdot   \wh b ( t)   \right) dt 
 +  e^{-\beta t}\left(2  \wh X_t    \cdot  \wh \delta(t) +|\wh \si(t)|^2 \right)  dL_t \\
 && +     2  e^{-\beta t}      \wh X_t \cdot \wh \si ( t)        d B_{L_t} .     
\end{eqnarray*} 
 Integrating over $[0,T]$ and taking expectation on both sides,  we have by Hypothesis \ref{HP0-SDEs}(ii)
 and \eqref{e:3.2} that 
\begin{eqnarray}
&&  \beta \EE\bigg[\int_{0}^T  e^{-\beta t}   |\wh X_t|^2 dt\bigg]\nonumber\\
&\le&\EE \bigg[\int_{0}  ^T 2 e^{-\beta t}   |\wh X_t|  | \wh b ( t) |     dt\bigg] 
+\EE\bigg[\int_ {0}  ^T     e^{-\beta t}  \(     2  |\wh X_t    |       |  \wh \delta(t)|     +         |\wh \si(t)|^2\)   dL_t \bigg]   \nonumber\\
&\le&   \EE \bigg[\int_{0}^T 2 C_0 e^{-\beta t}   
  |\wh X_t|   \(|\wh \xi_t|  + \mathcal {W}_2  ( \bP_{ \xi_t }, \bP_{  \wt \xi_t  } )   \)      dt\bigg] \nonumber\\
&&+  \EE\bigg[\int_{0}  ^Te^{-\beta t}  \(  2 C_0  |\wh X_t|   \(|\wh \xi_t|  + \mathcal {W}_2  ( \bP_{ \xi_t }, \bP_{  \wt \xi_t  } )   \)  
  +     C_0 ^2  \(|\wh \xi_t|  + \mathcal {W}_2  ( \bP_{ \xi_t }, \bP_{  \wt \xi_t  } )    \) ^2  \) \kappa^{-1}\1_{\{  R_t=0  \}}      dt \bigg]   \nonumber\\
&\le&\EE \bigg[\int_{0}^T e^{-\beta t}   \bigg(   \frac { |\wh X_t| ^2} 4 + 4  C_0 ^2        \(|\wh \xi_t|  + \mathcal {W}_2  ( \bP_{ \xi_t }, \bP_{  \wt \xi_t  } )    \)  ^2 \bigg)   dt\bigg] \nonumber\\
&&+     \EE \bigg[\int_{0}^T  e^{-\beta t}   \Big(  \frac { |\wh X_t| ^2} 4 + 
 5 \kappa^{-1}   C_0 ^2         \(|\wh \xi_t|  + \mathcal {W}_2  ( \bP_{ \xi_t }, \bP_{  \wt \xi_t  } )   \)  ^2  \Big) dt \bigg]     \nonumber\\ 
 &=&2^{-1} \EE \bigg[\int_{0}^T e^{-\beta t}     |\wh X_t| ^2 dt\bigg] 
  +      C_0^2 (4+5\kappa^{-1})     \EE \int_{0}^T e^{-\beta t}  
       \(|\wh \xi_t| ^2 +   \mathcal {W}_2  ( \bP_{ \xi_t }, \bP_{  \wt \xi_t  } )^2      \)      dt\bigg] \nonumber\\
  &\le & 2^{-1} \EE \bigg[\int_{0}^T e^{-\beta t}  |\wh X_t| ^2  dt\bigg] 
  + 2 C_0^2 (4+5\kappa^{-1})     \EE \int_{0}^T e^{-\beta t}      |\wh \xi_t| ^2       dt\bigg] .
 \end{eqnarray} 
 Thus, 
\begin{eqnarray*}
  (2\beta -1)  \EE\bigg[   \int_{0}^T e^{-\beta t}     |\wh X_t| ^2     dt   \bigg]  
&\le &      4 C_0^2 (4+5\kappa^{-1})   
\EE \int_{0}^T e^{-\beta t}     |\wh \xi_t| ^2    dt   .
 \end{eqnarray*}
Choosing $\beta > 1 $ sufficiently large so that  
$$  
  \frac { 4 C_0^2 (4+5\kappa^{-1})   }  {2\beta -1} < 1/4. 
 $$
Then   $\Phi$ is a contraction map on the Banach space  
   on $({\mathcal M}[0, T], \| \cdot  \|_{{\mathcal M}_{\beta} [0,T] } ) $ with  
 $$
 \| \Phi (\xi ) -\Phi (\wt \xi) \|_{{\mathcal M}_{\beta} [0,T] }  \leq \tfrac12 \, \| \xi -\wt \xi \|_{{\mathcal M}_{\beta} [0,T] }
 \quad \hbox{for any } \xi, \wt \xi \in M[0,T]. 
 $$ 
 Hence $\Phi$ has a unique fixed point  $\bar X $ in ${\mathcal M}[0, T]$,
 which is the unique $L^2$ strong solution to  the  MF-SDE  \eqref{SDE}.  At last,   using Doob's $L^2$-maximal inequality, 
 we can deduce from the 
  Ito's formula applied to $|\wh X_t  |^2  $  and Hypothesis \ref{HP0-SDEs}   that 
  $\EE \Big[ \sup\limits_{0 \leq t \leq T}  |X_t|^2 \Big]< \infty$.
 \qed

   \section{ Existence and uniqueness of solution for  MF-BSDEs}\label{S:4}
   
   Let $n \geq 1$ be an integer. Denote by $L^2(\wt\sF_T; \R^n) $
    the space of square integrable  $\R^n$-valued $\wt \sF_T$-measurable random variables.
   To study stochastic maximum principle for  MF-SDEs \eqref{SDE},  
   we need to establish the existence and uniqueness 
of solutions to the following 
 MF-BSDE driven by sub-diffusions on $[0, T]$ for any $T> 0$:
\begin{equation}\label{e:BSDE}
 d Y_t   =h_1(\omega, t,Y_t,Z_t,\bP_{(Y_t,Z_t)})
 dt  + h_2 (\omega, t,Y_t,Z_t,\bP_{(Y_t,Z_t)})dL_t +Z_t d B_{L_t}  \quad \hbox{with }
Y_T  = \xi,
 \end{equation}
 where $\xi \in L^2(\wt\sF_T; \R^n) $, 
 $Y$ and $Z$ are $\R^n$-valued and $\R^{n\times d}$-valued $\wt \bF$-progressively measurable processes
and  $\bP _{( Y_t, Z_t)}  $  is the probability measure  (or law) induced  by   $( Y_t, Z_t)$. 
For each fixed $y\in \R^n$, $z \in \R^{n\times d}$, a probability measure  $\mu $ on $\R^n\times \R^{n\times d}$ and $i=1, 2$, 
  $(\omega, t)  \rightarrow h_i (\omega, t, y, z, \mu ) $  is an $\R^n$-valued   $\wt \bF$-progressively
  measurable process  defined on $\Omega$.
  Conditions for $h_i$ on variables $  y, z$ and $\mu$  will be imposed later in Hypothesis \ref{HP-BSDE}.  
For notational simplicity, we will typically drop $\omega$ from the expressions  of the above random processes or variables.

\begin{definition}\label{D:4.1} Let $T\in (0, \infty)$ and $R_t:=S_{L_t}-t$ be the overshoot process with $R_0=0$. 
\begin{enumerate} 
\item [\rm (i)] Denote by $\sM^2 [0,T]$ the space of 
 a pair $(Y, Z)$ of $\wt \bF$-progressively measurable processes
  on $[0, T]$ taking values in $\R^n \times \R^{n\times d}$ so that   
\begin{equation}\label{e:4.1a} 
Z_t = \1_{\{R_t=0\}} Z_t \  \hbox{ for } \ t\in [0, T] \quad \hbox{ and } \quad 
   \EE  \Big[    \int_{0}^T  |Y_t|^2   dt + \int_0^T |Z_t|^2 dL_t     \Big] <\infty .
\end{equation} 
For $\beta >0$, define a norm $\| (Y, Z)  \|_{ \sM^2_{\beta} [0,T]}$ on $\sM^2[0, T]$ by  
\begin{eqnarray}\label{e:4.2a}
\| (Y, Z)  \|_{ \sM^2_{\beta} [0,T]}  
:=  \( \EE\bigg[ \int_{0}^Te^{\be s}  |Y_s|^2ds + \int_0^T e^{\be s}|Z_s|^2 dL_s \bigg]\)^{1/2}  . 
\end{eqnarray}

\item [\rm (ii)]  A pair of process $ (Y, Z) $ is said to be 
 an $L^2$ adapted  solution of the BSDE \eqref{e:BSDE}  if  $ (Y, Z) \in \sM^2 [0,T]$ and
  for any $t\in [0, T]$,
\begin{equation}\label{e:BSDE2}
   Y_t   = \xi-\int_t^T h_1 (s,Y_s,Z_s,\bP_{(Y_s, Z_s)}) ds -\int_t^T   h_2  (s,Y_s,Z_s,\bP_{(Y_s, Z_s)})  dL_s
     -\int_t^T  Z_t d B_{L_s} .
  \end{equation}
\end{enumerate}
 \end{definition}

 \medskip

\begin{remark} \rm 
\begin{enumerate}
\item[(i)] Note that changing values of   the   process $Z$ over the random time intervals during which $L_t$ is flat (or equivalently,
when $R_t>0$) will not change the values of  the integrals by $dL_t$ and $dB_{L_t}$ in \eqref{e:BSDE2}, 
but it will change the value of the integral iby $dt$. So we need to specify the values of $Z_t$  
over the random time intervals during which $R_0>0$. It is natural to require $Z_t$  be zero when $R_t>0$
as in Definition \ref{D:4.1}(i)
for the space $\sM^2 [0, T]$. For a process $Z$ satisfying 
   $Z_s = \1_{\{R_s=0\}} Z_s$ $\bP$-a.s. for almost every $s\in [0, T]$,   
   we have by \eqref{e:2.3} that 
 \begin{equation}\label{e:4.9a}
 \E \int_0^T |Z_s|^2 ds = \kappa \E  \int_0^T  \kappa^{-1}  \1_{\{R_s=0\}} |Z_s|^2 ds 
 =  \kappa  \E  \int_0^T   |Z_s|^2 dL_s.
 \end{equation} 

\item[(ii)] Notice the different measurability requirement for $\sM [0, T]$ in Section \ref{S:3}
and $\sM^2[0, T]$. A process $X$ in $\sM [0, T]$ is $\bF'$-progressively measurable, 
while processes $(Y, Z)$ in $\sM^2 [0, T]$ are $\wt \bF$-progressively measurable. 
In addition, unlike the norm $\| \cdot \|_{ \sM_{\beta} [0,T]}  $ defined in \eqref{x-norm} on the space $\sM [0, T]$,
the norm  $ \| (Y, Z)  \|_{ \sM^2_{\beta} [0,T]} $ defined in \eqref{e:4.2a} on the space  $\sM^2 [0, T]$ carries
exponential weight $e^{\be s}$ instead of $e^{-\beta s}$. 
\qed
\end{enumerate} 
\end{remark}

\medskip

 \begin{hypothesis}\label{HP-BSDE} \rm 
 \begin{enumerate}
\item[\rm (i)]  $ 
  \EE \left[ \int_0^T  | h _1(s, {\bf 0},  \delta_{\bf 0} ) |^2 ds  +   \int_0^T | h_2(s, {\bf 0},  \delta_{\bf 0})|^2 dL_s  \right] <\infty,$
where ${\bf 0}$ denotes the origin in $\R^n\times \R^{n\times d}$  
	    and $\delta_{\bf 0}$ denotes the Dirac measure concentrated at {\bf 0}.
	      
 \item[\rm (ii)] For each $i=1, 2$, 
   $h_i (t, y, z, \mu)$ is uniformly Lipschitz continuous in $(  y, z,\mu) \in \R \times \R \times  \sP (\R^2)$    
   with Lipschitz constant $C_0>0$.
 That is, there is a $\Omega_0\subset \Omega$ with $\bP(\Omega_0)=1$ so that for every
 $\omega \in \Omega_0$, $t\in [0, T]$, $x_i, y_i, z_i\in \R$ and $\mu_i\in \mathcal {P}(\R^2)$  with $i=1,2$, 
\begin{equation}\label{e:4.5a} 
   |h_k (t,, y_1, z_1,\mu_1 ) -h_k(t, y_2, z_2,\mu_2)  | 
\leq  C_0 \, \big(  | y_1-y_2  | +	   | z_1-z_2  |   	+  \mathcal {W}_2(\mu_1, \mu_2) \big)
\quad \hbox{for } k=1, 2. 
  \end{equation}
   \end{enumerate}
  \end{hypothesis}

\begin{prop}\label{propBSDE}
Suppose that Hypothesis \ref{HP-BSDE}    holds and $(Y,Z)$ is an $L^2$ adapted  solution to   
 \eqref{e:BSDE}. Then   
\begin{eqnarray} \label{Est-BSDE}
   \EE \Big[ \sup\limits_{t\in[0,T]}  |  Y_t |^2    +      \int_{0}^{T} \vert Z_s \vert^{2} dL_s      \Big]  
    \leq    C\EE \left[   | \xi |^{2} + \int_{0}^{T}  | h_1  (s,   {\bf 0}, \delta_{\bf 0}  ) |^{2}ds+ 
   \int_{0}^{T} | h_2  (s,   {\bf 0}, \delta_{\bf 0}  ) |^{2} d  L_s  
   \right].  
     \end{eqnarray}
 \end{prop}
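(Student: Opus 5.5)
Apply Itô's formula to $e^{\beta t}|Y_t|^2$ on $[t,T]$, with $\beta>0$ to be chosen large. Since $dY_s = h_1\,ds + h_2\,dL_s + Z_s\,dB_{L_s}$ and $d\langle B_L\rangle_s = dL_s$,
\begin{align*}
e^{\beta t}|Y_t|^2 + \int_t^T \beta e^{\beta s}|Y_s|^2 ds + \int_t^T e^{\beta s}|Z_s|^2 dL_s
= e^{\beta T}|\xi|^2 - \int_t^T 2e^{\beta s} Y_s\cdot h_1(s)\, ds \\
- \int_t^T 2 e^{\beta s} Y_s\cdot h_2(s)\, dL_s - \int_t^T 2e^{\beta s} Y_s\cdot Z_s\, dB_{L_s},
\end{align*}
where $h_i(s)$ is shorthand for $h_i(s,Y_s,Z_s,\bP_{(Y_s,Z_s)})$. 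By Hypothesis \ref{HP-BSDE}(ii) and \eqref{e:3.2}, $\mathcal W_2(\bP_{(Y_s,Z_s)},\delta_{\bf 0})^2\le \EE[|Y_s|^2+|Z_s|^2]$, so
$$
|h_i(s)|\le |h_i(s,{\bf 0},\delta_{\bf 0})| + C_0\big(|Y_s|+|Z_s| + (\EE|Y_s|^2+\EE|Z_s|^2)^{1/2}\big).
$$

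First bound $\EE\int_0^T e^{\beta s}(|Y_s|^2 ds + |Z_s|^2 dL_s)$. Take $t=0$ and expectations; the stochastic integral is a true martingale after localization, or its expectation vanishes since $(Y,Z)\in\sM^2[0,T]$.

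\emph{The $dt$ term.} Use Young's inequality,
$$
2|Y_s||h_1(s)| \le \epsilon^{-1}|Y_s|^2 + \epsilon\big(4|h_1(s,{\bf 0},\delta_{\bf 0})|^2 + 4C_0^2(|Y_s|^2+|Z_s|^2+\EE|Y_s|^2+\EE|Z_s|^2)\big).
$$
This is the key difficulty, since the $dt$ driver depends on $Z$ while $Z$ is only controlled in $dL$. Because $Z_s=\1_{\{R_s=0\}}Z_s$, identity \eqref{e:4.9a} gives $\EE\int_0^T e^{\beta s}|Z_s|^2 ds = \kappa\,\EE\int_0^T e^{\beta s}|Z_s|^2 dL_s$. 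Choosing $\epsilon$ small, say $8\epsilon C_0^2\kappa\le 1/4$, absorbs these $Z$ terms into the left side.

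\emph{The $dL$ term.} Use $dL_s=\kappa^{-1}\1_{\{R_s=0\}}ds\le \kappa^{-1}ds$ from \eqref{e:2.3}. Then
$$
2|Y_s||h_2(s)|\,dL_s \le \epsilon^{-1}\kappa^{-1}|Y_s|^2 ds + \epsilon\big(4|h_2(s,{\bf 0},\delta_{\bf 0})|^2 + 4C_0^2|Y_s|^2\big)dL_s + 4\epsilon C_0^2\big(|Z_s|^2 + \EE|Z_s|^2 + \EE|Y_s|^2\big)dL_s .
$$
The $\EE|Z_s|^2\,dL_s$ term is handled by $dL_s\le\kappa^{-1}ds$ together with \eqref{e:4.9a}, giving at most $\kappa^{-1}\cdot\kappa\,\EE\int e^{\beta s}|Z_s|^2dL_s$. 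The $\EE|Y_s|^2$ terms are bounded by $\kappa^{-1}|\cdot|$ in $ds$.

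\emph{Absorbing.} All $Y$-terms now carry coefficients of order $\epsilon^{-1}(1+\kappa^{-1})+C$ against $ds$. Choosing $\beta$ larger than twice this absorbs them. The result is
$$
\EE\Big[\int_0^T e^{\beta s}|Y_s|^2ds+\int_0^T e^{\beta s}|Z_s|^2dL_s\Big]\le C\,\EE\Big[|\xi|^2+\int_0^T|h_1(s,{\bf 0},\delta_{\bf 0})|^2ds+\int_0^T|h_2(s,{\bf 0},\delta_{\bf 0})|^2dL_s\Big],
$$
and the weights $e^{\beta s}\in[1,e^{\beta T}]$ can then be dropped.

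\emph{The supremum.} Return to the identity without expectation and take $\sup_t$. The drift terms are bounded, exactly as above, by integrals already controlled. For the martingale term, Burkholder–Davis–Gundy gives
$$
\EE\sup_t\Big|\int_t^T 2e^{\beta s}Y_s\cdot Z_s\,dB_{L_s}\Big|\le C\,\EE\Big(\int_0^T|Y_s|^2|Z_s|^2dL_s\Big)^{1/2}\le \tfrac12\EE\sup_s|Y_s|^2 + C'\EE\int_0^T|Z_s|^2dL_s .
$$
Absorbing $\tfrac12\EE\sup|Y|^2$ requires finiteness of $\EE\sup|Y_s|^2$ a priori. This I would obtain by localization with stopping times, or directly from \eqref{e:BSDE2}, Doob's inequality and the $\sM^2$ integrability. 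Combining with the previous step yields \eqref{Est-BSDE}.
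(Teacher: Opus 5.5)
Your proof is correct and follows essentially the paper's route: It\^o's formula for $|Y|^2$, absorbing the $Z$-dependence of the $dt$-driver via $Z=\1_{\{R=0\}}Z$ and \eqref{e:4.9a}, then BDG for the supremum, the only difference being that you absorb the $Y$-terms with the weight $e^{\beta t}$ rather than by the paper's Gronwall step on $\EE|Y_t|^2$. You should establish $\EE\sup_{t}|Y_t|^2<\infty$ directly from \eqref{e:BSDE2} and Doob's inequality (the paper's Step (i)) before your first step, because that is what makes $\int_0^{\cdot} e^{\beta s}Y_s\cdot Z_s\,dB_{L_s}$ a true martingale there; membership in $\sM^2[0,T]$ alone, or localization without uniform integrability of $|Y_{\tau_n}|^2$, does not give it.
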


\pf   We divide the proof into several steps. 
Let $(Y,Z)$ be an $L^2$ adapted  solution to     \eqref{e:BSDE}.
Then
\begin{equation}\label{e:4.7a} 
Y_t= Y_0 + \int_0^t h_1 (s,Y_s,Z_s,\bP_{(Y_s, Z_s)}) ds +\int_0^t    h_2  (s,Y_s,Z_s,\bP_{(Y_s, Z_s)})  dL_s
     + \int_0^t  Z_t d B_{L_s} .
  \end{equation} 
     
  (i) In this step, we show 
 $      \E \Big[ \sup_{s\in [0, T]} |Y_s|^2 \Big] <\infty.$
    As 
   \begin{eqnarray*}
     \sup_{t\in [0, T]} |Y_t |  
     &\leq &  |Y_0| + \int_0^T  | h_1 (s,Y_s,Z_s,\bP_{(Y_s, Z_s)}) | ds +\int_0^T |    h_2  (s,Y_s,Z_s,\bP_{(Y_s, Z_s)}) | dL_s \\
&&     + \sup_{t\in [0, T]} \Big|  \int_0^t  Z_s d B_{L_s} \Big| \\
       &\leq &  |Y_0| + \int_0^T  | h_1 (s,{\bf 0},\delta_{\bf 0} ) | ds
       + \int_0^T  |  h_1 (s,Y_s,Z_s,\bP_{(Y_s, Z_s)}) -h_1 (s,{\bf 0},\delta_{\bf 0} ) | ds \\
       && + \int_0^T  | h_2 (s,{\bf 0},\delta_{\bf 0} ) | dL_s
       + \int_0^T  | h_2  (s,Y_s,Z_s,\bP_{(Y_s, Z_s)}) -h_2 (s,{\bf 0},\delta_{\bf 0} ) | dL_s \\
       && + \sup_{t\in [0, T]} \Big|  \int_0^t  Z_s d B_{L_s} \Big| , 
 \end{eqnarray*}
 we have by Hypothesis \ref{HP-BSDE}, \eqref{e:2.3}, \eqref{e:3.2}  and Doob's $L^2$-maximal inequality that 
 \begin{eqnarray} \label{e:4.8a}
     \E \Big[ \sup_{s\in [0, T]} |Y_s|^2 \Big] 
     &\leq & C_1 \E [|Y_0|^2] + C_1 \E \int_0^T   | h_1 (s,{\bf 0},\delta_{\bf 0} ) |^2  ds
     + C_1 \E \int_0^T   | h_2 (s,{\bf 0},\delta_{\bf 0} ) |^2  dL_s  \nonumber \\
     && + C_1 \E \int_0^T \left(|Y_s|^2 + |Z_s|^2 + \sW_2^2( \bP_{(Y_s, Z_s)}, \delta_{\bf 0} ) \right) ds
       + C_1 \E \int_0^T |Z_s|^2 dL_s   \nonumber \\
       &\leq & C_2 + C_3  \E \int_0^T \left(|Y_s|^2 + |Z_s|^2   \right) ds
       + C_1 \E \int_0^T |Z_s|^2 dL_s   \nonumber \\
       &\leq & C_2 + C_3  \E \int_0^T  |Y_s|^2   ds        + C_4 \E \int_0^T |Z_s|^2 dL_s ,
 \end{eqnarray}
 where the last inequality is due to \eqref{e:4.9a}. 
  This establishes the finiteness of   $\E \Big[ \sup_{s\in [0, T]} |Y_s|^2 \Big]$.

 (ii) We claim that 
 \begin{equation}\label{e:4.10a}
 M_t:= \int_0^t Y_s \cdot Z_s dB_{L_s}
 \  \hbox{ is a uniformly integrable $\wt \bF$- martingale.}
 \end{equation}
      Note that $M$ is a continuous local martingale with quadratic variation
  $\< M\>_t = \int_0^t | Y_s Z_s|^2 dL_s$. By  the  Burkholder-Davis-Gundy inequality, 
\begin{eqnarray*}\label{e:4.11a}
	\E \Big[ \sup_{t\in [0, T]}|M_t|\Big]
	&\leq&  C_5 \E \left[ \<M\>_T^{1/2}\right]   
	\leq C_5 \E \left[ \sup_{t\in [0, T] } |Y_t| \Big( \int_0^T |Z_s|^2 d L_s \Big)^{1/2} \right]  \\
	&\leq & \frac{C_5}{2} \E \Big[ \sup_{s\in [0, T]} |Y_s|^2  + \int_0^T |Z_s|^2 dL_s \Big] <\infty.   \nonumber
\end{eqnarray*}
	Hence $\{M_t; t\in [0, T]\}$ is a uniformly integrable martingale.

 (iii) We now proceed to show \eqref{Est-BSDE}. 
   By Ito's formula,  
 $$
 d |Y_s|^2 = 2 Y_s  \cdot dY_s + d \< Y, Y\>_s.
$$
    Integrating the above from $t$ to $T$,
we have by Hypothesis \ref{HP-BSDE} and \eqref{e:2.3},  
\begin{eqnarray} \label{e:4.13a} 
&&|Y_t|^2+ \int_t^T  | Z_s|^2d  L_s \nonumber  \\
& = & |\xi|^2 + \int_t^{T} 2  Y_s \cdot  h_1( s, , Y_s,Z_s, \bP_{(Y_s,Z_s)} )  ds 
+ \int_t^{T}  2Y_s \cdot h_2  ( s, , Y_s,Z_s, \bP_{(Y_s,Z_s)} )  dL_s + 2    \int_t^{T}   Y_s  \cdot Z_s d B_{L_s} \nonumber \\
 & \leq & |\xi|^2 +2 \int_t^T |Y_s|\, | h_1 ( s, {\bf 0}, \delta_{\bf 0} ) | ds + 2 \int_t^T |Y_s| \, | h_2 ( s, {\bf 0}, \delta_{\bf 0} )  | dL_s 
 \nonumber \\
&&  + (2+ \kappa^{-1})  C_0  \int_t^{T}    |Y_s| \(    |Y_s|  +   |Z_s|+
    \mathcal {W}_2  ( \bP_{( Y_t, Z_t)  }, \bP_{ \delta_{\bf 0}  } )   \) ds
   +   2 \int_t^{T}    Y_s  \cdot Z_sd B_{L_s}  .
\end{eqnarray}
Thus by Step (ii), \eqref{e:3.2},  Young's inequality and \eqref{e:4.9a},  
\begin{eqnarray*}
&& \EE\bigg[ |Y_t|^2+ \int_t^T  | Z_s|^2d  L_s\bigg] \nonumber\\
& \leq & \EE\bigg[ |\xi|^2  +  C_6 \int_t^{T} \(     |Y_s|^{2 } + |h_1( s, {\bf 0}, \delta_{\bf 0}) |^2 \) ds
+ C_6 \int_t^T  |h_2 ( s, {\bf 0}, \delta_{\bf 0} ) |^2 d L_s  
\nonumber \\
 &&\quad + \frac 1 2\int_t^T  | Z_s|^2d  L_s \bigg] 
\end{eqnarray*} 
for some $C_6>1$.
Hence we have
\begin{equation}\label{E-1} 
\EE\Big[  |Y_t|^2+  \frac12 \int_t^T  | Z_s|^2d  L_s\Big]
\leq C_6 \EE\Big[ |\xi|^2  +    \int_t^{T} \(     |Y_s|^{2 } + |h_1( s, {\bf 0}, \delta_{\bf 0}) |^2 \) ds
+   \int_t^T  |h_2 ( s, {\bf 0}, \delta_{\bf 0} ) |^2 d L_s  \Big].
\end{equation}
Dropping the second term on the left hand side and applying Gronwall's inequality  yields
\begin{eqnarray}\label{BSDE-Y-expec}
 \EE \Big[ |Y _t|^2 \Big]  \leq    C_7  \EE\bigg[ |\xi|^2 +  \int_{0}^{T}  
  | h_1( s, {\bf 0}, \delta_{\bf 0} ) |^2 ds+ \int_0^T | h_2 ( s, {\bf 0}, \delta_{\bf 0} ) |^2 d   L_s   \bigg].
 \end{eqnarray} 
This together by taking   $t=0$  in  \eqref{E-1} gives 
 \begin{eqnarray}\label{BSDE-YZ-expec}
  \EE \bigg[  \int_0^T   |Z_s|^2d  L_s\bigg] 
 \leq     C_8  \EE\bigg[ |\xi|^2 +  \int_{0}^{T}   |h_1 ( s, {\bf 0}, \delta_{\bf 0} )|^2 ds
 + \int_0^T | h_2 ( s, {\bf 0}, \delta_{\bf 0})  |^2 d   L_s   \bigg] .
 \end{eqnarray}
Thus  by \eqref{e:4.13a}, \eqref{e:2.3},  \eqref{e:3.2}, \eqref{e:4.9a}, Young's inequality and Doob's $L^2$-maximal inequality,   
\begin{eqnarray*} 
&&\EE \left[ \sup\limits_{0\le t \le T} |Y_t|^2 \right] \\
&\leq & \EE \Big[ |\xi|^2 + \int_0^T |Y_s|\, | h_1 ( s, {\bf 0}, \delta_{\bf 0} ) | ds 
+ \int_0^T |Y_s| \, | h_2 ( s, {\bf 0}, \delta_{\bf 0} )  | dL_s 
 \nonumber \\
&&  + (2+ \kappa^{-1})  C_0  \int_0^{T}     |Y_s| \(    |Y_s|  +   |Z_s|  + \sqrt{ \E [ |Y_s|^2+|Z_s|^2 ] }  \) ds
   +   2 \sup_{t\in [0, T]} |M_T -M_t| \Big] \\
 & \leq & \EE\bigg[ |\xi|^2  +  C_9 \int_{0}^{T} \(     |Y_s|^{2 } + |h_1 ( s, {\bf 0}, \delta_{\bf 0} )|^2 \) ds
+ C_9 \int_0^T  |h_2 ( s, {\bf 0}, \delta_{\bf 0} ) |^2 d L_s 
\nonumber \\
 &&+ C_9\int_0^T  | Z_s|^2d  L_s + C_9 \bigg(  \sup\limits_{0\le t \le T} 
 |Y_t|^2  \int_0^T  | Z_s|^2d  L_s   \bigg)^{1 / 2} \bigg] \nonumber\\
& \leq & \EE\bigg[ |\xi|^2  +  C_9 \int_{0}^{T} \(     |Y_s|^{2 } + |h_1( s, {\bf 0}, \delta_{\bf 0} )|^2 \) ds
+ C_9 \int_0^T  |h_2 ( s, {\bf 0}, \delta_{\bf 0} ) |^2 d L_s \bigg] 
\nonumber \\
 &&+ C_{10} \EE \int_0^T  | Z_s|^2d  L_s +  
\frac 1 2 \EE \left[ \sup\limits_{0\le t \le T} |Y_t|^2 \right].  
 \end{eqnarray*}
 It now follows from  Step (i), \eqref{BSDE-Y-expec}  and  \eqref{BSDE-YZ-expec} that
 \begin{eqnarray*} 
\EE \left[  \sup\limits_{0\le t \le T} |Y_t|^2 \right] 
&\leq &  2  \EE\bigg[ |\xi|^2  +  C_9 \int_{0}^{T} \(     |Y_s|^{2 } + |h_1 ( s, {\bf 0}, \delta_{\bf 0}  )|^2 \) ds
+ C_9 \int_0^T  |h_2  ( s, {\bf 0}, \delta_{\bf 0} )|^2 d L_s   \\
&& \qquad   + \,  C_{10} \int_0^T  | Z_s|^2d  L_s \bigg]  \\
&\le&C_{11}  \EE\bigg[ |\xi|^2  +    \int_{0}^{T}  |h_1 ( s,  {\bf 0}, \delta_{\bf 0}  )|^2  ds+  
 \int_0^T  |h_2  ( s,  {\bf 0}, \delta_{\bf 0}  ) |^2 d L_s \bigg] .
 \end{eqnarray*}
 This together with \eqref{BSDE-YZ-expec} establishes the desired estimate \eqref{Est-BSDE}. 
  \qed

\medskip

\begin{theorem}\label{T:BSDE}
Suppose Hypothesis \ref{HP-BSDE} holds. For any given $\xi \in  L^2 (\wt\sF_T;\R^n)$,
  the BSDE  \eqref{e:BSDE2}  admits unique $L^2$ adapted solution $(Y , Z ) \in \sM^2 [0,T]$
    in the sense that if $(\wt Y, \wt Z ) \in \sM^2 [0,T]$ is another solution of \eqref{e:BSDE2},
  then $\wt Y_t=Y_t$ for all $t\in [0, T]$ with probability one and $\E \int_0^T |Z_s -\wt Z_s|^2 dL_ s    =0$.
\end{theorem}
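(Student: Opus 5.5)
We will prove Theorem \ref{T:BSDE} by a contraction argument on $(\sM^2[0,T], \|\cdot\|_{\sM^2_\beta[0,T]})$, following the scheme used for Theorem \ref{exis-unique-SDEs}, with the martingale representation Theorem \ref{T:2.2} replacing the forward construction.

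\textbf{The map.} Given $(y,z)\in\sM^2[0,T]$, freeze the coefficients by setting
\[
f_i(s):=h_i(s,y_s,z_s,\bP_{(y_s,z_s)}), \qquad i=1,2.
\]
By Hypothesis \ref{HP-BSDE}, \eqref{e:3.2} and \eqref{e:4.9a}, we have $\E\int_0^T|f_1|^2ds+\E\int_0^T|f_2|^2dL_s<\infty$. Define
\[
\eta:=\xi-\int_0^T f_1\,ds-\int_0^T f_2\,dL_s\in L^2(\wt\sF_T).
\]
Theorem \ref{T:2.2} applied to $\eta$ gives $H$ with $\eta=\E[\eta|\wt\sF_0]+\int_0^T H_s\,dB_{L_s}$. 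Set $Z_s:=\1_{\{R_s=0\}}H_s$. This does not change the stochastic integral, since $dL_s$ does not charge $\{R_s>0\}$ by \eqref{e:2.3}, and it is $\wt\bF$-progressive because $\{R_s=0\}\in\sF'_s$. Then define
\[
Y_t:=\E[\eta|\wt\sF_t]+\int_0^t f_1\,ds+\int_0^t f_2\,dL_s .
\]
This pair $(Y,Z)$ lies in $\sM^2[0,T]$ and solves the BSDE with frozen drivers. Call this map $\Phi(y,z)=(Y,Z)$.

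\textbf{Contraction estimate.} Take two inputs and write $\wh Y,\wh Z,\wh f_i$ for the differences, so that $\wh Y_T=0$. Apply Itô's formula to $e^{\beta t}|\wh Y_t|^2$ on $[t,T]$. The stochastic integral term is a true martingale, by the argument of step (ii) in Proposition \ref{propBSDE}. Taking expectations at $t=0$ gives
\[
\E\Big[\beta\int_0^T e^{\beta s}|\wh Y_s|^2ds+\int_0^Te^{\beta s}|\wh Z_s|^2dL_s\Big]
\le \E\Big[\int_0^T 2e^{\beta s}|\wh Y_s||\wh f_1|\,ds+\int_0^T2e^{\beta s}|\wh Y_s||\wh f_2|\,dL_s\Big].
\]
Convert $dL_s$ to $\kappa^{-1}\1_{\{R_s=0\}}ds$ via \eqref{e:2.3}, and bound each $|\wh f_i|$ by $C_0(|\wh y|+|\wh z|+\sW_2)$ using \eqref{e:4.5a}. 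The Wasserstein term is controlled by \eqref{e:3.2}:
\[
\sW_2\big(\bP_{(y_s,z_s)},\bP_{(\wt y_s,\wt z_s)}\big)^2\le\E\big[|\wh y_s|^2+|\wh z_s|^2\big].
\]
Now apply Young's inequality $2ab\le\eps^{-1}a^2+\eps b^2$. Use the $dt$-integral of $|\wh z|^2$ and \eqref{e:4.9a} to rewrite $\E\int e^{\beta s}|\wh z|^2ds=\kappa\E\int e^{\beta s}|\wh z|^2dL_s$; this identity holds with the weight $e^{\beta s}$ as well, and it is exactly where the normalization $Z=\1_{\{R=0\}}Z$ in $\sM^2$ is needed. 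The result is
\[
(\beta-C\eps^{-1})\,\E\!\int e^{\beta s}|\wh Y|^2ds+\E\!\int e^{\beta s}|\wh Z|^2dL_s\le C'\eps\,\|(\wh y,\wh z)\|^2_{\sM^2_\beta},
\]
with $C,C'$ depending only on $C_0$ and $\kappa$. Choose $\eps$ so that $C'\eps\le1/4$, then $\beta$ large, to get $\|\Phi(y,z)-\Phi(\wt y,\wt z)\|\le\frac12\|(\wh y,\wh z)\|$.

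\textbf{Conclusion.} Banach's fixed point theorem yields a solution, and a fixed point is precisely an $L^2$ adapted solution. For uniqueness, any second solution $(\wt Y,\wt Z)\in\sM^2$ is also a fixed point, so $\|(\wh Y,\wh Z)\|_{\sM^2_\beta}=0$. This gives $\E\int|\wh Z|^2dL=0$ and $\wh Y=0$ for $ds\times d\bP$-a.e. $(s,\omega)$. Continuity of $Y$ then upgrades this to indistinguishability: plugging back into \eqref{e:BSDE2} shows $\wh Y_t=0$ for every $t$ a.s.

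\textbf{Main obstacle.} The delicate point is the $Z$-dependence of the $dt$-driver $h_1$. The norm only controls $Z$ in $dL$, so the bound must pass through \eqref{e:4.9a}, and that identity requires $Z$ to vanish on $\{R>0\}$. The map $\Phi$ must therefore explicitly produce $Z$ of that form, which is why we multiply $H$ by $\1_{\{R_s=0\}}$. We also need to check carefully that the weighted martingale term has zero mean.
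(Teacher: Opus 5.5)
Your proof is correct and follows the same route as the paper: a Picard map built from the representation Theorem~\ref{T:2.2}, It\^o's formula for $e^{\beta s}|\wh Y_s|^2$, and a contraction in $\|\cdot\|_{\sM^2_\beta[0,T]}$. Two of your details are cleaner than the paper's.

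First, setting $Z:=\1_{\{R_s=0\}}H$ makes explicit why $\Phi$ maps into $\sM^2[0,T]$. The paper asserts this without normalizing $Z$, and the normalization matters because $h_1$ depends on $z$ under $ds$. Second, keeping $\beta\int_0^T e^{\beta s}|\wh Y_s|^2ds$ on the left yields the contraction directly. This avoids the paper's sup-estimate via BDG and the bound $\int_0^T\le T\sup$.
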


\pf      Let  $\xi \in L^2 (\wt\sF_T)$.    Given $(y_t, z_t)\in  \sM^2 [0,T]$,
 consider the following BSDE:
\begin{eqnarray}\label{e:3.19}
 d Y_t   =&h_1  (t, y_t, z_t,  \bP_{(y_t,z_t)} ) dt + h_2 (t, y_t, z_t,  \bP_{(y_t,z_t)} ) d L_t   +Z_t d B_t   \quad \hbox{with }  \ 
Y_T  =   \xi   .
\end{eqnarray}
 Define 
 $$
 \eta = \xi- \int_0^T h_1(s, y_s, z_s,    \bP_{(y_s, z_s)} ) ds-\int_0^T  h_2 (s, y_s, z_s,    \bP_{(y_s, z_s)} ) dL_s ,
 $$
 which is $\wt \sF_T$-measurable. 
By Hypothesis  \ref{HP-BSDE},  \eqref{e:2.3}, \eqref{e:3.2} and \eqref{e:4.9a}, 
\begin{eqnarray*}
\E [\eta^2]
 &\leq& 3\E [ |\xi|^2] + 3T \E \int_0^T |h_1 (t, y_t, z_t,    \bP_{(y_t, z_t)} )  |^2 dt 
 +  \frac {3T} {\ka}\E  \( \int_0^T |h_2(t, y_t, z_t, \bP_{(y_t,z_t)}  ) |^2 dL_t \)    \\
&\leq & 3\E [ |\xi|^2] + 3T  C_1  \E \int_0^T \(|h_1(t, {\bf 0} , \delta_{\bf 0})|^2+|y_t| ^2 + |z_t|^2\) dt \\
&& +  \frac {3TC_1 } {\ka }    \Big(    \kappa^{-1} \E  \int_0^T \( |h_2  (t, {\bf 0} , \delta_{\bf 0}) |^2 + |y_t|^2\) dt 
+  \EE \int_0^T   |z_t|^2  dL_t  \Big) \\
&<& \infty.
\end{eqnarray*}
 Thus by  Theorem \ref{T:2.2}, there exists an $ \R^{n\times d}$-valued   
 $\wt \bF$--progressively measurable process $Z$
  with   $\E \int_0^T  |Z_s|^2 d L_s  <\infty$ so that
 $\eta = \E [\eta|  \wt \sF_0 ] + \int_0^T Z_s d B_{L_s    }$.

Define 
$$ Y_t= \E [ \eta | \wt \sF_0] 
+ \int_0^t h_1 (s, y_s, z_s ,  \bP_{(y_s,z_s)}) ds + \int_0^t  h_2 (s, y_s, z_s ,  \bP_{(y_s,z_s)}) dL_ s  +   \int_0^t  Z_s dB_{L_ s   }.
$$
Then by the same calculation as above, $\E \int_0^T |Y_t|^2 dt <\infty$. Moreover, for $t\in [0, T]$, 
\begin{eqnarray*}
 Y_t &=& \eta -  \int_0^T Z_s d B_{L_s    }  + 
 \int_0^t h_1 (s, y_s, z_s ,  \bP_{(y_s,z_s)}) ds + \int_0^t  h_2 (s, y_s, z_s ,  \bP_{(y_s,z_s)}) dL_ s  +   \int_0^t  Z_s dB_{L_ s   } \\
 &=&  \xi  -  \int_t^T h_1 (s, y_s, z_s ,  \bP_{(y_s,z_s)}) ds - \int_t^T  h_2((s, y_s, z_s ,  \bP_{(y_s,z_s)}) dL_ s 
 -  \int_t^T  Z_s dB_{L_ s   } .
 \end{eqnarray*}
 Thus  $(Y_t, Z_t) \in  \sM^2 [0,T]$ solves BSDE  \eqref{e:3.19}. 
Suppose that $(Y'_t, Z'_t) \in  \sM^2 [0,T]$ is another solution of BSDE  \eqref{e:3.19}.
Then $d(Y_t-Y'_t)= (Z_t-Z'_t) dB_{L_t   }$ with $Y_T-Y'_T=0$. 
  It follows  that $\E \int_0^T(Z_s-Z_s')^2 d L_ s =0$ and, 
  consequently,  $Y_t=Y_t'$ for all $t\in [0, T]$ $\bP$-a.s. as both $Y_t$ and $Y_t'$ are continuous processes. 
  This shows that BSDE  \eqref{e:3.19}  has a unique $L^2$ adapted  solution.
   
\medskip
 
 The above defines a map    $\Phi:  \sM^2  [0,T]\rightarrow  \sM^2   [0,T] $ by sending $(y,z)$ to the unique $L^2$ adapted  
 solution $ (Y,Z)$  of \eqref{e:3.19}.   We next show that it is a contractive map with respect to the Banach norm $\| \cdot \|_{ \mathcal  M_{\beta} [0,T]}$  for   $\beta >1$ sufficiently large.  
 For $(y,z)$ and $(\wt y, \wt z)\in  \sM^2 _\be [0,T]$, let $(Y, Z)= \Phi (y, z)$ and $(\wt Y, \wt Z) = \Phi ( \wt y, \wt z)$.
 For notational simplification, let
 $$
 \wh Y_t:= Y_t - \wt Y_t, \quad \wh Z_t:= Z_t - \wt Z_t, \quad   \wh y_t:= y_t - \wt y_t, \quad \wh z_t:=z_t - \wt z_t,
 $$
 and
 $$
 \wh h_1 (t):=  h_1(t,   y_t,   z_t , \bP_{ (y_t,z_t)}  ) - h_1(t, \wt y_t, \wt z_t, \bP_{ (\wt y_t, \wt z_t)} )  , 
  \quad  \wh h_2 (t):=  h_2(t,   y_t,   z_t , \bP_{ (y_t,z_t)}  )  - h_2(t, \wt y_t, \wt z_t, \bP_{ (\wt y_t, \wt z_t)}   )  .
 $$
   By \eqref{e:3.19},
 $$
 \wh Y_t = - \int_t^T \wh h_1 (s) ds - \int_t^T \wh h_2 (s) dL_s    - \int_t^T \wh Z_s dB_{L_ s    } .
$$
 Let $\beta>1$, whose value will be taken to be sufficiently large later. 
 By Ito's formula, 
 $$ 
 d( e^{\be s} |\wh Y_s  |^2 ) = \beta  e^{\be s}|\wh Y_s|^2 ds 
 +  e^{\be s}  \( 2 \wh Y_s d\wh Y_s + d \< \wh Y, \wh Y\>_s\).
 $$  
  Since $\wh Y_T =0$,  Integrating the above from $t$ to $T$  yields
  \begin{eqnarray*}
 && e^{\beta t}  |\wh Y_t|^2  + \int_t^T  e^{\be s}  |\wh Z_s|^2 dL_ s    \\
 &=&  -  \int_t^T e^{\be s} \left(  \beta |\wh Y_s|^2 + 2\wh Y_s \wh h_1 (s)  \right) ds 
 -  2 \int_t^T   e^{\be s}  \wh Y_s  \wh h_2 (s)  dL_  s
 - 2 \int_t^T   e^{\be s} \wh Y_s \wh Z_s  dB_{L_ s    } .
   \end{eqnarray*}
  Let 
$M_t:=2  \int_t^T   e^{\be s} \wh Y_s \wh Z_s  dB_{L_ s    } $.
By the same argument as that for \eqref{e:4.10a}, we see  $\{M_t; t\in [0, T] \}$ is a uniformly integrable martingale.
  From the last display, we have by Hypothesis  \ref{HP-BSDE}, \eqref{e:2.3} and \eqref{e:3.2}, 
  \begin{eqnarray}\label{esti-norm1}
&& e^{\beta t}  |\wh Y_t|^2  + \int_t^T  e^{\be s}  |\wh Z_s|^2 dL_ s       \nonumber  \\
  &\le&\int^T_t\big(-\beta  | \wh Y_s |^2+C_2 |\wh{Y}_s | \( |\wh{y}_s| + |\wh{z}_s|
    + \sW_2(\bP_{(y_t, z_t)}, \bP_{(\wt y_t,\wt z_t)} ) \)\big)e^{\be s}ds \nonumber\\
&&+\int^T_t C_2 |\wh{Y}_s| \( |\wh{y}_s|+|\wh{z}_s|+ \sW_2(\bP_{(y_t, z_t)}, \bP_{(\wt y_t,\wt z_t)} )  \)e^{\be s} dL_  s
 +  ( M_t -M_T)  \nonumber\\
 &\le&   \int^T_t\(-\beta | \wh Y_s |^2+ C_3 |\wh{Y}_s|   \( |\wh{y}_s| +  |\wh{z}_s|  
 + \sqrt{ \E \left [  |\wh{y}_s|^2   +  |\wh{z}_s|^2 \right] } \)  \) e^{\be s} ds  + (M_t-M_T)   \nonumber \\
&\leq &    \int^T_t \(-\beta  | \wh Y_s |^2+  \beta | \wh Y_s |^2 +
\frac{ C_3^2}{4\beta}      \( |\wh{y}_s| +  |\wh{z}_s |    + \sqrt{ \E \left [  |\wh{y}_s|^2   +  |\wh{z}_s|^2 \right] } \)^2  \)   e^{\be s} ds   
   \nonumber  \\
&& + (M_t-M_T)   \nonumber \\
&\leq &  \frac{3 C_3^2}{4\beta}     \int^T_t   \( |\wh{y}_s|^2 +  |\wh{z}_s|^2  +  \E \left[  |\wh{y}_s|^2    +   |\wh{z}_s|^2  \right] \)
   e^{\be s} ds   + (M_t-M_T) 
 \end{eqnarray}
 Taking expectation on both sides and using the property $\wt z_s = \1_{\{\R_t=0\}} \wt z_s$ and \eqref{e:2.3},
 we get 
   \begin{equation}\label{e:4.19a}
 \E \left[  e^{\beta t}  |\wh Y_t|^2  + \int_t^T  e^{\be s}  |\wh Z_s|^2 dL_ s   \right]    
  \leq \frac{3 C_3^2 (\kappa+1) }{2 \beta} \( \E  \int^T_t     |\wh{y}_s|^2 e^{\be s} ds +
  \E \int_t^T  |\wh{z}_s|^2   e^{\be s} dL_s\) . 
 \end{equation}
  Dropping the first term on left hand side of \eqref{e:4.19a} and then taking $t=0$ gives  
  \begin{equation}\label{e:4.20a}
 \E  \int_0^T  e^{\be s}  |\wh Z_s|^2 dL_ s 
 \leq   \frac{3C_3^2 (\kappa+1)}{2 \beta} \| (\wh y, \wh z)\|_{\sM^2_\beta [0, T]}^2.
  \end{equation}
  On the other hand, by \eqref{esti-norm1} , Doob's $L^2$-maximum inequality and \eqref{e:2.3}, 
 \begin{eqnarray}\label{esti-norm3}
 &&   \EE \left[ \sup_{t\in [0, T]} \left( e^{\beta t} |\wh{Y}|^2_t  \right) \right]  \nonumber\\
&\leq &   \frac{3 C_3^2}{4\beta}  \E   \int^T_0   \( |\wh{y}_s|^2 +  |\wh{z}_s|^2  +  \E [  |\wh{y}_s|^2    +   |\wh{z}_s|^2 ] \)
   e^{\be s} ds      
          + \E \left[ \sup_{t\in [0, T]} | M_t-M_T|  \right]  \nonumber\\
&\leq &   \frac{3C_3}{ 2 \beta}   \E   \int^T_0   \( |\wh{y}_s|^2  +  |\wh{z}_s|^2 \1_{\{R_t=0\}}  \)  e^{\be s} ds      
          + 2 \E \left[ \sup_{t\in [0, T]} | M_t|  \right]  \nonumber\\
 &\leq &  \frac{3 C_3^2 (\kappa+1) }{2 \beta} \| (\wh y, \wh z)\|_{\sM^2_\beta [0, T]}^2 
 +  8 \EE \left[  \left(\int^T_0e^{2\be s} |\wh Y_s |^2  \, |\wh Z_s |^2 dL_s    \right)^{1/2} \right] 
 \nonumber\\
   &\leq &  \frac{3 C_3^2 (\kappa+1) }{2\beta} \| (\wh y, \wh z)\|_{\sM^2_\beta [0, T]}^2 
 +  8  \EE \left[  \left(\sup_{t\in [0, T]} \left(e^{ \be t}|\wh{Y}_t|^2 \right) \int^T_0e^{\be s} |\wh Z_s |^2d  L_s \right)^{1/2} \right] \nonumber\\
 &\leq &  \frac{3 C_3^2 (\kappa+1) }{2\beta} \| (\wh y, \wh z)\|_{\sM^2_\beta [0, T]}^2 
 +   32   \E \int_0^T e^{\be s} |\wh Z_s |^2d  L_s
 +\frac12\EE \left[\sup_{t\in [0, T]}\left(e^{\beta t} |\wh{Y}|^2_t\right) \right], \nonumber\\
 &\leq &  \frac{99 C_3^2 (\kappa+1) }{2 \beta} \| (\wh y, \wh z)\|_{\sM^2_\beta [0, T]}^2  
  +\frac12\EE \left[\sup_{t\in [0, T]}\left(e^{\beta t} |\wh{Y}|^2_t\right) \right].
\end{eqnarray}
where the last inequality is due to  \eqref{e:4.20a}.
Since $\E  \left[\sup_{t\in [0, T]}\left(e^{\beta t} |\wh{Y}|^2_t\right) \right] <\infty$ in view of Proposition
\ref{propBSDE}, we conclude 
\begin{equation}\label{esti-norm4}
\EE \left[ \sup_{t\in [0, T]}\left(e^{\beta t} |\wh{Y}|^2_t\right) \right]
\le    \frac{99 C_3^2 (\kappa+1) }{\beta} \| (\wh y, \wh z)\|_{\sM^2_\beta [0, T]}^2  .
\end{equation}
   Consequently, 
\begin{equation}\label{e:4.11}
\EE \left[ \int_0^T e^{\beta t} |\wh{Y}|^2_t dt  \right]
\leq T \EE \left[ \sup_{t\in [0, T]}\left(e^{\beta t} |\wh{Y}|^2_t\right) \right]
\le  \frac{99 C_3^2 (\kappa+1) T }{\beta} \| (\wh y, \wh z)\|_{\sM^2_\beta [0, T]}^2  .
\end{equation}
   Combining this with \eqref{e:4.20a}, we have
\[\|(\wh{Y},\wh{Z})\|^2_{\cM^2_\be [0, T]} \leq
   \frac{101 C_3^2 (\kappa+1) (T +1)}{\beta} \| (\wh y, \wh z)\|_{\sM^2_\beta [0, T]}^2  .
   \]
   Taking $\be >1$ sufficiently large so that $ \frac{101 C_3^2 (\kappa+1) (T +1)}{\beta} \leq 1/4$, 
   we get 
   $$
   \|(\wh{Y},\wh{Z})\|_{\cM^2_\be [0, T]} \leq
   \tfrac12  \| (\wh y, \wh z)\|_{\sM^2_\beta [0, T] }  .
   $$
This shows that   $\Phi$ is a contraction map 
   on the Banach space $( \sM^2 [0, T],  \| \cdot \|_{\sM^2 _{\be} [0, T] } ) $. 
   Hence $\Phi$ has a unique fixed point  $(\bar Y, \bar Z)$ in $\sM^2 [0, T]$,
 which is the unique $L^2$ adapted  solution to  the  MF-BSDE  \eqref{e:BSDE2}.   
   \qed

 \section{Control problem for  MF-SDEs}\label{S:5}

 We recall the following definition from \cite{ZhangChen2024SICON1, ZC4}. 

\begin{definition}\label{D:2.1}  \rm
 Let $U$ be a  non-empty convex    subset (i.e., an interval) of $\R^n$. For each $T>0$ and  $a\geq 0$, denote by $\mathcal{U}' [0, T]$ the set of  
    $\bF'$-progressively measurable processes  $\{u (t, \omega );  t\in [0, T]\}$ defined on $   [0, T] \times \Omega $  taking values in $U$ so that 
   $\E \int_0^T |u_t|^2 dt <\infty$, 
    where $\bF'$ is the natural augmented filtration generated by the sub-diffusion $B_{L_t}$.

We call $u\in \mathcal{U}'[0, T] $  an admissible control.
   Note that  the space $\mathcal{U}'[0, T]$ depends on the convex set $U$ but for notational convenience we do
  not include $U$ in its notation. In the following we call $U$ a control domain. Observe that $\mathcal{U}' [0, T]$ is convex as $U$ is convex. 
 \end{definition}

\smallskip

\medskip

Given $u\in \mathcal{U}'  [0, T]$  and $x_0\in \R^n $, the state process $X=X^u$ is described by  the following  
 mean-field SDE on $\R^n$ driven 
 by the anomalous sub-diffusion $B_{L_t}$ for $t\in [0, T]$:    
  \begin{equation}\label{e:SDE}
\left\{\begin{aligned}
 dX_t =&\ \EE ^\prime \left[ b(t,   X_  t , X_t^\prime, u_t) \right] dt  
 +\ \EE ^\prime \left[ \delta (t,   X_  t , X_t^\prime, u_t) \right] dL_t
+\EE^ \prime \left[ \sigma(  t,   X_  t , X_t^\prime, u_t   ) \right] dB_{L_t}  ,   \\
 X_0=&\ x_0 , 
\end{aligned}
\right.
\end{equation}
where $\varphi: \Omega \times [0,T]\times\RR^n \times\RR^n \times U  \rightarrow  \RR^n  $,
with $\varphi = b, \delta , \sigma$,  is such that 
 for each fixed $(x, y, u)\in \R^n \times \R^n \times U$, $(\omega, t) \mapsto \varphi  (\omega, t, x, y, u)$
  is $\bF'$-progressive measurable process. Here 
  $X'$ is an independent copy of $X$ and $\E'$  is the expectation taken under the law of $X'$, that is, 
 $\EE ^\prime \left[ \varphi (t,   X_  t , X_t^\prime, u_t) \right]  = \int_{\R^n} \varphi (t, X_t, y, u_t) \bP_{X_t} (dy)$.

   \medskip

 \begin{theorem}\label{T:5.3}
 Suppose that $b(t, x, y, u)$, $\delta (t, x, y, u)$ and $\sigma (t, x, y, u)$ are Borel measurable  functions
 on $[0, \infty) \times \R^n \times \R^n \times U$  and  there is a constant
 $C_0 \geq 1 $ such that  for  any $t\in [0, T] $ and $x_i,  y_i , u \in \R^n$ with $i=1, 2$, 
 \begin{equation} \label{e:5.2}
 | \varphi (t, x_1, y_1, u) - \varphi (t, x_2, y_2, u)|     \leq C_0 \( |x_1 -x_2 | + |y_1-y_2|\), 
   \end{equation}
   and
 \begin{equation}\label{e:5.3}
 | \varphi (t, x_1, y_1, u)  |     \leq C_0 \( 1 + |u|\)
   \end{equation}
  with  $\varphi = b, \delta, \sigma$. 
 Then for every $u\in \cU' [0, T]$ and $x\in \R^n$, the  MF-SDE \eqref{e:SDE} has a unique $L^2$ strong  solution $X$.
   \end{theorem}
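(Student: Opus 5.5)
The plan is to reduce Theorem \ref{T:5.3} to Theorem \ref{exis-unique-SDEs}. Fix $u\in \cU'[0,T]$ and, for $\varphi=b,\delta,\sigma$, define the random coefficient
$$
\wt\varphi(\omega,t,x,\mu):=\int_{\R^n}\varphi(t,x,y,u_t(\omega))\,\mu(dy),\qquad x\in\R^n,\ \mu\in\sP(\R^n).
$$
This is well defined and finite for every $\mu$, because \eqref{e:5.3} makes $\varphi$ bounded in $(x,y)$ for fixed $u$. With this choice, \eqref{e:SDE} is exactly \eqref{SDE} with $\mu=\bP_{X_t}$. It therefore suffices to check that $\wt b,\wt\delta,\wt\sigma$ satisfy Hypothesis \ref{HP0-SDEs} and are $\bF'$-progressively measurable for each fixed $(x,\mu)$.

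\emph{Measurability.} The map $(t,x,y,u)\mapsto\varphi(t,x,y,u)$ is Borel and $u$ is $\bF'$-progressive, so $(\omega,t,y)\mapsto\varphi(t,x,y,u_t(\omega))$ is $\mathrm{Prog}(\bF')\otimes\mathcal B(\R^n)$-measurable. Integrating in $y$ against the fixed measure $\mu$ keeps progressive measurability, by Fubini.

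\emph{Hypothesis \ref{HP0-SDEs}(i).} By \eqref{e:5.3}, $|\wt\varphi(t,{\bf 0},\delta_{\bf 0})|\le C_0(1+|u_t|)$. Using $dL_t\le\kappa^{-1}dt$ from \eqref{e:2.2}, the required expectation is bounded by $C(1+\kappa^{-1})\,\E\int_0^T(1+|u_t|^2)\,dt$, which is finite because $u\in\cU'[0,T]$.

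\emph{Hypothesis \ref{HP0-SDEs}(ii).} This is the main step. Write
$$
|\wt\varphi(t,x_1,\mu_1)-\wt\varphi(t,x_2,\mu_2)|\le|\wt\varphi(t,x_1,\mu_1)-\wt\varphi(t,x_2,\mu_1)|+|\wt\varphi(t,x_2,\mu_1)-\wt\varphi(t,x_2,\mu_2)|.
$$
By \eqref{e:5.2}, the first term is at most $C_0|x_1-x_2|$. For the second term, take any coupling $\pi\in\sC(\mu_1,\mu_2)$. Then it equals $\bigl|\int(\varphi(t,x_2,y_1,u_t)-\varphi(t,x_2,y_2,u_t))\,\pi(dy_1,dy_2)\bigr|\le C_0\int|y_1-y_2|\,d\pi$. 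Taking the infimum over couplings gives a bound $C_0\sW_1(\mu_1,\mu_2)\le C_0\sW_2(\mu_1,\mu_2)$. This bound also holds when the $\mu_i$ lack finite moments, since $\sW_2=\infty$ is then allowed. The Lipschitz constant is uniform in $\omega$ and $u$, which is what the hypothesis requires.

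Theorem \ref{exis-unique-SDEs} then gives a unique $L^2$ strong solution $X\in\sM[0,T]$, with $\E\sup_t|X_t|^2<\infty$. This solution solves \eqref{e:SDE}, since $\E'[\varphi(t,X_t,X'_t,u_t)]=\wt\varphi(t,X_t,\bP_{X_t})$. Uniqueness transfers through the same identity. The only delicate points are the measurability of the composite coefficient and the coupling argument. I expect no real obstacle beyond these: the argument is a verification of the hypotheses of Theorem \ref{exis-unique-SDEs}.
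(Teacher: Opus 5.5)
Your proposal is correct and follows the paper's proof essentially step for step. Both arguments reduce to Theorem \ref{exis-unique-SDEs} via $\wt\varphi(\omega,t,x,\mu)=\int_{\R^n}\varphi(t,x,y,u_t(\omega))\,\mu(dy)$, check Hypothesis \ref{HP0-SDEs}(i) from \eqref{e:5.3} together with $dL_t\le\kappa^{-1}dt$, and use a coupling to get $C_0(|x_1-x_2|+\sW_1(\mu_1,\mu_2))\le C_0(|x_1-x_2|+\sW_2(\mu_1,\mu_2))$. The differences are cosmetic: you split off the $x$-increment by the triangle inequality before coupling, whereas the paper couples both increments at once, and you are more explicit about measurability and well-definedness of $\wt\varphi$.
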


 \pf Let  $u\in \cU' [0, T]$. For $t\geq 0$, $x\in \R^n$ and $\mu \in \sP (\R^n)$, 
  define 
  $$
  \wt \varphi (\omega, t, x, \mu):= \int_{\R^n} \varphi (t, x, y,  u_t (\omega)) \mu (dy) 
   $$
 for $\varphi = b, \delta , \sigma$. 
 Clearly, for  each fixed $x, x \in \R^n$ and    $\mu_1 , \mu_2\in \sP(\R^n)$, 
 $(\omega, t) \rightarrow \varphi (\omega, t, x,  \mu) $    is
   $\bF'$-progressively measurable with  $\varphi = b, \delta,  \sigma $.
  Moreover,  by \eqref{e:2.3}, \eqref{e:5.2} and \eqref{e:5.3},  
  \begin{eqnarray} \label{e:5.4} 
 && \E \left[ \int_0^T \wt b (s, 0, \delta_{0})^2 ds  + \int_0^T \(   
    \wt \delta (s, 0, \delta_{0})^2 + \wt \si  (s, 0,  \delta_{0})^2    \) dL_s  \right]  \nonumber \\
    &\leq & C_0^2 \(1+2\kappa^{-1} \) \E \int_0^T (1 + |u(s)|^2) ds <\infty
  \end{eqnarray}
  and
   \begin{eqnarray*}\label{e:5.5}
  && | \wt \varphi (\omega, t, x_1, \mu_1) -\wt \varphi (\omega, t, x_2, \mu_2)|   \nonumber\\
  &=& \inf_{\pi \in \sG^L (\mu_1, \mu_2)} \Big| \int_{\R^n\times \R^n} \( b(t, x_1, y_1, u(t)) - b(t, x_2, y_2, u(t)) \)  \pi (dy_1, dy_2) \Big| \nonumber\\
  &\leq &\inf_{\pi \in \sG^L (\mu_1, \mu_2)}   \int_{\R^n\times \R^n} |  b(t, x_1, y_1, u(t)) - b(t, x_2, y_2, u(t)) |  \pi (dy_1, dy_2) \nonumber\\
  &\leq &\inf_{\pi \in \sG^L (\mu_1, \mu_2)}   \int_{\R^n\times \R^n} C_0 \( |x_1-x_2|+|y_1 -y_2|  \)  \pi (dy_1, dy_2) \nonumber\\
  &=& C_0 \( |x_1-x_2|+ \sW_1 (\mu_1, \mu_2)  \) \nonumber \\
  &\leq & C_0 \( |x_1-x_2 |+ \sW_2 (\mu_1, \mu_2)  \).
     \end{eqnarray*} 
     Thus $(\wt b, \wt \delta, \wt \sigma)$ satisfy Hypothesis \ref{HP0-SDEs}.

Note  \eqref{e:SDE} can be rewritten as the following form:
  \begin{equation}\label{SDE2}
\left\{\begin{aligned}
dX_t =&   \wt b(t,   X_  t , \bP_{X_t} )  dt  + \wt \delta (t,   X_  t , \bP_{X_t} )  dL_t
+ \wt \sigma(  t,   X_ t , \bP_{X_t}  )   dB_{L_t}  \quad \hbox{for } t\in[0,T],    \\
 X_0=&\ x_0 .
 \end{aligned}
\right.
 \end{equation}
 By   Theorem \ref{exis-unique-SDEs}, \eqref{SDE2} has a unique $L^2$ strong solution, 
 so does \eqref{e:SDE}. \qed

 \medskip
Now suppose $b, \delta$ and $\sigma$ are deterministic. 
 We consider the following cost functional  for control $u\in \cU' [0, T]$:
   \begin{eqnarray}\label{e:5.8}
J(u)&:=& \EE   \bigg[     \int_{0}^{T}\EE'  \left[ f\left  (t, X_t, X_t ^{\pr} ,    u_t   \right) \right]  dt
 + \int_{0}^{T}\EE'  \left[ g \left  (t, X_t, X_t ^{\pr} ,    u_t   \right) \right]  dL_t
+\EE ^{\pr}h(X_T, X_T  ^{\prime}    )  \bigg],
\end{eqnarray}
 where $   f , g   :\;[0,T]\times\R^n  \times \R^n   \times U   \to \R  $ and 
 $   h   :\; \R^n  \times \R^n      \to \R  $.    Note that $J(u)$ depends on the initial value $x_0\in \R^n$ of the state processes,
 which is fixed.

\medskip

An admissible control $u^\ast \in \cU'[0, T]$ is said to be an optimal control for the cost functional
$J$ if 
\begin{equation}\label{control}
 J(u^\ast )  =\inf_{u \in \mathcal{U}' [0, T]} J( u ) . 
\end{equation}

In next section, we study the stochastic maximum principle for the above control problem. 
 
\section{ Stochastic Maximum Principle}\label{S:6} 

 To study the stochastic maximum principle  for \eqref{control}, we need to introduce an adjoint equation,
which is a  MF-BSDE that is a closely related to but is slightly different from  \eqref{e:BSDE}. 
For this, 
 let $    (\bar  {\Omega}, \bar {\mathcal{F} },  \bar{ \mathbb P } ) 
 =   ( \Omega\times \Omega  ,  \mathcal{F} \times \mathcal{F},    \mathbb P \times  \mathbb P  )    $
 be the product   probability space  of $ (\Omega, \mathcal{F}, \mathbb P)$ with itself. 
 We equip this product space with filtration $\bar \bF:=\{ \bar \sF_t:= \wt \sF_t \otimes \wt \sF_t, t\in [0, T]\}$. 
 For a random variable $\xi$ that is originally defined on $\Omega$, it can be naturally as a random variable,
 still denoted by $\xi$, 
 on $\bar \Omega$ by $\xi (\omega, \omega'):= \xi (\omega)$. 
 It also defines an independent copy $\xi'$ on $\bar \Omega$ by $\xi' (\omega, \omega'):= \xi (\omega')$.
 Similar remark applies to random processes as well. 
   For $\theta \in L^1 (  \Omega\times \Omega  ,  \mathcal{F} \times \mathcal{F},    \mathbb P \times  \mathbb P  )$, 
 we denote
 \[ 
 \EE^{\pr} [  \theta(\omega, \cdot)  ] :=\int_{\Omega} \theta(  \omega , \omega^{\pr} ){ \mathbb P }(d\omega') .
 \]
 Clearly, 
 \[
 \bar\E [  \theta ] :=\int_{\bar\Omega}\theta d\bar { \mathbb P } 
 =  \int_{\Omega}  \EE^{\pr} [  \theta(\omega, \cdot) ]    \mathbb P  (d\omega )  =\EE [\EE '  [\theta]]    .   
  \] 

\medskip

Since     the control domain  
  $U\subset \RR $ is convex,  so is $\cU' [0, T]$.  
  In this section,  we establish stochastic maximum principle  for \eqref{control}  using   a  convex variational method.
  Let $0<T<\infty$. 
 Throughout this section, we assume the following Hypothesis  holds. 
 
  \medskip
 
\begin{hypothesis}\label{HP1}  \rm 
 
  The functions $b(t, x,  y,     u)$, $\delta (t, x, y,  u)$,   $\si (t, x, y,  u),$  $f(t, x, y,   u)$  and    $h (  x, y)$
    are deterministic  continuously  
differentiable in $(x, y, u)$ and in $(x, y )$, respectively,
  with bounded and Lipschitz continuous first order partial derivatives. 
 Denote by $C_0>0$ the least bound of all these first order partial derivatives.

\end{hypothesis}

 For $u\in \cU'[0, T] $, denote by 
 $  X^u  $ the unique solution of 
\eqref{e:SDE}  in $\mathcal M [0, T]$.  
In the following, for notational simplicity, 
denote 
\begin{equation}\label{e:Theta}
\Theta^{u}_t := (X^u_t,  (X^u_t)^\prime,  u_t) 
\quad \hbox{ and } \quad 
\widehat\Theta^{u}_t := (  (X^u_t)^\prime,  X^u_t,   (u_t)') .  
\end{equation}

\bigskip
 
For $\bar u, u\in  \cU ' [0, T] $, set  $v:=u-\bar u$. Then  
$$
\bar u+\va v = (1-\eps) \bar u + \eps u \in  \cU ' [0, T]  \qquad \hbox{for every } \eps \in [0, 1].
$$

\begin{lemma}\label{xpi-x} 
Suppose Hypothesis  \ref{HP1}  holds. 
    Then for every $\bar u,v' \in  \cU ' [0, T] $,  
there exits a constant $C$ such that    with  $v= v'-\bar u$,
  \[    \EE \left[  \sup\limits_{0\leq t \leq T}     |X^{ \bar u+\va v }_t -X^{\bar u}_t |^2 \right] \leq C \va ^2   \E \int_0^T |v_s|^2 ds
  \quad \hbox{ for every }   \eps \in (0, 1).
   \]
  \end{lemma}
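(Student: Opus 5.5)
We prove the estimate by a direct stability argument for the difference of the two state equations, closed by Gronwall's inequality. Write $X^\eps:=X^{\bar u+\eps v}$, $\bar X:=X^{\bar u}$, $\wh X_t:=X^\eps_t-\bar X_t$, and let $X'$ denote independent copies on the product space $\bar\Omega$ as in Section \ref{S:6}. Subtracting the two versions of \eqref{e:SDE} gives
$$
\wh X_t=\int_0^t \Delta b(s)\,ds+\int_0^t \Delta\delta(s)\,dL_s+\int_0^t \Delta\sigma(s)\,dB_{L_s},
$$
with, for $\varphi=b,\delta,\sigma$,
$$
\Delta\varphi(s):=\E'\big[\varphi(s,X^\eps_s,(X^\eps_s)',\bar u_s+\eps v_s)-\varphi(s,\bar X_s,\bar X_s',\bar u_s)\big].
$$
By Hypothesis \ref{HP1} the first-order derivatives of $\varphi$ in $(x,y,u)$ are bounded by $C_0$. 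Hence
$$
|\Delta\varphi(s)|\le C_0\big(|\wh X_s|+\E'|\wh X_s'|+\eps|v_s|\big)\le C_0\Big(|\wh X_s|+(\E|\wh X_s|^2)^{1/2}+\eps|v_s|\Big).
$$
This uses the key observation that $\E'|\wh X'_s|=\E|\wh X_s|$, since $X'$ is an independent copy, which is then bounded by the $L^2$ norm.

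The next step is to bound $\phi(t):=\E[\sup_{r\le t}|\wh X_r|^2]$, which is finite by Theorem \ref{T:5.3}/Theorem \ref{exis-unique-SDEs}. Split $|\wh X_r|^2$ into three terms, one for each integral, and treat them as follows.

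The $ds$ term is handled by Cauchy--Schwarz, giving $T\int_0^t|\Delta b|^2ds$.

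For the $dL_s$ term, use \eqref{e:2.2}, namely $dL_s\le\kappa^{-1}ds$, together with Cauchy--Schwarz. This gives $\kappa^{-2}T\int_0^t|\Delta\delta|^2ds$.

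For the stochastic integral, apply Doob's $L^2$-maximal inequality (or BDG) for the continuous $\wt\bF$-martingale $\int_0^\cdot\Delta\sigma\,dB_{L_s}$, whose quadratic variation is $\int_0^t|\Delta\sigma|^2dL_s\le\kappa^{-1}\int_0^t|\Delta\sigma|^2ds$.

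Collecting the three terms yields
$$
\phi(t)\le C\int_0^t\big(\E|\wh X_s|^2+\eps^2\E|v_s|^2\big)ds\le C\int_0^t\phi(s)\,ds+C\eps^2\E\int_0^T|v_s|^2ds .
$$
Gronwall's inequality then gives $\phi(T)\le Ce^{CT}\eps^2\E\int_0^T|v_s|^2ds$, and the resulting constant depends only on $C_0,\kappa,T$, not on $\eps$.

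The only delicate point is the mean-field term. One must make sure the Lipschitz bound in the $y$-variable is taken inside $\E'$ before taking $\E$, so that it reduces to $\E|\wh X_s|^2$ and does not create a supremum inside a law. Beyond that, the argument only uses the drift bound $dL_s\le\kappa^{-1}ds$ to convert all $dL$ integrals into $ds$ integrals, and the rest is routine.
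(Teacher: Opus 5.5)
Your argument is correct and is essentially the paper's own proof. Both subtract the two state equations, bound the coefficient differences using the bounded first-order derivatives (with $\E'|\wh X'_s|=\E|\wh X_s|$ handling the mean-field term), convert the $dL_s$ integrals into $ds$ integrals via $dL_s\le\kappa^{-1}ds$, control the stochastic integral by Doob/BDG, and close with Gronwall. One point you could state explicitly is that $\bar u+\eps v=(1-\eps)\bar u+\eps v'\in\cU'[0,T]$, so $X^{\bar u+\eps v}$ is well defined with finite $\E[\sup_t|\cdot|^2]$, which is what makes the Gronwall step legitimate.
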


\pf  Note that Hypothesis \ref{HP0-SDEs} holds under the assumption of Hypothesis  \ref{HP1}. Thus 
by Theorem \ref{exis-unique-SDEs}, $ \EE \Big[  \sup\limits_{0\leq t \leq T}     |X^{ \bar u+\va v }_t -X^{\bar u}_t |^2 \Big] 
<\infty.$  
 By   Burkholder-Davis-Gundy's   inequality, we have
 \begin{eqnarray*}
 && \EE \left[ \sup\limits_{0\leq s \leq t}   |X^{ \bar u+\va v } _s   -X^ {\bar u} _s  |^2  \right] \nonumber\\
& \leq &C_1\EE  \bigg[  \(\int_0^t \EE^\prime  |b(s,  \Theta^{\bar u + \eps v}_s) 
-b(s,   \Theta^{\bar u}_s )| ds\)^2 
 + \(\int_0^t \EE^\prime  |\delta(s,  \Theta^{\bar u + \eps v}_s) 
 -\delta(s,   \Theta^{\bar u}_s )| dL_s\)^2
\nonumber\\
&&+  \int_0^t  \EE^\prime   |\si(s,  \Theta^{\bar u + \eps v}_s) 
-\si(s,   \Theta^{\bar u}_s )|  ^2 d\langle B_{L_s}\rangle   \bigg] \nonumber\\
  & \leq &  C_2  \EE \left[ \(  \int_0^t   \left( \EE^\prime |   (X^{\bar  u+\va v } _s)^\prime  -(X^ {\bar  u } _s)^\prime     |+     |   X^{\bar  u+\va v } _s  -X^ {\bar  u } _s     |+ \va  | v_s|   \right)    ds \)^2 \right] \nonumber\\
 && +  C_2   \EE  \left[ \int_0^t \( \EE^\prime |   (X^{\bar  u+\va v } _s)^\prime  -(X^ {\bar  u } _s)^\prime     |+  |   X^{ \bar  u+\va v } _s  -X^ {\bar   u }_s      |+ \va  | v_s|      \)^2 ds  \right]   \nonumber\\
& \leq &C_3      \int_0^t    \EE  \left[  \sup \limits_{0\leq s\leq t }      |   X^{ \bar  u+\va v } _s  -X^ {\bar  u } _s     |^2 \right]     ds   
 +\va ^2 \,  \E \int_0^T      |v_s|^2ds.\nonumber 
\end{eqnarray*}
 The desired inequality now follows from the Gronwall's inequality.
\qed 

\medskip

We first introduce the following variational equations for an $\R^n$-valued process 
$V _t:=V^{  \bar u, v}_t$ with $\ t\in[0,T] $.

\begin{eqnarray}\label{e:V}
\left\{\begin{aligned} d V _t  =
& \EE^\prime  \left[  \nabla_x b (t,  \Theta^{\bar u}_t )   V_t
+ \nabla_y b(t,  \Theta^{\bar u}_t ) (V_t )^\prime 
  +\nabla_u  b (t,  \Theta^{\bar u}_t )  v_t  \right] dt \\
 &     +  \EE^\prime  \left[  \nabla_x   \delta  (t, \Theta^{\bar u}_t ) V_t  
+  \nabla_y  \delta  (t,  \Theta^{\bar u}_t )  \cdot (V_t )^\prime 
    +\nabla_u b (t,  \Theta^{\bar u}_t )  v_t   \right] dL_t  \\
  &+  \EE^\prime  \left[    \nabla_x    \sigma(t,  \Theta^{\bar u}_t ) V_t    
+   \nabla_y  \sigma(t,   \Theta^{\bar u}_t )   (V_t )^\prime 
   +\nabla_u   \sigma (t,  \Theta^{\bar u}_t ) v_t  \right]      dB_{L_ t},   \\
V_0  = &  0 .\\
\end{aligned}
\right.
\end{eqnarray}
 
 Under Hypothesis \ref{HP1}, Hypothesis \ref{HP0-SDEs} holds. So we know from Theorem \ref{exis-unique-SDEs},
 that 
  the equation  MF-SDE  \eqref{SDE} together with \eqref{e:V} combined has a unique $L^2$-strong solution
  $(X^{\bar u}, V)  \in \sM [0, T]$ (as an $\R^{n+n}$-valued $\bF'$-progressively measurable process).
   It   has the property that 
 \begin{equation}\label{e:Vb}
 \EE  \left[ \sup\limits_{0\leq t\leq T  } | V _t|^2 \right]  < \infty.
 \end{equation} 
  
  \bigskip

\begin{lemma}\label{xva2}
 Suppose the Hypothesis  \ref{HP1}  holds and let
\[
\wt{X}^{\va}_t :=\frac{X^{\bar u+\va v}_t-X^{{\bar u} }_t}{\va }-V_t.\quad    \]
Then there is a constant $C>0$ so that  
$$
  \E  \left[ \sup\limits_{0\leq t\leq T}  |  \wt{X}^\va_t|^2 \right]  \leq C \,  \eps^2
  \quad \hbox{for } \eps \in (0, 1), 
$$
 \end{lemma}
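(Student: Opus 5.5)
Write $X^\eps:=X^{\bar u+\eps v}$, $\bar X:=X^{\bar u}$ and $\Theta^\eps_t:=\Theta^{\bar u+\eps v}_t$. The plan is to derive a linear mean-field SDE for $\wt X^\eps$ driven by $dt$, $dL_t$ and $dB_{L_t}$, with a remainder of order $\eps$, and then close with a Gronwall argument in the same way as in Lemma \ref{xpi-x}.

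\emph{Step 1 (first-order expansion).} For $\varphi=b,\delta,\sigma$, the mean value theorem gives
\[
\frac{\varphi(t,\Theta^\eps_t)-\varphi(t,\Theta^{\bar u}_t)}{\eps}
=A^{\varphi,\eps}_x(t)\,\frac{X^\eps_t-\bar X_t}{\eps}+A^{\varphi,\eps}_y(t)\,\frac{(X^\eps_t)'-\bar X_t'}{\eps}+A^{\varphi,\eps}_u(t)\,v_t ,
\]
where $A^{\varphi,\eps}_\bullet(t):=\int_0^1\nabla_\bullet\varphi\bigl(t,\Theta^{\bar u}_t+\lambda(\Theta^\eps_t-\Theta^{\bar u}_t)\bigr)\,d\lambda$ for $\bullet=x,y,u$. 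Substituting $\frac{X^\eps-\bar X}{\eps}=\wt X^\eps+V$ and subtracting \eqref{e:V}, each coefficient becomes
\[
\EE'\bigl[A^{\varphi,\eps}_x\wt X^\eps_t+A^{\varphi,\eps}_y(\wt X^\eps_t)'\bigr]+\EE'\bigl[\rho^{\varphi,\eps}_t\bigr],
\]
with remainder
\[
\rho^{\varphi,\eps}_t:=\bigl(A^{\varphi,\eps}_x-\nabla_x\varphi(\Theta^{\bar u}_t)\bigr)V_t+\bigl(A^{\varphi,\eps}_y-\nabla_y\varphi(\Theta^{\bar u}_t)\bigr)V'_t+\bigl(A^{\varphi,\eps}_u-\nabla_u\varphi(\Theta^{\bar u}_t)\bigr)v_t .
\]
Here I read the $dL_t$-coefficient of \eqref{e:V} as containing $\nabla_u\delta$; the $\nabla_u b$ appearing there is a typo.

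\emph{Step 2 (the remainder is $O(\eps)$).} This is the main obstacle. The derivatives are $C_0$-Lipschitz by Hypothesis \ref{HP1}, so
\[
|A^{\varphi,\eps}_\bullet-\nabla_\bullet\varphi(\Theta^{\bar u}_t)|\le C_0\bigl(|X^\eps_t-\bar X_t|+|(X^\eps_t)'-\bar X_t'|+\eps|v_t|\bigr).
\]
Hence $|\rho^{\varphi,\eps}_t|$ is bounded by products such as $\bigl(|X^\eps_t-\bar X_t|+\eps|v_t|\bigr)\bigl(|V_t|+|V'_t|+|v_t|\bigr)$. Squaring and integrating in $t$ produces fourth-moment-type quantities such as $\EE\int_0^T|v_t|^2|V_t|^2dt$ and $\EE\int_0^T|X^\eps_t-\bar X_t|^2|v_t|^2dt$. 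These are not controlled by Lemma \ref{xpi-x} and \eqref{e:Vb} alone.

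I would try two routes. The first is to use boundedness of the derivatives by $C_0$, which gives $|A^{\varphi,\eps}_\bullet-\nabla_\bullet\varphi|\le 2C_0$. Combined with dominated convergence and Lemma \ref{xpi-x}, this yields $\EE\int_0^T|\rho^{\varphi,\eps}_t|^2dt\to0$, i.e.\ $o(1)$. The second route is to obtain the $O(\eps^2)$ rate by upgrading Lemma \ref{xpi-x} and \eqref{e:Vb} to $L^4$ estimates via BDG. That requires assuming $\EE\int_0^T|v_t|^4dt<\infty$, for instance a bounded control domain $U$ or a bounded $v$, and then applying Cauchy--Schwarz:
\[
\EE\int_0^T|X^\eps_t-\bar X_t|^2|V_t|^2dt\le\Bigl(\EE\sup_t|X^\eps_t-\bar X_t|^4\Bigr)^{1/2}\Bigl(\EE\sup_t|V_t|^4\Bigr)^{1/2}=O(\eps^2).
\]
The products involving $v_t$ are handled the same way. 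The $\EE'$-terms and the primed copies are treated with Jensen's inequality, using that $X'$ has the same law as $X$.

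\emph{Step 3 (Gronwall).} Estimate $\EE\sup_{s\le t}|\wt X^\eps_s|^2$ exactly as in the proof of Lemma \ref{xpi-x}. Use the Cauchy--Schwarz inequality for the $dt$ term. Use \eqref{e:2.3} to replace $dL_s$ by $\kappa^{-1}ds$, and use BDG with $d\<B_L\>_s=dL_s\le\kappa^{-1}ds$ for the stochastic integral. The linear terms are bounded through $|A^{\varphi,\eps}_\bullet|\le C_0$ and $\EE'|(\wt X^\eps_s)'|^2=\EE|\wt X^\eps_s|^2$, which gives
\[
\EE\sup_{s\le t}|\wt X^\eps_s|^2\le C\int_0^t\EE\sup_{r\le s}|\wt X^\eps_r|^2\,ds+C\sum_{\varphi}\EE\int_0^T|\rho^{\varphi,\eps}_s|^2ds .
\]
The left side is finite by Theorem \ref{exis-unique-SDEs} and \eqref{e:Vb}. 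Gronwall's inequality then turns the $O(\eps^2)$ bound on the remainder from Step 2 into the stated estimate.
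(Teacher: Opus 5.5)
Your Steps 1 and 3 match the paper's argument. The paper interpolates the $x$-, $y$- and $u$-slots one at a time rather than simultaneously, then uses BDG, \eqref{e:2.3} and Gronwall. You are also right that the $\nabla_u b$ in the $dL_t$-line of \eqref{e:V} should be $\nabla_u\delta$.

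The obstruction you raise in Step 2 is genuine, and it is a gap in the paper, not in your reasoning. The paper asserts that the bounded Lipschitz derivatives, Lemma \ref{xpi-x} and \eqref{e:Vb} bound the remainder by $C_2\eps^2\EE\int_0^T(|V_s|^2+|v_s|^2)\,ds$. But the Lipschitz bound yields exactly the products you list, $|X^\eps_s-\bar X_s|\,|V_s|$, $\eps|v_s|\,|V_s|$ and $\eps|v_s|^2$, and their squares are not controlled by second moments.

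In fact the lemma is false under Hypothesis \ref{HP1} with only $\EE\int_0^T|v_t|^2dt<\infty$. Take $n=d=1$, $U=\R$, $T=2$, $b(t,x,y,u)=\sqrt{1+u^2}$, $\delta=\sigma=0$ and $f=g=h=0$; this satisfies Hypothesis \ref{HP1} and \eqref{e:5.2}--\eqref{e:5.3}. Let $\bar u=0$ and $v_t=\xi\1_{[1,2]}(t)$, where $\xi$ is $\sF'_1$-measurable with $\EE\xi^2<\infty=\EE\xi^4$ (a suitable function of $B_{L_1}$, whose law is atomless). Since $\nabla_u b(t,x,y,0)=0$, we get $V\equiv 0$, and
\[
\wt X^\eps_2=\frac{\sqrt{1+\eps^2\xi^2}-1}{\eps}=\frac{\eps\,\xi^2}{1+\sqrt{1+\eps^2\xi^2}},
\qquad
\frac{\EE|\wt X^\eps_2|^2}{\eps^2}=\EE\Big[\frac{\xi^4}{\big(1+\sqrt{1+\eps^2\xi^2}\big)^2}\Big]\uparrow\frac{\EE\xi^4}{4}=\infty
\]
as $\eps\downarrow 0$. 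So no constant $C$ exists.

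Consequently neither of your routes proves the statement as written, and no argument could. Your closing sentence in Step 3 claims the stated estimate, but that only follows under route 2's extra hypothesis; you should say explicitly which version you are proving.

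Route 2 is a correct proof of the corrected lemma. Assume $\EE\int_0^T|v_t|^4dt<\infty$ (for instance, $U$ bounded). BDG with exponent $4$ and Gronwall, after a stopping-time localization so the fourth moments are finite, give $\EE\sup_t|X^\eps_t-\bar X_t|^4=O(\eps^4)$ and $\EE\sup_t|V_t|^4<\infty$. Cauchy--Schwarz then makes every remainder term $O(\eps^2)$.

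Route 1 gives $\EE\sup_t|\wt X^\eps_t|^2\to 0$ under the paper's own hypotheses. This qualitative version is all the paper uses later: in the proof of Theorem \ref{Necessaryconv}, the lemma only identifies $V$ as the $L^2$-derivative of $X^{\bar u+\eps v}$ at $\eps=0$ when passing to the limit in \eqref{dcostfunc}.
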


 \bigskip

\noindent{\bf Proof.}     It follows from (\ref{e:SDE})  
that $\wt X^\eps_0 =0$ and
\begin{eqnarray*}
  d\wt{X}^\va_t
  &=&\bigg(\eps^{-1}  \E'  \int_0^1 \frac{d}{d\lambda} 
  \Big(  b(t, X^{\bar u}_t + \lambda \eps (V_t + \wt X^\eps_t), (X^{\bar u+\eps v}_t)', \bar u_t + \eps v_t)  
   \\
  && \qquad + b(t, X^{\bar u }_t, (X^{\bar u}_t )' + \lambda \eps (V_t + \wt X^\eps_t)',  \bar u_t + \eps v_t)  
  + b(t, X^{\bar u }_t, (X^{\bar u }_t)', \bar u_t +  \lambda \eps v_t) \Big) d\lambda   \\
  && \quad -\E' \Big[ \nabla_x b (t, \Theta^{\bar u}_t) V_t + \nabla_y b (t, \Theta^{\bar u}_t) (V_t)'   
   + \nabla_u b (t, \Theta^{\bar u}_t) v_t \Big] \bigg) dt \\
   && + \bigg( \eps^{-1} \E'  \int_0^1 \frac{d}{d\lambda} 
   \Big(  \delta (t, X^{\bar u}_t + \lambda \eps (V_t + \wt X^\eps_t), (X^{\bar u+\eps v}_t)', \bar u_t + \eps v_t)  
   \\
  && \qquad + \delta (t, X^{\bar u }_t, (X^{\bar u}_t )' + \lambda \eps (V_t + \wt X^\eps_t)',  \bar u_t + \eps v_t)  
  + \delta (t, X^{\bar u }_t, (X^{\bar u }_t)', \bar u_t +  \lambda \eps v_t) \Big) d\lambda   \\
  && \quad -\E' \Big[ \nabla_x \delta (t, \Theta^{\bar u}_t) V_t + \nabla_y \delta (t, \Theta^{\bar u}_t) (V_t)'   
   + \nabla_u \delta (t, \Theta^{\bar u}_t) v_t \Big] \bigg) dL_t \\
  && + \bigg(  \eps^{-1} \E'  \int_0^1 \frac{d}{d\lambda} 
   \Big(  \sigma (t, X^{\bar u}_t + \lambda \eps (V_t + \wt X^\eps_t), (X^{\bar u+\eps v}_t)', \bar u_t + \eps v_t)  
   \\
  && \qquad + \sigma (t, X^{\bar u }_t, (X^{\bar u}_t )' + \lambda \eps (V_t + \wt X^\eps_t)',  \bar u_t + \eps v_t)  
  + \sigma (t, X^{\bar u }_t, (X^{\bar u }_t)', \bar u_t +  \lambda \eps v_t) \Big) d\lambda   \\
  && \quad -\E' \Big[ \nabla_x \sigma (t, \Theta^{\bar u}_t) V_t + \nabla_y \sigma (t, \Theta^{\bar u}_t) (V_t)'   
   + \nabla_u \sigma (t, \Theta^{\bar u}_t) v_t \Big] \bigg) dB_{L_t} \\
 \end{eqnarray*}
By Burkholder-Davis-Gundy;s inequality and \eqref{e:2.3}, for $t\in [0, T]$, 
\begin{eqnarray} \label{e:6.4} 
&& {\EE}  \left[ \sup_{0\leq s\leq t}  |\wt X^\eps_s|^2  \right].  \nonumber \\
&\leq & C_1\bar  \E  \int_0^t \Big( \eps^{-1} \int_0^1 
 \frac{d}{d\lambda} 
  \Big(  b(s, X^{\bar u}_s + \lambda \eps (V_s + \wt X^\eps_s), (X^{\bar u+\eps v}_s)', \bar u_s + \eps v_s)  
   \nonumber \\
  && \qquad + b(s, X^{\bar u }_s, (X^{\bar u}_s)' + \lambda \eps (V_s + \wt X^\eps_s)',  \bar u_s + \eps v_s)  
  + b(s, X^{\bar u }_s, (X^{\bar u }_s)', \bar u_s +  \lambda \eps v_s)   \Big) d\lambda  \nonumber \\
  && \qquad - \left( \nabla_x b (s, \Theta^{\bar u}_s) V_s + \nabla_y b (s, \Theta^{\bar u}_s) (V_s)'   
   + \nabla_u b (s, \Theta^{\bar u}_s) v_s \right)  \bigg)^2  ds  \nonumber \\
   && +  
   C_1\bar  \E  \int_0^t \Big( \eps^{-1}  \int_0^1 
 \frac{d}{d\lambda} 
  \Big(  \delta(t, X^{\bar u}_s + \lambda \eps (V_s + \wt X^\eps_s), (X^{\bar u+\eps v}_s)', \bar u_s + \eps v_s)  
   \nonumber \\
  && \qquad + \delta (s, X^{\bar u }_s, (X^{\bar u}_s )' + \lambda \eps (V_s+ \wt X^\eps_s)',  \bar u_s + \eps v_s)  
  + \delta (s, X^{\bar u }_s, (X^{\bar u }_s)', \bar u_s +  \lambda \eps v_s)  \Big) d\lambda    \nonumber \\
  && \qquad - \left( \nabla_x \delta (t, \Theta^{\bar u}_s) V_s + \nabla_y \delta (s, \Theta^{\bar u}_s) (V_s)'   
   + \nabla_u \delta (s, \Theta^{\bar u}_s) v_t   \right)  \bigg)^2  ds  \nonumber \\
     && +  
   C_1\bar  \E  \int_0^t \Big(  \eps^{-1} \int_0^1 
 \frac{d}{d\lambda} 
  \Big(  \sigma(s, X^{\bar u}_s + \lambda \eps (V_s + \wt X^\eps_s), (X^{\bar u+\eps v}_s)', \bar u_s + \eps v_s)  
   \nonumber \\
  && \qquad + \sigma (s, X^{\bar u }_s, (X^{\bar u}_s )' + \lambda \eps (V_s + \wt X^\eps_s)',  \bar u_s+ \eps v_s)  
  + \sigma (s, X^{\bar u }_s, (X^{\bar u }_s)', \bar u_s +  \lambda \eps v_s) \Big) d\lambda     \nonumber \\
  && \qquad - \left( \nabla_x \sigma (s, \Theta^{\bar u}_s) V_s + \nabla_y \sigma (t, \Theta^{\bar u}_s) (V_t)'   
   + \nabla_u \sigma (s, \Theta^{\bar u}_s) v_s  \right)  \bigg)^2  ds .
\end{eqnarray}
Each of these three terms can be estimated as follows. With  $\varphi = b, \delta, \sigma$, 
by the bounded and Lipschitz continuity of the first order partial derivatives of $\varphi$ as well as   
and Lemma \ref{xpi-x} and  \eqref{e:Vb}, 
\begin{eqnarray*}   
&& \bar  \E  \int_0^t \Big( \eps^{-1} \int_0^1 
 \frac{d}{d\lambda} 
  \Big(  \varphi (s, X^{\bar u}_s + \lambda \eps (V_s + \wt X^\eps_s), (X^{\bar u+\eps v}_s)', \bar u_s + \eps v_s)  
   \nonumber \\
  && \qquad + \varphi (s, X^{\bar u }_s, (X^{\bar u}_s)' + \lambda \eps (V_s + \wt X^\eps_s)',  \bar u_s + \eps v_s)  
  + \varphi (t, X^{\bar u }_s, (X^{\bar u }_s)', \bar u_s +  \lambda \eps v_s) \Big)  d\lambda   \nonumber \\
  && \qquad -  \left( \nabla_x \varphi  (s, \Theta^{\bar u}_s) V_s + \nabla_y \varphi  (s, \Theta^{\bar u}_s) (V_s)'   
   + \nabla_u \varphi  (s, \Theta^{\bar u}_s) v_s \right) d\lambda \bigg)^2  ds  \nonumber \\
&=&    \bar  \E  \int_0^t \Big( \int_0^1    
  \Big(  \nabla_x \varphi (s, X^{\bar u}_s + \lambda \eps (V_s + \wt X^\eps_s), (X^{\bar u+\eps v}_s)', \bar u_s + \eps v_s) 
   (V_s + \wt X^\eps_s)  
   \nonumber \\
  && \qquad + \nabla_y \varphi (s, X^{\bar u }_s, (X^{\bar u}_s )' + \lambda \eps (V_s + \wt X^\eps_s)',  \bar u_s + \eps v_s)  
   ((V_s)'    + ( \wt X^\eps_s)')  \\ 
    && \qquad   
  + \nabla_u \varphi (s, X^{\bar u }_s, (X^{\bar u }_s)', \bar u_s +  \lambda \eps v_s)   v_s  
 -   \nabla_x \varphi  (s, \Theta^{\bar u}_s) V_s -  \nabla_y \varphi  (s, \Theta^{\bar u}_s) (V_s)'    \\
&& \qquad    - \nabla_u \varphi  (s, \Theta^{\bar u}_s) v_s \Big)  d\lambda \Big)^2  ds  \nonumber \\
&\leq & C_2 \E \int_0^t | \wt X^\eps_s|^2 ds + C_2 \eps^2  \E \int_0^T ( |V_s|^2 + |v_s|^2) ds \\
&\leq & C_2   \int_0^t \E \Big[  \sup_{r\in [0, s]} | \wt X^\eps_r|^2 \Big] ds + C_2 \eps^2  \E \int_0^T ( |V_s|^2 + |v_s|^2) ds .
\end{eqnarray*}
We thus have by \eqref{e:6.4} that 
$$
\E  \left[ \sup_{0\leq s\leq t}  |\wt X^\eps_s|^2  \right]
\leq 3C_2   \int_0^t \E \Big[  \sup_{r\in [0, s]} | \wt X^\eps_r|^2 \Big] ds + 3C_2 \eps^2  \E \int_0^T ( |V_s|^2 + |v_s|^2) ds .
$$
  The desired conclusion now follows from  the Gronwall's inequality.
\qed

\bigskip

\begin{hypothesis}\label{H:6.5}  \rm 
The drivers of the modified  MF-BSDE to be considered in this section are functions
$$
h_i=h_i (\bar \omega,  t, y, z, \wt y, \wt z):  \bar \Omega   \times [0, T] 
\times \R^n \times \R^{n\times d}  \times \R^n \times \R^{n\times d} \to \R^n \quad \hbox{for } i=1, 2,
$$
which are  $\wt \bF$-progressively measurable for all $(y, z, \wt y, \wt z)$ and satisfies 
the following assumptions:
\begin{enumerate}
\item[\rm (i)] $\bar \E \int_0^T | h_1 (\bar \omega, t, {\bf 0} )|^2 dt 
+ \bar \E \int_0^T | h_2 (\bar \omega, t, {\bf 0} )|^2 dL_t < \infty,$
where ${\bf 0}$ denotes the origin of $\R^n \times \R^{n\times d}  \times \R^n \times \R^{n\times d}.$

\item[\rm (ii)] There is a constant $C_0>0$ so that $\bar \bP$-a.s. $\bar \omega \in \bar \Omega$, 
for all $t\in [0, T]$,
$y_i, \wt y_i \in \R^n$, $ z_i, \wt z_i \in \R^{n\times d}$ with  $i=1, 2$, 
$$
|h_k(\bar \omega, t, y_1, z_1, \wt y_1, \wt z_1)- h_k(\bar \omega, t, y_2, z_2, \wt y_2, \wt z_2)|
\leq C_0 ( |y_1-y_2| + |z_1-z_2| + |y_1'-y_2' | + |z_1'-z_2' | )
$$
 for $k=1, 2.$
\end{enumerate}
\end{hypothesis}

For notational simplicity, we will typically drop $\bar \omega$ from the expressions  of the above random processes or variables. 

\begin{theorem}\label{T:BSDE2}
Suppose Hypothesis \ref{H:6.5} holds. For any given $\xi \in  L^2 (\wt\sF_T;\R^n)$,
  the following  MF-BSDE 
 \begin{eqnarray}\label{e:6.2}
 d Y_t   =\E' \left[ h_1(\bar \omega, t,Y_t, Z_t, (Y_t)^\pr, (Z_t)^\pr  ) \right] dt 
  + \E' \left[  h_2 (\bar \omega, t,Y_t, Z_t, (Y_t)^\pr, (Z_t)^\pr  ) \right] dL_t +Z_t d B_{L_t} 
 \end{eqnarray} 
   having  $Y_T  = \xi$
   admits unique $L^2$ adapted solution $(Y  , Z  ) \in \sM^2 [0,T]$
    in the sense that if $(\wt Y  , \wt Z  ) \in \sM^2 [0,T]$ is another solution of \eqref{e:BSDE2},
  then $\wt Y_t=Y_t$ for all $t\in [0, T]$ with probability one and $\E \int_0^T |Z_s -\wt Z_s|^2 dL_ s    =0$.
\end{theorem}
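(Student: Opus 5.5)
We follow the proof of Theorem \ref{T:BSDE}: a contraction mapping on the Banach space $(\sM^2[0,T], \|\cdot\|_{\sM^2_\beta[0,T]})$. Given $(y,z)\in\sM^2[0,T]$, freeze the drivers and set
$$
H_i(t):=\E'\left[h_i(\bar\omega,t,y_t,z_t,(y_t)',(z_t)')\right],\qquad i=1,2,
$$
where $(y',z')$ is the independent copy on $\bar\Omega$. The first thing to check is that $H_i$ is an $\wt\bF$-progressively measurable process on $\Omega$ and that it is square integrable:
$$
\E\int_0^T|H_1(t)|^2dt+\E\int_0^T|H_2(t)|^2dL_t<\infty .
$$
By Jensen's inequality for $\E'$ and Hypothesis \ref{H:6.5}(ii), $|H_i(t)|^2$ is bounded by a constant times $\E'|h_i(t,{\bf 0})|^2+|y_t|^2+|z_t|^2+\E|y_t|^2+\E|z_t|^2$. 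Integrating, and using \eqref{e:2.3} and \eqref{e:4.9a} to convert $dL_t$ and $dt$ integrals of the $z$ and $y$ terms (the constraint $z_t=\1_{\{R_t=0\}}z_t$ is used here), Hypothesis \ref{H:6.5}(i) gives finiteness.

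Next, define $\eta:=\xi-\int_0^TH_1\,ds-\int_0^TH_2\,dL_s\in L^2(\wt\sF_T)$. Theorem \ref{T:2.2} yields a $Z$ with $\eta=\E[\eta|\wt\sF_0]+\int_0^TZ_s\,dB_{L_s}$. We replace $Z$ by $\1_{\{R_s=0\}}Z_s$, which does not change the stochastic integral, so the pair lies in $\sM^2[0,T]$. Setting $Y_t$ exactly as in the proof of Theorem \ref{T:BSDE} gives the unique solution of the frozen equation, and this defines a map $\Phi(y,z)=(Y,Z)$.

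For the contraction estimate, take two inputs and apply Itô's formula to $e^{\beta s}|\wh Y_s|^2$. Hypothesis \ref{H:6.5}(ii) gives
$$
|\wh H_i(s)|\le C_0\left(|\wh y_s|+|\wh z_s|+\E|\wh y_s|+\E|\wh z_s|\right),
$$
which is the analogue of the $\sW_2$ term in \eqref{esti-norm1}, since $\E'|(\wh y_s)'|=\E|\wh y_s|\le(\E|\wh y_s|^2)^{1/2}$. The $dL_s$ term is bounded by $\kappa^{-1}$ times a $ds$ term via \eqref{e:2.3}. The martingale part is uniformly integrable as in \eqref{e:4.10a}. Repeating \eqref{esti-norm1}--\eqref{e:4.11}, with Doob/BDG for the supremum term and $\E\int|\wh z|^2e^{\beta s}ds=\kappa\E\int|\wh z|^2e^{\beta s}dL_s$, gives
$$
\|\Phi(y,z)-\Phi(\wt y,\wt z)\|_{\sM^2_\beta[0,T]}^2\le \frac{C(\kappa+1)(T+1)}{\beta}\,\|(\wh y,\wh z)\|_{\sM^2_\beta[0,T]}^2 .
$$
For $\beta$ large this constant is at most $1/4$, so Banach's fixed point theorem gives existence and uniqueness in $\sM^2[0,T]$. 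Uniqueness in the stated sense ($Y$ indistinguishable by continuity, $\E\int|Z-\wt Z|^2dL=0$) follows from applying the same estimate to two solutions.

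For the supremum estimate of $\wh Y$, a finite a priori bound on $\E\sup_t|Y_t|^2$ is needed, as Proposition \ref{propBSDE} provided in the proof of Theorem \ref{T:BSDE}. It follows from the step (i) argument of Proposition \ref{propBSDE} with the same Jensen bound on $\E'h_i$.

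The main obstacle is measurability. The driver depends on $\bar\omega$ and is averaged over $\omega'$, so we must verify that $\E'[h_i(\cdot)]$ is $\wt\bF$-progressively measurable on $\Omega$. The plan is to use Fubini on the product filtration $\bar\bF=\wt\bF\otimes\wt\bF$: $h_i$ composed with progressively measurable $(y,z,y',z')$ is $\bar\bF$-progressive, and integrating out $\omega'$ preserves progressive measurability in $\omega$. Everything else is a transcription of the proof of Theorem \ref{T:BSDE}.
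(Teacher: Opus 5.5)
Your proposal is correct and is the argument the paper has in mind when it omits the proof as ``similar to that for Theorem \ref{T:BSDE}'': freeze $(y,z)$, solve the frozen equation via the representation of Theorem \ref{T:2.2}, and run the $e^{\beta s}$-weighted contraction, with the $\sW_2$ term replaced by $\E'|(\wh y_s)'|+\E'|(\wh z_s)'|\le (\E|\wh y_s|^2)^{1/2}+(\E|\wh z_s|^2)^{1/2}$ exactly as you do. Your explicit projection $Z\mapsto \1_{\{R_s=0\}}Z_s$ (needed for membership in $\sM^2[0,T]$ and for the $dt$-driver's dependence on $z$) and your Fubini check that $\E'[h_i]$ is $\wt\bF$-progressively measurable are details the paper leaves implicit, so nothing is missing.
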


The proof of the above theorem is similar to that for Theorem \ref{T:BSDE} so it is omitted here. 

\medskip

In view of \eqref{e:2.3}, we can equivalently rewrite the  MF-BSDE \eqref{e:6.2} as 
\begin{eqnarray}\label{e:6.3}
 d Y_t   &=& \E' \left[ h_1(\bar \omega, t,Y_t, Z_t, (Y_t)^\pr, (Z_t)^\pr  ) \right] dt 
  + \kappa^{-1} \1_{\{R_t=0\}} \E' \left[  h_2 (\bar \omega, t,Y_t, Z_t, (Y_t)^\pr, (Z_t)^\pr  ) \right] dt  \nonumber  \\
&&    +Z_t d B_{L_t} 
 \end{eqnarray} 
   having  $Y_T  = \xi$.

 \bigskip

Define two  Hamiltonians 
\begin{equation}\label{H1}
H(t, x, y,  u, p):= f(t, x,  y, u)  - b(t, x,  y, u) \cdot p   
\end{equation}
and 
\begin{equation}\label{H2}
\sH(t, x, y,  u, p, q):= g(t, x,  y, u)  - \delta(t, x,  y, u)\cdot p - \sigma    (t, x,  y, u)\cdot  q,
\end{equation}
where $t\geq 0$, $x, y, u, p\in \R^n$ and $q\in \R^{n\times d}$.  
We use $\nabla_x H$,  $\nabla_y H$ and $\nabla H_u$ to denote the gradient of $H$ with respect to $x$, $y$ and $u$,
 respectively;  that is,
\begin{eqnarray*}
\nabla_x H(t, x, y,  u, p)&:=& \nabla_x f(t, x,  y, u)  -  \nabla_x ( b(t, x,  y, u) \cdot p) , \\
\nabla_y H(t, x, y,  u, p)&:=& \nabla_y f(t, x,  y, u)  -  \nabla_y( b(t, x,  y, u) \cdot p) , \\
\nabla_u H(t, x, y,  u, p)&:=& \nabla_u f(t, x,  y, u)  - \nabla_x ( b(t, x,  y, u) \cdot p),
\end{eqnarray*}
 Similar notions apply to $\sH$ as well.

\medskip

Recall the notation of $\Theta^u_t$ and $\wh \Theta^u_t$ from    \eqref{e:Theta}.
Using the above two Hamiltonians,  the  MF-SDE \eqref{e:SDE} for $X=X^u$ can be written as 
\begin{equation}\label{e:SDE2}
\left\{\begin{aligned}
 dX_t =&\ -  \E ^\prime \left[ \nabla_p H (t,   \Theta^u_t ) \right] dt  
 - \E ^\prime \left[\nabla_p \sH_p (t,   \Theta^u_t) \right] dL_t
- \E^ \prime \left[ \nabla_q \sH (  t,  \Theta^u_t  ) \right] dB_{L_t}  ,   \\
 X_0=&\ x . 
\end{aligned}
\right.
\end{equation}

 \medskip

Fix some  $\bar u \in \cU' [0, T]$. 
We consider  the following associated adjoint equation for $(p_t, q_t)$ taking values in $\R^n \times \R^{n\times d}$:  
 \begin{equation}\label{BSDE-p}
\left\{\begin{aligned}
d p_t =&\    \EE^\prime\big[ \nabla_x H(t,  \Theta^{\bar u}_t,  p_t) + 
 \nabla_y H (t,    \wh \Theta^{\bar u}_t ,  (p_t)^\prime ) \big]dt  \\
   &  + \kappa^{-1} \EE ^\prime \big [ \nabla_x \sH(t,  \Theta^{\bar u}_t,  p_t, q_t) 
    \big]  \1_{\{R_t=0\}}  dt \\
&   + \kappa^{-1} \EE ^\prime \big [   \nabla_y \sH (t,    \wh \Theta^{\bar u}_t ,  (p_t)^\prime, (q_t)^\prime )  \1_{\{ (R_t)^\pr =0\}}  \big]dt  
   +q_t  dB_{L_t}  , \\  
 p_T  = &\  -\EE^\pr \left[  \nabla_x h ( X^{\bar u}_T , (X^{\bar u}_T)^\prime ) +    \nabla_y h ( (X^{\bar u}_T)^\prime , X^{\bar u}_T  ) \right]. 
\end{aligned}
\right.
\end{equation}

\begin{theorem}
Suppose Hypothesis \ref{HP1} holds. 
The  MF-BSDE  \eqref{BSDE-p}
   admits unique $L^2$ adapted solution $(p_t, q_t) \in \sM^2 [0,T]$
    in the sense that if $(\bar  p_t,, \bar q_t) \in \sM^2 [0,T]$ is another solution of \eqref{BSDE-p},
  then $\bar  p_t,=p_t$ for all $t\in [0, T]$ with probability one and $\E \int_0^T |q_s -\wt q_s|^2 dL_ s    =0$.
\end{theorem}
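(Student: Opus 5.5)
The plan is to reduce the adjoint equation \eqref{BSDE-p} to the abstract MF-BSDE of Theorem \ref{T:BSDE2}. Equivalently, one reruns the fixed-point argument of Theorem \ref{T:BSDE} on $\sM^2[0,T]$ with the weighted norm $\|\cdot\|_{\sM^2_\beta[0,T]}$. First I will check the terminal value. Since $\nabla_x h$ and $\nabla_y h$ are bounded by $C_0$ under Hypothesis \ref{HP1}, the random variable
$$p_T=-\E'\big[\nabla_x h(X^{\bar u}_T,(X^{\bar u}_T)')+\nabla_y h((X^{\bar u}_T)',X^{\bar u}_T)\big]$$
is bounded and $\sF'_T\subset\wt\sF_T$-measurable, so $p_T\in L^2(\wt\sF_T;\R^n)$.

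Next I will identify the drivers. I take $h_2\equiv 0$ and put every term into the $dt$-driver. On $\bar\Omega$ define
\begin{eqnarray*}
h_1(\bar\omega,t,y,z,\wt y,\wt z)&:=&\nabla_x H(t,\Theta^{\bar u}_t,y)+\nabla_y H(t,\wh\Theta^{\bar u}_t,\wt y)+\kappa^{-1}\1_{\{R_t=0\}}\nabla_x\sH(t,\Theta^{\bar u}_t,y,z)\\
&&+\kappa^{-1}\1_{\{R'_t=0\}}\nabla_y\sH(t,\wh\Theta^{\bar u}_t,\wt y,\wt z).
\end{eqnarray*}
With this choice \eqref{BSDE-p} is exactly \eqref{e:6.2}, written in the form \eqref{e:6.3}. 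The map $h_1$ is affine in $(y,z,\wt y,\wt z)$. Its coefficients are $\nabla_x b,\nabla_y b,\nabla_x\delta,\nabla_x\sigma,\dots$ evaluated along $\Theta^{\bar u}$ and $\wh\Theta^{\bar u}$, and these are bounded by $C_0$. Hence Hypothesis \ref{H:6.5}(ii) holds with a constant depending only on $C_0$ and $\kappa$. For Hypothesis \ref{H:6.5}(i), $h_1(t,{\bf 0})=\nabla_x f(t,\Theta^{\bar u}_t)+\nabla_y f(t,\wh\Theta^{\bar u}_t)+\kappa^{-1}(\1_{\{R_t=0\}}\nabla_x g+\1_{\{R'_t=0\}}\nabla_y g)$. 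The $f$ part is bounded by Hypothesis \ref{HP1}. For the $g$ part I will either assume that $g$ also has bounded derivatives, which seems to be implicitly required, or use linear growth together with $\E\sup_t|X^{\bar u}_t|^2<\infty$ (Theorem \ref{exis-unique-SDEs}) and $\E\int_0^T|\bar u_t|^2dt<\infty$. Either route gives square integrability. For measurability, $h_1$ is $\bar\bF$-progressively measurable because $X^{\bar u}$, $\bar u$ and $\1_{\{R_t=0\}}$ are $\bF'$-adapted, as noted after \eqref{e:2.3}, and the primed copies are adapted on the second factor.

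With these checks in place, Theorem \ref{T:BSDE2} gives a unique solution $(p,q)\in\sM^2[0,T]$ in the stated sense. The main obstacle is that Theorem \ref{T:BSDE2} is stated with its proof omitted, and the driver here depends on $z$ through the $dt$-integral and on $\wt z=(q_t)'$. So I would verify the contraction directly. Apply It\^o to $e^{\beta s}|\wh p_s|^2$ exactly as in \eqref{esti-norm1}. The $z$-dependent terms carry the factor $\1_{\{R_s=0\}}$, or $\1_{\{R'_s=0\}}$ for the primed ones. By \eqref{e:4.9a} and Fubini on $\bar\Omega$,
$$\bar\E\int e^{\beta s}|\wh z'_s|^2\1_{\{R'_s=0\}}ds=\kappa\,\E\int e^{\beta s}|\wh z_s|^2dL_s.$$
This bounds the $dt$-integrals of $|\wh z|^2$ and $|\wh z'|^2$ by the $dL$-part of the norm, so the bounds \eqref{e:4.19a}--\eqref{e:4.11} carry over with constants of order $C/\beta$. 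Taking $\beta$ large then gives the contraction and hence the fixed point.

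Uniqueness in the sense stated follows in the same way as in Theorem \ref{T:BSDE}. The difference of two solutions satisfies the contraction estimate with identical inputs, so $\E\int_0^T|q_s-\bar q_s|^2dL_s=0$. Continuity of $p$ then upgrades $\E\int_0^T e^{\beta t}|p_t-\bar p_t|^2dt=0$ to indistinguishability.
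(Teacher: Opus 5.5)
Your proposal is correct and is essentially the paper's proof: the paper also identifies \eqref{BSDE-p} with the abstract MF-BSDE \eqref{e:6.2}, asserts that Hypothesis \ref{HP1} implies Hypothesis \ref{H:6.5}, and invokes Theorem \ref{T:BSDE2}; the only cosmetic difference is that it keeps the unprimed term $\nabla_x\sH(t,\Theta^{\bar u}_t,p,q)$ as a $dL_t$-driver $h_2$, whereas you fold it into $h_1$ via $dL_t=\kappa^{-1}\1_{\{R_t=0\}}dt$, which is equivalent by \eqref{e:2.3}. Your sketch of the contraction estimate is useful because the proof of Theorem \ref{T:BSDE2} is omitted, and you are right that $g$ also needs bounded (or suitably integrable) derivatives; Hypothesis \ref{HP1} does not list $g$, but the paper's proof tacitly uses this.
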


\pf  Equation \eqref{BSDE-p}  can be identified with  MF-BSDE \eqref{e:6.2} with
  \begin{eqnarray}\label{e:h12}
\left\{\begin{aligned} 
 h_1 (\omega, \omega',  t, p, q, \wt p, \wt q)
:= \, &   \nabla_x f(  t,   X^{\bar u}_t (\omega), X^{\bar u}_t (\omega') ,  {\bar u} _t (\omega)  )   
   + \nabla_y f(  t,   X^{\bar u}_t (\omega') ,    X^{\bar u}_t (\omega),  {\bar u}_t (\omega'))  \\
&   -  \nabla_x  \left( b(t,  X^{\bar u}_t (\omega),    X^{\bar u}_t (\omega'),   {\bar u}_t (\omega)) \cdot  p \right) 
   -   \nabla_y  \left( b(t,     X^{\bar u}_t (\omega'),   X^{\bar u}_t (\omega) ,   {\bar u}_t (\omega')) \cdot  \wt p \right)  \\
 & +  \kappa^{-1}  \1_{\{ R_t (\omega')= 0\}}  \nabla_y g  (t, X^{\bar u}_t (\omega'),   X^{\bar u}_t (\omega) ,   {\bar u}_t (\omega'))    \\
 & - \kappa^{-1}   \1_{\{ R_t (\omega')= 0\}}  \nabla_y \left( \delta( t, (  X^{\bar u}_t (\omega'),   X^{\bar u}_t (\omega) , 
  {\bar u}_t (\omega'))    \cdot  \wt p \right)  \\
&  - \kappa^{-1}  \1_{\{ R_t (\omega')= 0\}}    \nabla_y \left( \sigma ( t,   X^{\bar u}_t (\omega'),   X^{\bar u}_t (\omega) , 
     {\bar u}_t (\omega'))     \cdot  \wt q \right), \\
  h_2 (\omega, \omega',  t, p, q, \wt p, \wt q)
 := \, &  \nabla_x g ( t,   X^{\bar u}_t (\omega), X^{\bar u}_t (\omega') ,  {\bar u} _t (\omega)  )   
  -     \nabla_x  \left( \delta  ( t,   X^{\bar u}_t (\omega), X^{\bar u}_t (\omega') ,  {\bar u} _t (\omega)  )  \cdot p \right) \\
 &-   \nabla_x \left( \sigma ( t,   X^{\bar u}_t (\omega), X^{\bar u}_t (\omega') ,  {\bar u} _t (\omega)  )  \cdot  q  \right)  ,  \\
 \xi := \, &  -  \EE^\pr \left[ \nabla_x h ( X^{\bar u}_T   , (X^{\bar u}_T)^\prime ) +    \nabla_y h ( (X^{\bar u}_T)^\prime , X^{\bar u}_T  ) \right].
\end{aligned}
\right.
\end{eqnarray}

\medskip

 Under Hypothesis \ref{HP1}, Hypothesis \ref{H:6.5} holds for $(h_1, h_2)$. 
 Thus  by Theorem \ref{T:BSDE2}, 
  MF-BSDE \eqref{BSDE-p}  has a unique $L^2$ adapted solution
 $(p, q)\in \sM^2 [0, T]$. \qed
 
 \bigskip

\begin{theorem}\label{Necessaryconv}
Assume that the Hypothesis  \ref{HP1}    holds. Suppose that  $\bar u \in \cU'[0, T]$   is a local optimal
control  of \eqref{e:SDE}. Denote by  $   \bar x  $  its corresponding  state process.  
Then for every $u\in  \cU'[0, T]$, 
  \begin{equation}\label{e:SMP}
 \E \left[ \int_0^T \E^\pr \left[ \nabla_u H (t, \Theta^{\bar u}_t,  p_t)  
      +       \1_{\{R_t=0\} }\kappa^{-1}
    \nabla_u  \sH  (t, \Theta^{\bar u}_t, p_t, q_t) \right] \cdot  (u_t-\bar u_t) dt \right] \geq 0.
   \end{equation} 
Moreover, if this local optimal control   $\bar u$   is an interior point of $\cU' [0, T]$,
then $d \bP\times dt$ a.s. on   $\Omega \times [0,T]$,  
 \begin{equation}\label{e:SMP1}
   \EE^\pr \big[ \nabla_u H  (t, \Theta^{\bar u}_t, \bar  p_t, ) \big]
    +  \1_{ \{R_t=0\}}    \kappa^{-1}   \EE^\pr \big[ \nabla_u \sH (t,  \Theta^{\bar u}_t,  \bar  p_t, \bar q_t  ) ]    =0,
   \end{equation}
where $\bar  p_t,:=  \EE[  p_t|   \sF^{\prime}_t  ]$ and $\bar q_t:=  \EE[ q_t |  \sF^{\prime}_t    ] $.

   \end{theorem}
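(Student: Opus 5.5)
We follow the standard convex variational scheme. Fix $u\in\cU'[0,T]$ and set $v:=u-\bar u$, so that $\bar u+\eps v\in\cU'[0,T]$ for $\eps\in[0,1]$. Local optimality of $\bar u$ gives $J(\bar u+\eps v)-J(\bar u)\ge 0$ for all small $\eps>0$, hence
\[
0\le \liminf_{\eps\downarrow 0}\eps^{-1}\big(J(\bar u+\eps v)-J(\bar u)\big).
\]

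\textbf{Step 1: derivative of the cost.} We compute the right derivative of $J$ at $\bar u$ in direction $v$. We write each difference $f(t,\Theta^{\bar u+\eps v}_t)-f(t,\Theta^{\bar u}_t)$, and likewise for $g$ and $h$, as an integral over $\lambda\in[0,1]$ of first derivatives, exactly as in the proof of Lemma \ref{xva2}. Lemma \ref{xpi-x} and Lemma \ref{xva2} allow us to replace $\eps^{-1}(X^{\bar u+\eps v}-X^{\bar u})$ by $V$ with an $O(\eps)$ error in $L^2$. Since the derivatives of $f,g,h$ are bounded and Lipschitz, and since $dL_t=\kappa^{-1}\1_{\{R_t=0\}}dt$ by \eqref{e:2.3}, dominated convergence gives
\[
\lim_{\eps\downarrow0}\frac{J(\bar u+\eps v)-J(\bar u)}{\eps}
=\bar\E\Big[\int_0^T\big(\nabla_xf\cdot V_t+\nabla_yf\cdot V_t'+\nabla_uf\cdot v_t\big)dt
+\int_0^T(\cdots g\cdots)\,dL_t+\nabla_xh\cdot V_T+\nabla_yh\cdot V_T'\Big],
\]
where all derivatives are evaluated at $\Theta^{\bar u}$. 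Using the symmetry of $\bar\P$ under $\omega\leftrightarrow\omega'$, the $y$-derivative terms are rewritten as terms evaluated at $\wh\Theta^{\bar u}_t$ and paired with $V_t$. This is exactly what produces the terminal value $-p_T$ in \eqref{BSDE-p}.

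\textbf{Step 2: duality.} Apply It\^o's product formula to $p_t\cdot V_t$ on $[0,T]$, using \eqref{e:V} and \eqref{BSDE-p}, and take $\bar\E$. The martingale parts have zero mean by the same BDG argument as \eqref{e:4.10a}; this uses $\E\sup_t|V_t|^2<\infty$ together with Proposition \ref{propBSDE}-type bounds for $(p,q)$. The cross-variation term is $\E'[\nabla_x\sigma V+\nabla_y\sigma V'+\nabla_u\sigma v]\cdot q_t\,dL_t$.

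After the $\omega\leftrightarrow\omega'$ swap, the $\nabla_y$-terms in the adjoint drift are designed to cancel the $V'$-terms coming from $b,\delta,\sigma$. The indicator $\1_{\{(R_t)'=0\}}$ in \eqref{BSDE-p} is precisely what converts the $dL_t'$-type contribution after the swap. All terms involving $V$ then cancel against the $V$-terms of Step 1, leaving
\[
0\le \E\int_0^T\E'\big[\nabla_uH(t,\Theta^{\bar u}_t,p_t)+\kappa^{-1}\1_{\{R_t=0\}}\nabla_u\sH(t,\Theta^{\bar u}_t,p_t,q_t)\big]\cdot v_t\,dt,
\]
which is \eqref{e:SMP}.

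\textbf{Step 3: the interior case.} Since $u_t-\bar u_t$ is $\bF'$-progressive, we may condition on $\sF'_t$ inside the $dt$-integral. By the tower property, $p_t,q_t$ can then be replaced by $\E[p_t|\sF'_t]$ and $\E[q_t|\sF'_t]$, because $\nabla_uH$ and $\nabla_u\sH$ are affine in $(p,q)$ with $\sF'_t$-measurable coefficients. Note that $\{R_t=0\}\in\sF'_t$, and $X^{\bar u}$ and $\bar u$ are $\bF'$-adapted. If $\bar u$ is interior, we take $v=\pm\eta\,\1_A$ for small $\eta>0$ and arbitrary $\bF'$-progressive sets $A$. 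This forces the $\sF'_t$-conditioned integrand to vanish $d\bP\times dt$-a.e., which is \eqref{e:SMP1}.

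\textbf{Main obstacle.} The main difficulty is the bookkeeping in Step 2. One must check that the mean-field $\nabla_y$ terms, together with the $dL_t$-indicators evaluated at the copy $\omega'$, cancel exactly under the symmetrization $\bar\E[F(\omega,\omega')]=\bar\E[F(\omega',\omega)]$. One must also check that the $q$-terms attached to $dL_t$ versus $dt$ match via \eqref{e:2.3}. Integrability of the local martingales is routine by comparison.
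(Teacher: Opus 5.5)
Your proposal is correct and follows essentially the same route as the paper: you differentiate $J$ along $\bar u+\eps v$ via the variational process $V$ of \eqref{e:V} and Lemmas \ref{xpi-x}--\ref{xva2}, then apply It\^o's product formula to $p_t\cdot V_t$ and use the $\omega\leftrightarrow\omega'$ symmetry (with $\1_{\{(R_t)'=0\}}$ and \eqref{e:2.3}) to cancel all $V$-terms, and for the interior case you condition on $\sF'_t$, using that $\nabla_u H$ and $\nabla_u\sH$ are affine in $(p,q)$ with $\sF'_t$-measurable coefficients. Your remarks on the integrability of the local-martingale terms and on the choice $v=\pm\eta\1_A$ make explicit steps the paper leaves implicit.
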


\noindent{\bf Proof.}
For  $u\in  \cU' [0, T]$,  set $v=u- \bar u $. Then  for any $\eps \in (0, 1)$,
$\bar u + \eps v = (1-\eps ) \bar u + \eps u\in \cU' [0, T]$ and so 
\begin{eqnarray}\label{dcostfunc}
 0   &\leq & \lim_{\va \to   0} \frac{ J(\bar u  +\va v )- J(\bar u  )     }{\va}
 \nonumber \\
&=&  \lim_{\va \to   0} \frac 1 \va  \EE   \Big [    \int_0^T  \EE^\prime \big[  f(t,  \Theta^{\bar u + \eps v}_t   ) 
-f(t, \Theta^{\bar u}_t)\big] dt  
+  \int_0^T  \EE^\prime \big[  g(t,      \Theta^{\bar u + \eps v}_t   ) 
-g(t, \Theta^{\bar u}_t)\big] dL_t \Big] \nonumber  \\
&& \hskip 0.6truein   
 +\E  \EE^\prime \left[ h(X^{\bar u+\va v }_T, (X^{\bar u+\va v }_T)^\prime   )  -  h(  X^{\bar u}_T, (X^{\bar u}_T)^\prime ) \right]      \nonumber \\
 &= &  {\mathbb{E}} \bigg [  \int_0^T  \EE^\prime   \left[ \nabla_x f (t, \Theta^{\bar u}_t) \cdot V_t
     + \nabla_y f(t, \Theta^{\bar u}_t) \cdot (V_t)^\prime  
      +\nabla_u f(t, \Theta^{\bar u}_t ) \cdot v_t \right] dt       \nonumber \\
    && \qquad + \int_0^T  \EE^\prime   \left[ \nabla_x g (t, \Theta^{\bar u}_t) \cdot V_t
     + \nabla_y g(t, \Theta^{\bar u}_t) \cdot (V_t)^\prime 
       +\nabla_u g(t, \Theta^{\bar u}_t ) \cdot v_t \right] dL_t  \bigg]
      \nonumber \\
&&\qquad + \E \EE^\pr \left[
\nabla_x h  \left(X^{\bar u  }_T, (X^{\bar u  }_T)^\prime   \right) \cdot V_T  
 +  \nabla_y h \left(  X^{\bar u}_T, (X^{\bar u}_T)^\prime \right) \cdot ( V_T)^\prime  \right]    ,
\end{eqnarray}
where  $V_t$ is the $\R^n$-valued process  given by \eqref{e:V}, 
which by Lemma \ref{xva2} is the derivative process of $X_t^{\bar u + \eps v}$
in $\eps$ at $\eps=0$. 
By It\^o's formula,  we have
\begin{eqnarray*}
 &&   -\EE \left  [V_T \cdot \EE^\pr \left[     \nabla_x h  (X^{\bar u  }_T, (X^{\bar u  }_T)^\prime   )  + \nabla_y h (  X^{\bar u}_T, (X^{\bar u}_T)^\prime )         \right]  \right]  \nonumber \\
&=& \EE   [V_T \cdot p_T - V_0 \cdot p_0]  
= \EE \left[ \int_0^T V_t \cdot dp_t + \int_0^T p_t \cdot dV_t + \<V, p\>_T \right]  \nonumber \\ 
& =&   \EE   \int_0^T  \EE^\pr   \left [  V_t \cdot  \left((   \nabla_x f (t,  \Theta^{\bar u}_t  )  
 + \nabla_y   f  (  t,   \wh \Theta^{\bar u }_t   )   \right) \right]dt  \nonumber \\
 && \qquad + \kappa^{-1}  \EE^\pr   \left[  V_t \cdot \left(   \nabla_x g (t,  \Theta^{\bar u}_t  )  
 \1_{\{R_t=0\}} + \nabla_y   g   (  t,   \wh \Theta^{\bar u }_t   )   \1_{\{ (R_t)^\pr =0\}}  \right) \right]dt   \nonumber \\
&&+ \EE  \int_0^T\EE^\pr    \left[ p_t  \cdot  \nabla_u b (   t,  \Theta^{\bar u}_t   ) v_t    \right]  dt    \nonumber \\
&& +\EE  \int_0^T \EE^\pr   \left[   \nabla_u \delta ( t,  \Theta^{\bar u}_t    )  p_t 
+    \nabla_u \sigma ( t,  \Theta^{\bar u}_t    )   q_t   \right]  v_t dL  _ {(t-a) ^+ } .
\end{eqnarray*}
This together with  \eqref{dcostfunc} yields   that 
 \begin{eqnarray*}
0 &\leq &  \E \Big[ \int_0^T  \E^\pr \left [ \left(  \nabla_u f(t, \Theta^{\bar u}_t ) 
-  \nabla_u b (t, \Theta^{\bar u}_t ) p_t  \right) v_t \right] dt
      \nonumber \\
    && \qquad + \int_0^T  \EE^\prime   \left[ \big(  \nabla_u g (t, \Theta^{\bar u}_t)
    - \nabla_u \delta (t, \Theta^{\bar u}_t) p_t 
     - \nabla_u \sigma (t, \Theta^{\bar u}_t)  q_t \big) v_t \right] dL_t \Big] 
      \nonumber \\
      &=& \E \left[ \int_0^T \E^\pr \left[ H_u (t, \Theta^{\bar u}_t, p_t )
      +       \1_{\{R_t=0\} }\kappa^{-1}
      \sH_u (t, \Theta^{\bar u}_t, p_t, q_t) \right]  (u_t-\bar u_t) dt \right], 
 \end{eqnarray*}
 where we used the identities 
 \begin{eqnarray*}
\E  \EE^\pr \left[ \nabla_y h (  X^{\bar u}_T, (X^{\bar u}_T)^\prime )  ( V_T)^\prime  \right]   
&=& \E \left[ \EE^\pr \left[ \nabla_y h (   (X^{\bar u}_T)^\prime ,  X^{\bar u}_T)  \right] V_T  \right]  ,   
\\
  {\mathbb{E}}  \int_0^T   \E^\pr \left[  \nabla_y f(t, \Theta^{\bar u}_t) (V_t)^\prime     \right] dt
 & =&  {\mathbb{E}}  \int_0^T   \E^\pr \left[  \nabla_y f(t,   \wh \Theta^{\bar u}_t)      \right] 
 V_t dt,  
 \\
  {\mathbb{E}}  \int_0^T   \E^\pr \left[ p_t  \cdot  \nabla_y b(t, \Theta^{\bar u}_t) (V_t)^\prime     \right] dt
 & =&  {\mathbb{E}}   \int_0^T   \E^\pr \left[ (p_t)^\pr  \cdot \nabla_y b(t,   \wh \Theta^{\bar u}_t)      \right] 
 V_t dt, 
  \\
 \E  \int_0^T  \E^\pr \left[ \nabla_y g(t, \Theta^{\bar u}_t) (V_t)^\prime    \right] dL_t
 &=& \kappa^{-1}  \E  \int_0^T  \E^\pr \left[ \nabla_y g(t, \Theta^{\bar u}_t) (V_t)^\prime    \right] 
 \1_{\{R_t=0\}} dt \\
 &=& \kappa^{-1}  \E  \int_0^T  \E^\pr \left[ \nabla_y g(t,   \wh \Theta^{\bar u}_t ) 
 \1_{\{ (R_t)^\pr =0\}}   \right] V_t dt  \\
 \E  \int_0^T p_t  \cdot  \E^\pr \left[ \nabla_y \delta (t, \Theta^{\bar u}_t) (V_t)^\prime    \right] dL_t
   &=& \kappa^{-1}  \E  \int_0^T  \E^\pr \left[ (p_t)^\pr  \cdot \nabla_y \delta (t,   \wh \Theta^{\bar u}_t ) 
 \1_{\{ (R_t)^\pr =0\}}   \right] V_t dt  \\
 \E  \int_0^T q_t \cdot  \E^\pr \left[ \nabla_y \sigma (t, \Theta^{\bar u}_t) (V_t)^\prime    \right] dL_t
   &=& \kappa^{-1}  \E  \int_0^T   \E^\pr \left[ (q_t)^\pr \cdot \nabla_y \sigma (t,   \wh \Theta^{\bar u}_t ) 
 \1_{\{ (R_t)^\pr =0\}}   \right] V_t dt ,
 \end{eqnarray*}
due to the fact that   $( (R_t)^\pr , (X^{\bar u}_T)^\pr, (V_t)^\pr, (u_t)^\pr, (p_t)^\pr, (q_t)^\pr)$ 
are independent copy of  $( R_t, X^{\bar u}_T, V_t, u_t, p_t, q_t)$ on $\bar \Omega = \Omega \times \Omega$. 
This establishes \eqref{e:SMP}. 

Since both $H_u$ and $\sH_u$ are linear in $p$ and $q$ and $X^{\bar u}_T$, $\1_{R_t=0\}}$, $\bar u_t$ and $u_t$ 
are $\{\sF'_t\}$-measurable, we have by \eqref{e:SMP} that 
\begin{equation}\label{e:6.9}
 \E \left[ \int_0^T \E^\pr \left[ \nabla_u H (t, \Theta^{\bar u}_t, \bar  p_t, )
      +       \1_{\{R_t=0\} }\kappa^{-1}  
      \nabla_u  \sH (t, \Theta^{\bar u}_t, \bar  p_t, \bar q_t) \right] \cdot (u_t-\bar u_t) dt \right]
      \geq 0
\end{equation}
for every $u\in \cU'[0, T]$.
When the local optimal control $\bar u$ is an interior point of $\cU' [0, T]$, 
 we conclude from \eqref{e:6.9} that \eqref {e:SMP1} holds. 
\qed

\section{Sufficient conditions for maximum principle}\label{S:7}

In this section,   let $0<T<\infty$, and $x_0\in \R$.
 Let $\bar u  $ be an admissible control in  $\cU' [0, T]$ and   $X^{\bar u} $   be
  the corresponding solution to the  \eqref{e:SDE} with $\bar u$ in place of $u$.  
Let  $(p  , q  ) $ be the solution to  the adjoint BSDE equation \eqref{BSDE-p} 
associated with $(\bar u, X^{\bar u})$. 
 
 Recall the two Hamiltonians $H$ and $\sH$ defined in \eqref{H1} and \eqref{H2}
and, for $u\in \cU' [0, T]$, the processes $\Theta^u$ and $\wh \Theta^u$ in \eqref{e:Theta}.
For notational simplicity,   for $\bar u, u\in \cU' [0, T]$, set  
 $$ H(t, u):= H  (t,   \Theta^{u}_t, p_t ), \quad \sH (t, u):= \sH  (t,   \Theta^{u}_t, p_t, q_t ),
 $$
 $$
 \wh H(t, u):= H  (t,   \wh \Theta^{u}_t, (p_t)'  ), \quad \wh \sH (t, u):= \sH  (t,   \wh \Theta^{u}_t, (p_t)',  (q_t)' ),
 $$
 \begin{equation}\label{e:H*}
 H^* (t, u) := H(t, u)+ \kappa^{-1}  \1_{\{R_t=0\}} \sH (t, u), \quad 
 \wh  H^* (t, u) := \wh H(t, u)+ \kappa^{-1} \1_{\{ (R_t)'=0\}} \wh \sH(t, u),
 \end{equation}
  where $(p_t, q_t)$ is the solution for the adjoint equation \eqref{BSDE-p} corresponding to $\bar u \in \cU ' [0, T]$.

\begin{theorem}[Sufficient Stochastic Maximum Principle]\label{Sufficiency} 
 
Suppose that  Hypothesis \ref{HP1} holds and $\bar u\in \cU' [0, T]$. 
  With the above notations,    assume  that     
  \begin{itemize}
 \item [\rm (i)]  $h $ is  convex  functions on $\R^n\times \R^n$, 
 
  \item [\rm (ii)]  for every $u\in \cU' [0, T]$, 
    \begin{eqnarray*}
 \E \left[ \int_0^T \E^\prime \left[ H ^* (t, u)- H^* (t, \bar u) \right] dt \right] 
\geq \E \Big [ \int_0^T (X^u_t - X^{\bar u}_t)  \cdot  \E' \big[ \nabla_x H^*  (t, \bar u) +\nabla_y \wh H^* (t, \bar u)   \big] dt   \Big] .
  \end{eqnarray*} 
      \end{itemize}
Then $\bar u (\cdot )$ is an optimal control. 
\end{theorem}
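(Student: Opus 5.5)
Fix $u\in\cU'[0,T]$ and write $X=X^u$, $\bar X=X^{\bar u}$ and $\Delta_t:=X_t-\bar X_t$, so that $\Delta_0=0$. The aim is to show $J(u)-J(\bar u)\ge 0$. First rewrite both running costs on a single $dt$ scale using \eqref{e:2.3}: $dL_t=\kappa^{-1}\1_{\{R_t=0\}}dt$. Then
\[
J(u)-J(\bar u)=\E\int_0^T\E'\big[f(t,\Theta^u_t)-f(t,\Theta^{\bar u}_t)+\kappa^{-1}\1_{\{R_t=0\}}(g(t,\Theta^u_t)-g(t,\Theta^{\bar u}_t))\big]dt+\E\E'\big[h(X_T,X_T')-h(\bar X_T,\bar X_T')\big].
\]
For the terminal term, use the convexity of $h$ in hypothesis (i):
\[
h(X_T,X_T')-h(\bar X_T,\bar X_T')\ge \nabla_xh(\bar X_T,\bar X_T')\cdot\Delta_T+\nabla_yh(\bar X_T,\bar X_T')\cdot\Delta_T'.
\]
Taking $\E\E'$ and applying the symmetry swap $\omega\leftrightarrow\omega'$ (the same device used in the proof of Theorem \ref{Necessaryconv}) turns the right-hand side into $\E[\Delta_T\cdot\E'[\nabla_xh(\bar X_T,\bar X_T')+\nabla_yh(\bar X_T',\bar X_T)]]$. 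By \eqref{BSDE-p}, this equals $-\E[\Delta_T\cdot p_T]$.

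Next, compute $\E[\Delta_T\cdot p_T]$ by Itô's formula on $\Delta_t\cdot p_t$ from $0$ to $T$. The dynamics of $\Delta$ are
\[
d\Delta_t=\E'[b(t,\Theta^u_t)-b(t,\Theta^{\bar u}_t)]dt+\E'[\delta(t,\Theta^u_t)-\delta(t,\Theta^{\bar u}_t)]dL_t+\E'[\sigma(t,\Theta^u_t)-\sigma(t,\Theta^{\bar u}_t)]dB_{L_t}.
\]
The dynamics of $p$ come from \eqref{BSDE-p}, and the bracket term is $\langle \Delta,p\rangle$, contributing $\E'[\sigma(\Theta^u)-\sigma(\Theta^{\bar u})]\cdot q_t\,dL_t$. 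The stochastic integrals are true martingales by the same BDG argument as for \eqref{e:4.10a}, using $\E\sup|\Delta|^2<\infty$ from Theorem \ref{exis-unique-SDEs} and $(p,q)\in\sM^2[0,T]$ together with Proposition \ref{propBSDE}. Convert every $dL_t$ to $\kappa^{-1}\1_{\{R_t=0\}}dt$ once more. The $b,\delta,\sigma$ differences against $p_t,q_t$ combine with the $f,g$ differences to give exactly
\[
\E\int_0^T\E'[H^*(t,u)-H^*(t,\bar u)]\,dt.
\]
Here one has to check the sign conventions, since $H=f-b\cdot p$ and $\sH=g-\delta\cdot p-\sigma\cdot q$.

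The $dp_t$ drift paired with $\Delta_t$ must then be identified with $\E\int_0^T\Delta_t\cdot\E'[\nabla_xH^*(t,\bar u)+\nabla_y\wh H^*(t,\bar u)]dt$. The terms with $\nabla_y$ evaluated at $\wh\Theta^{\bar u}_t$ and carrying $\1_{\{R_t'=0\}}$ are exactly what $\nabla_y\wh H^*$ produces. This identification is the step needing most care, specifically the matching of the indicator $\1_{\{R'_t=0\}}$ and of the primed versus unprimed arguments.

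Assembling these pieces gives
\[
J(u)-J(\bar u)\ge \E\int_0^T\E'[H^*(t,u)-H^*(t,\bar u)]dt-\E\int_0^T\Delta_t\cdot\E'[\nabla_xH^*(t,\bar u)+\nabla_y\wh H^*(t,\bar u)]dt,
\]
up to the sign bookkeeping, which must be tracked so that the $p$-drift term and the Itô expansion cancel consistently. Hypothesis (ii) says precisely that the right-hand side is $\ge 0$, so $J(u)\ge J(\bar u)$ for every $u\in\cU'[0,T]$ and $\bar u$ is optimal.

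The main obstacle is keeping signs and the primed/unprimed symmetrization straight in the Itô expansion, so that the adjoint drift cancels exactly against the linear terms in hypothesis (ii). Everything else rests on the independence of the two copies on $\bar\Omega$ and on \eqref{e:2.3}.
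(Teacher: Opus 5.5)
Your proposal is correct and follows the paper's proof step for step. You use convexity of $h$ plus the $\omega\leftrightarrow\omega'$ swap to reach $-\E[\Delta_T\cdot p_T]$, apply It\^o's formula to $\Delta_t\cdot p_t$, absorb the $b,\delta,\sigma$ differences (with $dL_t=\kappa^{-1}\1_{\{R_t=0\}}dt$) into $\E\int_0^T\E'[H^*(t,u)-H^*(t,\bar u)]\,dt$, match the adjoint drift with $\E'[\nabla_xH^*(t,\bar u)+\nabla_y\wh H^*(t,\bar u)]$, and then invoke (ii).

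The sign bookkeeping you deferred comes out exactly as in your displayed inequality: the whole adjoint-drift term enters with a minus sign, which is what (ii) needs. The paper's display \eqref{e:7.4} carries a $+$ on its $\nabla H$ part, which is a sign slip that your version avoids.
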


\pf    For $u\in \cU_a' [0, T]$, 
 \begin{eqnarray}\label{J-Jbar}
J(u)-J( \bar u)  
&=&\EE \left[       \int_0^T \EE^\pr \left[  f( t, \Theta^u_t)  -  f( t, \Theta^{\bar u}_t) \right] dt  
+ \int_0^T \E'  \left[  g( t, \Theta^u_t)  -  g( t, \Theta^{\bar u}_t) \right] dL_t   \right] 
  \nonumber\\
&&  + \,           \EE \left[  \EE^\pr\left[ h(X^{ u}_T, (X^{ u}_T) ^\pr  ) - h( X^{\bar u} _T, (X^{\bar u}_T)^\pr ) \right] \right]. 
\end{eqnarray}
By the convexity of $h$  and It\^o's formula, 
\begin{eqnarray}\label{p-6-1}
&&  \EE \left[ \EE^\pr \left[ h( X^{ u}_T , (       X^{ u}_T) ^\pr   ) - h(  X^{\bar u}_T , (X^{\bar u}_T)^\pr ) \right] \right]     \nonumber\\
  & \ge& \EE \left[ \EE^\pr \left[   \nabla_x h (  X^{ \bar u}_T,      (X^{\bar  u}_T) ^\pr    )   \cdot (  X^u_T    -   X^{\bar u}_T  ) 
  +   \nabla_y h (    X^{\bar u}_T,  (X^{\bar u}_T )^\pr  )  \cdot ( (X^u_T)^\pr   -  (X^{\bar u}_T)^\pr )    \right] \right]  \nonumber\\
&=&-\E \left[  \E' \left[   p_T  \cdot (  X^u_T    -   X^{\bar u}_T  )   -p_0  \cdot (  X^u_0   -   \bar X_0  )  \right ]\right]   \nonumber\\
&=& - \EE\bigg [  \E' \int_0^T p_t \cdot d ( X^u_t - X^{\bar u}_t  ) + \int_0^T ( X^u_t - X^{\bar u}_t  ) \cdot dp_t + \< p, X^u_t - \bar X\>_T   \bigg]
     \nonumber\\
  &=& - \EE\bigg[  \E' \int_0^T  p_t \cdot \left( b (t, \Theta^u_t)- b (t, \Theta^{\bar u}_t)  \right) dt 
  +  \E' \int_0^T  p_t \cdot \left( \delta (t, \Theta^u_t)-  \delta (t, \Theta^{\bar u}_t)  \right) dL_t 
  \nonumber \\
&&   \qquad  +  \E' \int_0^T    (X^u_t-X^{\bar u}_t ) \cdot 
 \big(   \nabla_x H(t, \bar u)   + \nabla_y \wh H(t, \bar u)    \big)dt \nonumber\\
 &&   \qquad  + \kappa^{-1}    \E' \int_0^T    (X^u_t-X^{\bar u}_t) \cdot \left(  \1_{\{R_t=0\}}
  \nabla_x \sH(t, \bar u)      +  \1_{\{(R_t)'=0\}}     \nabla_y \wh \sH(t, \bar u) \right)  dt \nonumber\\
 && \qquad + \E' \int_0^t   q_t    \cdot \left( \sigma (t, \Theta^u_t )- \sigma (t, \Theta^{\bar u}_t ) \right)    d L_t  \bigg] .  
\end{eqnarray} 
 This together with  \eqref{J-Jbar} yields 
 \begin{eqnarray}\label{e:7.4}
&& J(u)-J(\bar u)  \nonumber \\
&\geq& \EE \bigg[      \int_0^T \E^\pr \left[ (H(t, u)-H(t, \bar u)) dt + \sH(t, u)- \sH(t, \bar u)) dL_t \right]   \nonumber\\
&&   \qquad  +  \int_0^T   \E'  \left[    (X^u_t-X^{\bar u}_t ) \cdot 
 \big(   \nabla_x H(t, \bar u)   + \nabla_y \wh H(t, \bar u)    \big) \right] dt \nonumber\\
&& \qquad -  \kappa^{-1} \int_0^T \E' \left[ (X^u_t-X^{\bar u}_t )  \cdot \left(  \1_{\{R_t=0\}}
  \nabla_x \sH(t, \bar u)      +  \1_{\{(R_t)'=0\}}     \nabla_y \wh \sH(t, \bar u) \right)  dt  \right]  \bigg], 
   \end{eqnarray}
which  is non-negative by assumption (ii). This proves that $\bar u$ is an optimal control. 
 \qed

 \bigskip

\section{Example}\label{S:8}
\setcounter{equation}{0}
\renewcommand{\theequation}{\thesection.\arabic{equation}}

In this section we given a one-dimensional example with $n=d=1$ and the control domain $U=\R$.
Let $T>0$ and $\lambda \in \R$. Suppose the state equation is of the  following form on $\R$:
\begin{eqnarray}\label{xE1}
d X_t &=& \left(\lambda \EE X_t+X_t  +u_t\right) dt +   dB_{L_t}  \quad \hbox{for } t\in[0,T]
\ \hbox{ with } X_0=x_0\in \R.   
 \end{eqnarray}
  The objective is to minimize the cost functional 
 \begin{eqnarray}\label{JE1}
J(u  )&=&  \frac 1 2 \EE\bigg[\int_{0}^{T}    u_t^2 
       dt + X _T^2  \bigg].
\end{eqnarray}
This corresponds to $b(t, x, y, u)= x+\lambda y +u$, $\delta (t, x, y, u)=0$ and $\sigma (t, x, y, u) =1$ in
\eqref{e:SDE} and $f(t, x, y, u)=u^2/2$, $g(t, x, y, u)=0$  and $h(x, y)=x^2/2$ in \eqref{e:5.8}. 
Thus $H(t, x, y, u, p)=\frac{u^2}2 - (x+\lambda y + u)p$ and 
$\sH (t, x, y, u, p, q)= -q$.

\medskip

In this setting,
the adjoint equation \eqref{BSDE-p} becomes
 \begin{eqnarray}\label{pE1}
\left\{\begin{aligned}
dp_t =& -  ( p_t  +\lambda \EE p_t    )dt  +q_t dB_{L_t},  
    \\
p_T = & - X_T	 .\\
\end{aligned}
\right.
\end{eqnarray}
We look for the solution $(p_t, q_t)$ that are $\bF'$-progressively measurable.
In such a case,  $\bar  p_t = p_t$ and $\bar q_t= q_t$. 
From  Theorem \ref{Necessaryconv}, we then have
\begin{eqnarray} \label{uE1}
u_t=\bar  p_t =p_t.
\end{eqnarray}
For $t\in [0,T]$, we try a solution of (\ref{pE1}) of the form
\begin{eqnarray} \label{solyE1}
p_t=\phi (t)  X_t+ \psi (t) \EE X_t 
\end{eqnarray}
where $\phi$ and $\psi$ are deterministic functions on $[0, T]$ with 
$$
 \phi (T) =-1 \quad \hbox{ and  }   \quad \psi (T)=0.
 $$
 Naturally, differentiating  the above equation leads to
\begin{eqnarray*}
dp_t &=& X_t \phi' (t) dt+ \phi (t) d  X_t+ \psi ' (t) \EE X_t dt +\psi (t) d \EE X_t  \nonumber\\
&=&X_t \phi' (t) dt+ \phi (t)   \big(\lambda \EE X_t+  X_t  +u_t\big )dt+  \phi (t)   dB_{L_t } + \psi ' (t) \EE X_t dt  + \psi (t) \big( (\lambda +1)\EE X_t +\EE u_t \big)dt  \nonumber\\
&=&\big(X_t \phi' (t)  + \phi (t)   \big(\lambda \EE X_t+  X_t  +u_t\big ) + \psi ' (t) \EE X_t  + \psi (t) \big(   (\lambda +1)  \EE X_t +\EE u_t \big)\big) dt
 + \phi (t)   dB_{L_t }.    
\end{eqnarray*}
Comparing the above equation with (\ref{pE1}),    
we have
\begin{eqnarray} \label{e:8.5} 
 X_t \phi' (t)  + \phi (t)   \big(\lambda \EE X_t+  X_t  +u_t\big ) + \psi ' (t) \EE X_t  + \psi (t) \big(   (\lambda +1)  \EE X_t +\EE u_t \big)    =  -  ( p_t +\lambda \EE p_t    ) 
 \end{eqnarray} 
 and
$$
\phi (t)   =q_t .
$$
Using \eqref{uE1} and \eqref{solyE1} in \eqref{e:8.5},  we get 
\begin{eqnarray*}
 && X_t \phi' (t)  + \phi (t)   \big(\lambda \EE X_t+  X_t  +  \phi (t) X_t+ \psi (t) \EE X_t     \big ) + \psi ' (t) \EE X_t  + \psi (t) \big( (\lambda +1)\EE X_t + \phi (t) \EE X_t+ \psi (t) \EE X_t    \big)  \nonumber\\
 && =  -  (\phi (t) X_t+ \psi (t) \EE X_t   + \lambda \phi (t)  \EE  X_t + \lambda  \psi (t) \EE X_t   ) 
 \end{eqnarray*}
By comparing the coefficients of $X_t$ and $\EE X_t$, we obtain 
\begin{equation}\label{e:8.7}
 \phi' (t)   +2 \phi (t)   +  \phi (t) ^2 =  0  
\end{equation}
and
\begin{equation}\label{e:8.8}
2 \lambda \phi (t)   +   2 \phi (t)  \psi (t) + \psi ' (t)  + 2(\lambda +1)\psi (t) +\psi (t)^2=0 .
\end{equation}
It is easy to solve  ODE \eqref{e:8.7}  with terminal condition $\phi (T)=-1$ that 
\begin{equation*}\label{e:8.9}
\phi (t)= - 2/(1+ e^{2(t-T)}) \quad \hbox{for } t\in [0, T].
\end{equation*}
To solve ODE \eqref{e:8.8} with $\psi (T)=0$, set  $y(t):=\psi (T-t)$. Then $y$ satisfies   $y' = f(t, y)$ with 
\begin{equation}\label{e:8.10a}
f(t, y)= y^2  + 2 \left( \lambda + 1 - \frac{2}{1+e^{-2t}} \right) y -\frac{4\lambda} { 1 +e^{-2t}}
\end{equation}
 and  $y(0) = 0$.  Note that $f (t, y)$ is continuous on $\R \times \R$ and is locally Lipschitz continuous in $y$.
 Thus by the Picard-Lindel\"of theorem and the Continuation theorem, 
  there is some $T_0 =T_0(\lambda) \in (0, \infty]$, which we call the positive  explosion time,
  so that    ODE \eqref{e:8.10a} has a unique solution $y$ for $t\in [0, T_0)$ with $y(0)=0$ and it can not be extended
  beyond $T_0$.   Consequently, 
 for $T\in (0, T_0)$, ODE \eqref{e:8.8} has a unique solution $\psi$ on $[0, T]$ with $\psi (T)=0$.

\begin{prop} \label{P:8.1} 
There is some $T_0= T_0 (\lambda) \in (0, \infty]$ so that for every $T \in (0, T_0)$, 
there is a unique optimal control $\bar u\in \cU [0, T]$ for \eqref{xE1}-\eqref{JE1}.
\end{prop}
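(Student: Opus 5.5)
The plan is to build a candidate control by the feedback ansatz already set up above, show it satisfies the adjoint system, and then apply the sufficient maximum principle (Theorem \ref{Sufficiency}) for optimality. Uniqueness will come from strict convexity of $J$.

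\textbf{Step 1: the closed-loop state.} Fix $T\in(0,T_0)$, so that $\phi(t)=-2/(1+e^{2(t-T)})$ and $\psi$ (the solution of \eqref{e:8.8} with $\psi(T)=0$) are continuous and bounded on $[0,T]$. Consider the closed-loop MF-SDE
\[
dX_t=\big(\lambda\EE X_t+X_t+\phi(t)X_t+\psi(t)\EE X_t\big)\,dt+dB_{L_t},\qquad X_0=x_0 .
\]
This has the form \eqref{SDE} with deterministic coefficients that are Lipschitz in $(x,\mu)$, since $|\EE X-\EE Y|\le \sW_2$ by \eqref{e:3.2}. Hence Theorem \ref{exis-unique-SDEs} gives a unique $\bF'$-adapted solution $\bar X$ with $\EE\sup_t|\bar X_t|^2<\infty$. (Alternatively, $m(t):=\EE\bar X_t$ solves the linear ODE $m'=(\lambda+1+\phi+\psi)m$, and the equation then becomes linear in $\bar X$.)

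\textbf{Step 2: the candidate control and the adjoint pair.} Set
\[
\bar u_t:=p_t:=\phi(t)\bar X_t+\psi(t)\EE\bar X_t .
\]
Then $\bar u$ is $\bF'$-progressively measurable and square integrable, so $\bar u\in\cU'[0,T]$, and $\bar X=X^{\bar u}$. Take $q_t:=\phi(t)\1_{\{R_t=0\}}$. The indicator does not change $\int q\,dB_{L}$, and it puts $(p,q)$ in $\sM^2[0,T]$.

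Applying It\^o's formula to $p_t$ reproduces the computation leading to \eqref{e:8.5}. By \eqref{e:8.7}--\eqref{e:8.8}, the drift equals $-(p_t+\lambda\EE p_t)$. The terminal values $\phi(T)=-1$ and $\psi(T)=0$ give $p_T=-\bar X_T$. So $(p,q)$ solves \eqref{pE1}, and by the uniqueness theorem for \eqref{BSDE-p} it is \emph{the} adjoint pair associated with $\bar u$. In addition $\bar u_t=p_t$, which is exactly \eqref{uE1}.

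\textbf{Step 3: optimality via Theorem \ref{Sufficiency}.} Although $b$ and $f$ are not globally Lipschitz/bounded-derivative as in Hypothesis \ref{HP1}, the It\^o duality computation in that proof uses only the linear structure and square integrability, so I would redo it directly here. Condition (i) holds because $h(x,y)=x^2/2$ is convex.

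For condition (ii), note $\sH=-q$ does not depend on $u$ or $x$, so its contribution cancels. With $\Delta X=X^u-\bar X$ and $\Delta u=u-\bar u$,
\[
H(t,u)-H(t,\bar u)=\tfrac12(u_t^2-\bar u_t^2)-(\Delta X_t+\lambda\Delta X_t'+\Delta u_t)p_t .
\]
Also $\nabla_xH=-p_t$ and $\nabla_y\wh H=-\lambda p_t'$, and by exchanging the independent copies, $\EE\EE'[\lambda\Delta X_t'p_t]=\EE\EE'[\lambda\Delta X_t p_t']$. Hence the required inequality reduces to
\[
\EE\int_0^T\Big(\tfrac12(u_t^2-\bar u_t^2)-\Delta u_t\,p_t\Big)dt\ \ge 0 .
\]
This follows from $\tfrac12(u^2-\bar u^2)\ge\bar u\,\Delta u$ together with $\bar u=p$. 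Therefore $J(u)\ge J(\bar u)$ for all $u\in\cU'[0,T]$.

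\textbf{Step 4: uniqueness.} The map $u\mapsto X^u$ is affine, since the state equation is linear with an additive mean-field term and constant noise. Hence $u\mapsto J(u)=\tfrac12\EE[\int_0^T u_t^2\,dt+X_T^2]$ is strictly convex on the convex set $\cU'[0,T]$. If $u_1\neq u_2$ were both optimal, then $J((u_1+u_2)/2)<J(u_1)$, which is a contradiction. So $u_1=u_2$ $d\bP\times dt$-a.e.

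\textbf{Main obstacle.} I expect Step 3 to be the delicate point. The hypotheses of Theorem \ref{Sufficiency} (Hypothesis \ref{HP1}) are not literally met by $f=u^2/2$, so the duality identity $\EE[p_T\Delta X_T]=\dots$ must be justified directly. This needs the stochastic integrals $\int\Delta X\,q\,dB_L$ and $\int p\,d(\text{martingale part})$ to be true martingales. That follows from the $L^2$ sup bounds of Theorem \ref{exis-unique-SDEs} and the boundedness of $\phi$, by the same BDG argument as in \eqref{e:4.10a}. The restriction $T<T_0$ enters only to guarantee that $\psi$, and hence the feedback, is finite on $[0,T]$.
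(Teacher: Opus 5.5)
Your proposal is correct and follows essentially the same route as the paper. Both arguments use the feedback ansatz $p_t=\bar u_t=\phi(t)\bar X_t+\psi(t)\EE\bar X_t$ for the closed-loop MF-SDE, verify the adjoint equation \eqref{pE1}, and obtain optimality from the sufficient maximum principle (Theorem \ref{Sufficiency}), where the gap in condition (ii) is exactly $\frac12\EE\int_0^T(u_t-\bar u_t)^2\,dt$.

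The differences are minor. The paper gets uniqueness from the equality case of \eqref{e:7.4}/\eqref{e:8.11}, whereas you use strict convexity of $J$. The paper also applies Theorem \ref{Sufficiency} without noting, as you rightly do, that Hypothesis \ref{HP1} fails for $f=u^2/2$ and $h=x^2/2$; it does not fail for $b$, whose derivatives are constant. Finally, the paper takes $q_t=\phi(t)$, without the indicator $\1_{\{R_t=0\}}$ that you insert so that $(p,q)\in\sM^2[0,T]$.
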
 

\pf    Let $T_0 =T_0(\lambda) \in (0, \infty]$ be the positive explosion time for the solution 
of ODE \eqref{e:8.10a}  with $y(0)=0$. Let $T\in (0, T_0)$, and  
$\phi$ and $\psi$ be the continuous solutions to the ODEs \eqref{e:8.7} and \eqref{e:8.8}
with $\phi (T)=-1$ and $\psi (T)=0$.  By Theorem \ref{exis-unique-SDEs}, there is a unique solution
$X\in \cM [0, T]$ to the following  MF-SDE
\begin{eqnarray}\label{e:8.10}
d \bar X_t &=& \left(  (1+\phi (t)) \bar X_t  + (\lambda + \psi (t) ) \E \bar X_t \right) dt +   dB_{L_t}  \quad \hbox{for } t\in[0,T]
\ \hbox{ with } X_0=x_0\in \R.   
 \end{eqnarray} 
Define 
$$
\bar u_t :=\phi (t) \bar X_t + \psi (t)\E \bar X_t \in \cU' [0, T].
$$
Clearly, $\bar X$ satisfies
$$
d \bar X_t = \left(  \bar X_t  + \lambda   \E \bar X_t + \bar u_t\right) dt +   dB_{L_t}  \quad \hbox{for } t\in[0,T]
\ \hbox{ with } X_0=x_0\in \R.
$$
So we can identify $\bar X$ with $X^{\bar u}$,  the unique $L^2$ strong solution of \eqref{xE1}
with $\bar u$ in place of $u$. Define
$$
p_t =  \phi (t) \bar X_t + \psi (t)\E \bar X_t \quad \hbox{ and } \quad q_t = \phi (t).
$$
Note that $p$ is $\bF'$-progressively measurable and $q$ is deterministic. 
So $\bar p_t :=\E [ p_t | \sF'_t] =p_t$ and $\bar q_t := \E [ q_t | \sF'_t] =q_t$. 
By the calculations that led to \eqref{e:8.7}-\eqref{e:8.8}, we know that $(p, q)$ satisfies 
the adjoint equation \eqref{pE1} with $\bar X_T$ in place of $X_T$ there.
In terms of the notation in \eqref{e:H*}, we have
\begin{eqnarray*}
H^* (t, u) &=&  \frac{u_t^2}2 - (X^u_t  + \lambda (X^u_t)' + u_t) \bar u_t -  \1_{\{R_t=0\}} \kappa^{-1}  q_t,   \\
 \wh H^* (t, u) &=& \frac{((u_t)')^2}2 - ( (X^u_t)'  + \lambda X^u_t + (u_t)') (\bar u_t)' - \1_{\{R_t=0\}} \kappa^{-1}  q_t . 
\end{eqnarray*}
Thus 
\begin{eqnarray} \label{e:8.11}
 && \E  \int_0^T \E^\prime \left[ H ^* (t, u)- H^* (t, \bar u) \right] dt  \nonumber  \\
 &=&  \E  \int_0^T  \left( \frac{u_t^2-\bar u_t^2}2 - (X^u_t -X^{\bar u}_t  + \lambda \E [ X^u_t - X^{\bar u}_t] + u_t-\bar u_t) \bar u_t  \right) dt   \nonumber  \\
 &=&  \E  \int_0^T  \left( \frac{ (u_t-\bar u_t)^2}2 - (X^u_t -X^{\bar u}_t  + \lambda \E [ X^u_t - X^{\bar u}_t] ) \bar u_t  \right) dt    \nonumber \\
&=&  \E  \int_0^T  \left( \frac{ (u_t-\bar u_t)^2}2 - (X^u_t -X^{\bar u}_t ) (1+ \lambda \E [ \bar u_t)   \right) dt   \nonumber  \\
&=& \frac12 \E  \int_0^T  (u_t-\bar u_t)^2dt  
 + \E \Big [ \int_0^T (X^u_t - X^{\bar u}_t)  \cdot  \E' \big[ \nabla_x H^*  (t, \bar u) +\nabla_y \wh H^* (t, \bar u)   \big] dt 
  \nonumber \\
&\geq& \E \Big [ \int_0^T (X^u_t - X^{\bar u}_t)  \cdot  \E' \big[ \nabla_x H^*  (t, \bar u) +\nabla_y \wh H^* (t, \bar u)   \big] dt   \Big] ,
 \end{eqnarray}
 with the equality holds if and only if $u=\bar u$ in $\cU' [0, T]$. 
 Clearly, $h(x, y)=x^2$ is a convex function on $\R\times \R$. Thus by Theorem \ref{Sufficiency},
 $\bar u$ is an optimal control for for \eqref{xE1}-\eqref{JE1}.
 If $u\in \cU'[0, T]$ is another control so that $J(u)=J(\bar u)$, then we have by \eqref{e:7.4}
 that the inequality in \eqref{e:8.11} has to be an equality, that is,   $u$ has to be the same as $\bar u$.
 This establishes the proposition. \qed 
 
 \medskip

  \begin{remark}\rm 
  ODE \eqref{e:8.10a} with initial value $y(0)=0$ is a Riccati equation.
   A more elaborated analysis reveals that its  positive explosion time 
 $T_0 =T_0(\lambda)$ is in fact  infinite for every $\lambda \in \R$.  
 Thus  Proposition \ref{P:8.1} in fact holds with  $T_0= \infty$.  
    But we will not delve into the details here. \qed
  \end{remark}

 \vskip 0.3truein
 
 {\small 
{\bf Shuaiqi Zhang}

\smallskip

     School of Mathematics, China  University of Mining and Technology,    
  Xuzhou,   Jiangsu, 221116, China.  
  
  \smallskip

        Email: \texttt{shuaiqiz@hotmail.com}

\bigskip
	
{\bf Zhen-Qing Chen}

\smallskip

    Department of Mathematics, University of Washington, Seattle,
WA 98195, USA.  

\smallskip

    Email:  \texttt{zqchen@uw.edu}

}
 

\begin{thebibliography}{99}

 \small
   

 
 
 \bibitem {Acciaio}
 B. Acciaio, J. Backhoff-Veraguas and R. Carmona,
  Extended mean-field control problems: stochastic maximum principle and transport perspective, 
  {\it SIAM J. Control Optim. \bf 57}(6) (2019), 3666–3693.
   

\bibitem  {Buckdamn2009a}  
R. Buckdahn, B. Djehiche, J. Li and S. G. Peng.
 Mean-field backward stochastic differential
equations:a limit approach,  {\it  Ann. Probab. \bf 37} (1978), 1524–1565.


\bibitem{Buckdahn2016} 
 R. Buckdahn, J. Li and J. Ma.
  A stochastic maximum principle for general mean-field systems, 
  {\it Appl. Math. Optim. \bf 74} (2016), 507–534.


\bibitem{Buckdahn2009b} 
  R. Buckdahn, J. Li and S. G. Peng.
   Mean-field backward stochastic differential equations
and related partial differential equations, {\it Stoch. Proc. Appl.  \bf 119} (2009), 3081-3834.   
  
 \bibitem{Kac2007}
 M. Kac.
{\it   Foundations of Kinetic Theory,} University of California Press, California, 1956.
 
  
\bibitem{McKean1966}
  H.   McKean.
  A class of Markov processes associated with nonlinear parabolic equations,
{\it Proc. Natl. Acad. Sci. USA. \bf 56}(6) (1966), 1907-1911.
 
 
 
   \bibitem{Meerschaert2004}
 M.  M. Meerschaert and   H.-P. Scheffler.
 Limit theorems for continuous-time random walks with infinite mean waiting times
   {\it J. Appl.  Probab. \bf  41}   (2004),   623-638 
 
 \bibitem{MK}  R. Metzler and J. Klafter. 
  The random walk's guide to anomalous diffusion: A fractional dynamics approach.
  {\it  Phys. Rep. \bf 399(1)} (2002), 1-77. 
 
 
  
\bibitem{Lakhdari2021}
I. E. Lakhdari.
 H. Miloudi and M. Hafayed, Stochastic maximum principle for partially
observed optimal control problems of general McKean–Vlasov differential equations, 
{\it B. Iran. Math. Soc.  \bf 47} (2021), 1021-1043.
 
 
 \bibitem{Lasry2007}
J. M. Lasry and P. L. Lions.
 Mean-field games, 
 {\it Jpn. J. Math. \bf 2} (2007) 229–260.


\bibitem{Li2012} J. Li. 
Stochastic maximum principle in the mean-field controls, 
{\it Automatica \bf 48}(2) (2012), 366-373.
 
 
\bibitem {Wang2022}
  G. C. Wang and Z. Wu.
  A maximum principle for mean-field stochastic control system with
noisy observation, {\it Automatica \bf 137} (2022), 110135.
     
  

\bibitem{ZhangChen2024SICON1} 
 S. Zhang and Z.-Q. Chen.
  Stochastic maximum principle for subdiffusions and its applications, 
  {\it SIAM J. Control Optim. \bf 62} (2024), pp. 953–981. 
 
  
  \bibitem{ZhangChen2024JDE}
 S. Zhang and Z.-Q. Chen.
  Fully coupled forward-backward stochastic differential equations driven by sub-diffusions, 
 {\it J. Differential Equations  \bf 405} (2024), pp. 337–358.  
 
  \bibitem{ZhangChen2024SICON2}
S. Zhang and Z.-Q. Chen.
Stochastic maximum principle for fully coupled forward-backward stochastic differential equations driven by sub-diffusion. 
 { \it SIAM J. Control Optim. \bf 62} (2024), pp. 2433–2455.  

 
 \bibitem{ZC4} 
S. Zhang and Z.-Q. Chen,
 Errata to  {\it Stochastic maximum principle   for sub-diffusions  and its applications}. Preprint 2026.
   


   
\bibitem{Zhang2018} 
  X. Zhang, Z. Y. Sun and J. Xiong.
  A general stochastic maximum principle for a Markov
regime switching jump-diffusion model of mean-field type, 
{\it SIAM J. Control Optim. \bf  56}(4), (2018), 2563-2592.

 

 \end{thebibliography}
\end{document}